\DeclareSymbolFontAlphabet{\mathbbl}{bbold}
\DeclareSymbolFontAlphabet{\mathbb}{AMSb}
 \newtheorem{thm}{Theorem}[section]
 \newtheorem{prop}[thm]{Proposition}
\theoremstyle{definition}
 \theoremstyle{remark}
 \newtheorem{rem}[thm]{Remark}
\newcommand{\supp}{\mathop{\mathrm{supp}}}
\DeclareRobustCommand\widecheck[1]{{\mathpalette\@widecheck{#1}}}
\def\@widecheck#1#2{%
    \setbox\z@\hbox{\m@th$#1#2$}%
    \setbox\tw@\hbox{\m@th$#1%
       \widehat{%
          \vrule\@width\z@\@height\ht\z@
          \vrule\@height\z@\@width\wd\z@}$}%
    \dp\tw@-\ht\z@
    \@tempdima\ht\z@ \advance\@tempdima2\ht\tw@ \divide\@tempdima\thr@@
    \setbox\tw@\hbox{%
       \raise\@tempdima\hbox{\scalebox{1}[-1]{\lower\@tempdima\box
\tw@}}}%
    {\ooalign{\box\tw@ \cr \box\z@}}}
\numberwithin{equation}{section}
\begin{document}

\title{Higher order Riesz transforms in the inverse Gaussian setting and UMD Banach spaces}

\author[J. J. Betancor]{Jorge J. Betancor}
\address{Jorge J. Betancor, Lourdes Rodr\'{\i}guez-Mesa\newline
Departamento de An\'alisis Matem\'atico, Universidad de La Laguna,\newline
Campus de Anchieta, Avda. Astrof\'{\i}sico S\'anchez, s/n,\newline
38721 La Laguna (Sta. Cruz de Tenerife), Spain}
\email{jbetanco@ull.es, lrguez@ull.edu.es}

\author[L. Rodr\'{\i}guez-Mesa]{Lourdes Rodr\'{\i}guez-Mesa}

\thanks{This paper is partially supported by PID2019-106093GB-I00.}

\subjclass[2020]{42B20, 42B25, 42B15, 47B90}

\keywords{Inverse Gaussian measure, higher Riesz transforms, UMD Banach spaces.}


\begin{abstract}
In this paper we study higher order Riesz transforms associated with the inverse Gaussian measure given by $\pi ^{n/2}e^{|x|^2}dx$ on $\mathbb{R}^n$. We establish $L^p(\mathbb{R}^n, e^{|x|^2 }dx)$-boundedness properties and obtain representations as principal values singular integrals for the higher order Riesz transforms. New characterizations of the Banach spaces having the UMD property by means of the Riesz transforms and imaginary powers of the operator involved in the inverse Gaussian setting are given. 
\end{abstract}

\maketitle

\section{Introduction}
Our setting is $\mathbb{R}^n$ endowed with the measure $\gamma_{-1}$ whose density with respect to the Lebesgue measure is $\pi ^{n/2}e^{|x|^2}$, $x\in \mathbb{R}^n$. The measure $\gamma_{-1}$ is called the inverse Gaussian measure. The study of harmonic analysis operators in $(\mathbb{R}^n,\gamma _{-1})$ was began by Salogni (\cite{Sa}). The principal motivation for the Salogni's studies was the connection with the Gaussian setting. However, as Bruno and Sj\"ogren (\cite{BrSj}) pointed out, $(\mathbb{R}^n,\gamma _{-1})$ can be seen as a model of a variety of settings where a theory of singular integrals has not been developed. Also, the natural Laplacian on $(\mathbb{R}^n,\gamma _{-1})$, that we will denote by $\mathcal{A}$, can be interpreted as a restriction of the Laplace-Beltrami operator associated with a warped-product manifold whose Ricci tensor is unbounded from below. A complete exposition of the theory of this kind of manifolds can be found in \cite{Chen}.

The aim of this paper is to study $L^p(\mathbb{R}^n,\gamma _{-1})$-boundedness properties of higher order Riesz transforms in the inverse Gaussian setting. Also, we characterize the UMD Banach spaces by using these Riesz transforms.

We consider the second order differential operator $\mathcal{A}_0$ defined by
$$
\mathcal{A}_0f(x)=-\frac{1}{2}\Delta f(x)-x\cdot \nabla f(x),\quad x\in \mathbb{R}^n,
$$
where $f\in C_c^\infty(\mathbb{R}^n)$, the space of the smooth functions with compact support in $\mathbb{R}^n$. Here, $\Delta$ and $\nabla$ denote the usual Euclidean Laplacian and gradient, respectively.

$\mathcal{A}_0$ is essentially selfadjoint in $L^2(\mathbb{R}^n,\gamma _{-1})$. $\mathcal{A}$ denotes the closure of $\mathcal{A}_0$ in $L^2(\mathbb{R}^n,\gamma _{-1})$. 

For every $k=(k_1,...,k_n)\in \mathbb{N}^n$ by $H_k$ we represent the $k$-th Hermite polynomial given by $H_k(x)=\prod_{i=1}^nH_{k_i}(x_i)$, $x=(x_1,...,x_n)\in \mathbb{R}^n$, where, for every $m\in \mathbb{N}$,
$$
H_m(z)=(-1)^me^{z^2}\frac{d^m}{dz^m}e^{-z^2},\quad z\in \mathbb{R}.
$$
We have that, for every $k=(k_1,...,k_n)\in \mathbb{N}^n$,
$$
\mathcal{A}\widetilde{H}_k=(|k|+n)\widetilde{H}_k,
$$
where $|k|=k_1+k_2+...+k_n$ and $\widetilde{H}_k(x)=e^{-|x|^2}H_k(x)$, $x\in \mathbb{R}^n$. The spectrum of $\mathcal{A}$ in $L^2(\mathbb{R}^n,\gamma _{-1})$ is the discrete set $ \{n+m\}_{m\in \mathbb{N}}$.

The operator $-\mathcal{A}$ generates a diffusion semigroup (in the Stein sense \cite{StLP}) $\{T_t^\mathcal{A}\}_{t>0}$ in $(\mathbb{R}^n,\gamma _{-1})$ where, for every $t>0$, we have that
$$
T_t^\mathcal{A}(f)(x)=\int_{\mathbb{R}^n}T_t^\mathcal{A}(x,y)f(y)dy,\quad x\in \mathbb{R}^n,
$$
for every $f\in L^p(\mathbb{R}^n,\gamma _{-1})$, $1\leq p<\infty$, and being
$$
T_t^\mathcal{A}(x,y)=\frac{e^{-nt}}{\pi ^{n/2}(1-e^{-2t})^{n/2}}\exp\left(-\frac{|x-e^{-t}y|^2}{1-e^{-2t}}\right),\quad x,y\in \mathbb{R}^n,\;t>0.
$$

The maximal operator $T_*^\mathcal{A}$ defined by
$$
T_*^\mathcal{A}f=\sup_{t>0}|T_t^\mathcal{A}f|,
$$
was studied by Salogni (\cite{Sa}). She proved that $T_*^\mathcal{A}$ is bounded from $L^1(\mathbb{R}^n,\gamma _{-1})$ into $L^{1,\infty}(\mathbb{R}^n,\gamma _{-1})$. From the general results in \cite{StLP} it can be deduced that $T_*^\mathcal{A}$ is bounded from $L^p(\mathbb{R}^n,\gamma _{-1})$ into itself, for every $1<p<\infty$. Recently, Betancor, Castro and de Le\'on-Contreras \cite{BCdL} have characterized the K\"oethe function spaces with the Hardy-Littlewood property by using the maximal operators
$$
T_{*,k}^\mathcal{A}f=\sup_{t>0}|t^k\partial _t^kT_tf|,\quad k\in \mathbb{N}.
$$

In \cite{Sa} $L^p(\mathbb{R}^n,\gamma _{-1})$-boundedness properties with $1<p<\infty$ for some spectral multipliers associated with the operator $\mathcal{A}$ were proved. The imaginary power $\mathcal{A}^{i\gamma}$, $\gamma \in \mathbb{R}\setminus\{0\}$, of $\mathcal{A}$ is a special case of the multipliers studied in \cite{Sa}. Bruno (\cite{Br}) established endpoints results for $\mathcal{A}^{i\gamma}$, $\gamma \in \mathbb{R}\setminus\{0\}$, proving that $\mathcal{A}^{i\gamma }$ is bounded from $L^1(\mathbb{R}^n,\gamma _{-1})$ into $L^{1,\infty}(\mathbb{R}^n,\gamma _{-1})$. Also, he showed that, for $\lambda \geq 1$, the shifted first order Riesz transform $\nabla (\mathcal{A}+\lambda I)^{-1/2}$ is bounded from $L^1(\mathbb{R}^n,\gamma _{-1})$ into $L^{1,\infty}(\mathbb{R}^n,\gamma _{-1})$. These operators are studied on new Hardy type $H^1$-spaces.

Higher order Riesz transforms associated with the operator $\mathcal{A}$ were studied by Bruno and Sj\"ogren \cite{BrSj}. For every $\alpha =(\alpha _1,...,\alpha _n)\in \mathbb{N}^n\setminus\{0\}$ the $\alpha$-th Riesz transform is defined by $R_\alpha =\partial ^\alpha \mathcal{A}^{-|\alpha |/2}$, where $\partial ^\alpha =\frac{\partial ^{|\alpha|}}{\partial x_1^{\alpha _1}...\partial x_n^{\alpha _n}}$ and $|\alpha|=\alpha _1+...+\alpha _n$. In \cite[Theorem 1.1]{BrSj} it was established that $R_\alpha$ is bounded from $L^1(\mathbb{R}^n,\gamma _{-1})$ into $L^{1,\infty}(\mathbb{R}^n,\gamma _{-1})$ if and only if $|\alpha|\leq 2$.

In \cite[Remark 2.6]{Br} Bruno proved that, for every $\alpha \in \mathbb{N}^n$ with $|\alpha |=1$, $R_\alpha$ is bounded from $L^p(\mathbb{R}^n,\gamma _{-1})$ into itself, for every $1<p<\infty$. In \cite{BrSj} Bruno and Sj\"ogren say that they do not know whether $R_\alpha$ is bounded from $L^p(\mathbb{R}^n,\gamma _{-1})$ into itself for every $1<p<\infty$ and $\alpha \in \mathbb{N}^n$, $|\alpha |>1$, though they expect so. In our first results we prove that, as they expected, $R_\alpha$ is bounded from $L^p(\mathbb{R}^n,\gamma _{-1})$ into itself when $1<p<\infty$ and $\alpha \in \mathbb{N}^n\setminus\{0\}$. We also obtain a representation of $R_\alpha$ as a principal value singular integral. In order to prove our result we need to use some properties of the negative power $\mathcal{A}^{-\beta}$, $\beta >0$, of $\mathcal{A}$. In Section 2 we analyze $\mathcal{A}^{-\beta}$, $\beta >0$. We obtain that, for every $\beta >0$, the operator $\mathcal{A}^{-\beta}$ is bounded from $L^1(\mathbb{R}^n,\gamma _{-1})$ into $L^{1,\infty}(\mathbb{R}^n,\gamma _{-1})$. This result contrasts with the one in \cite[Proposition 6.2]{GCMST2} where it is proved that $\mathcal{L}^{-\beta}$, $\beta >0$, is not bounded from $L^1(\mathbb{R}^n,\gamma _1)$ into $L^{1,\infty}(\mathbb{R}^n,\gamma _1)$, where $\mathcal{L}$ represents the Ornstein-Uhlenbeck operator and $\gamma _1$ denotes the Gaussian measure ($d\gamma _1(x)=\pi ^{-n/2}e^{-|x|^2}dx$) on $\mathbb{R}^n$.
\begin{thm}\label{Th1.1}
Let $\alpha =(\alpha _1,...,\alpha _n)\in \mathbb{N}^n\setminus \{0\}$. For every $f\in C_c^\infty (\mathbb{R}^n)$, the derivative $\partial ^{\alpha}\mathcal{A}^{-|\alpha|/2}(f)(x)$ exists for almost all $x\in \mathbb{R}^n$ and there exists $c_\alpha \in \mathbb{R}$ such that
$$
\partial_x ^{\alpha}\mathcal{A}^{-|\alpha|/2}(f)(x)=\lim_{\varepsilon \rightarrow 0^+}\int_{|x-y|>\varepsilon}R_\alpha (x,y)f(y)dy+c_\alpha f(x),\quad \mbox{for almost all }x\in \mathbb{R}^n,
$$
where $c_\alpha =0$ if $\alpha _i$ is odd for some $i=1,...,n$.

Furthermore, when $n=1$ and $\alpha$ is even, the last integral is actually absolutely convergent for every $x\in \mathbb{R}$ and in this case no principal value is needed.
Here 
$$
R_\alpha (x,y)=\frac{1}{\Gamma (\frac{|\alpha|}{2})}\int_0^\infty   \partial _x^{\alpha}T_t^\mathcal{A}(x,y)t^{\frac{|\alpha |}{2}-1}dt,\quad x,y\in \mathbb{R}^n,\;x\not=y.
$$
\end{thm}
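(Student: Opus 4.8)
The plan is to start from subordination. Since the $L^2(\mathbb{R}^n,\gamma_{-1})$-spectrum of $\mathcal{A}$ lies in $[n,\infty)$, for $\beta=|\alpha|/2>0$ the spectral theorem gives $\mathcal{A}^{-\beta}=\frac{1}{\Gamma(\beta)}\int_0^\infty T_t^{\mathcal{A}}\,t^{\beta-1}\,dt$, and by the results of Section 2 this representation holds pointwise on $L^p(\mathbb{R}^n,\gamma_{-1})$; hence for $f\in C_c^\infty(\mathbb{R}^n)$, $\mathcal{A}^{-|\alpha|/2}f(x)=\frac{1}{\Gamma(|\alpha|/2)}\int_0^\infty T_t^{\mathcal{A}}f(x)\,t^{|\alpha|/2-1}\,dt$. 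The first step is to differentiate this $|\alpha|$ times under the integral sign. Differentiating the Mehler-type kernel yields the closed form
$$\partial_x^\alpha T_t^{\mathcal{A}}(x,y)=\frac{(-1)^{|\alpha|}e^{-nt}}{\pi^{n/2}(1-e^{-2t})^{(n+|\alpha|)/2}}\,H_\alpha\!\left(\frac{x-e^{-t}y}{\sqrt{1-e^{-2t}}}\right)\exp\!\left(-\frac{|x-e^{-t}y|^2}{1-e^{-2t}}\right),$$
which identifies $R_\alpha(x,y)$ as in the statement, as well as the relation $\partial_{x_i}T_t^{\mathcal{A}}(x,y)=-e^{t}\partial_{y_i}T_t^{\mathcal{A}}(x,y)$; integrating by parts against $f$ and iterating, $\partial_x^\gamma T_t^{\mathcal{A}}f=e^{|\gamma|t}T_t^{\mathcal{A}}(\partial^\gamma f)$ for $\gamma\le\alpha$. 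As $T_t^{\mathcal{A}}$ is a positive $L^\infty$-contraction, this gives $|\partial_x^\gamma T_t^{\mathcal{A}}f(x)|\le e^{|\gamma|t}\|\partial^\gamma f\|_\infty$, which bounds the integrand for $t\le1$; together with the kernel estimates below for $t\ge1$, the integral $\int_0^\infty\partial_x^\gamma T_t^{\mathcal{A}}f(x)\,t^{|\alpha|/2-1}\,dt$ converges locally uniformly in $x$ for every $\gamma\le\alpha$, so $\mathcal{A}^{-|\alpha|/2}f\in C^{|\alpha|}(\mathbb{R}^n)$ and $\partial_x^\alpha\mathcal{A}^{-|\alpha|/2}f(x)=\frac{1}{\Gamma(|\alpha|/2)}\int_0^\infty\partial_x^\alpha T_t^{\mathcal{A}}f(x)\,t^{|\alpha|/2-1}\,dt$ for every $x$.

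Next I would establish the kernel estimates. With $r=1-e^{-2t}$ and $|H_\alpha(w)|\lesssim(1+|w|)^{|\alpha|}$ absorbed into the Gaussian, $|\partial_x^\alpha T_t^{\mathcal{A}}(x,y)|\lesssim r^{-(n+|\alpha|)/2}e^{-c|x-e^{-t}y|^2/r}$ for $0<t\le1$ and $|\partial_x^\alpha T_t^{\mathcal{A}}(x,y)|\lesssim e^{-nt}(1+|x-e^{-t}y|)^{|\alpha|}e^{-c|x-e^{-t}y|^2}$ for $t\ge1$. For $y$ in the (compact) support of $f$ one has $r\asymp t$ when $t\le1$ and, controlling $|x-e^{-t}y|$ in terms of $|x-y|$ through $|x-e^{-t}y|\ge|x-y|-t|y|$, a splitting of the $t$-integral at $t\sim|x-y|$ gives $\int_0^\infty|\partial_x^\alpha T_t^{\mathcal{A}}(x,y)|\,t^{|\alpha|/2-1}\,dt\lesssim|x-y|^{-n}$ for $|x-y|\le1$, with Gaussian decay in $|x|$ once $x$ stays away from $\supp f$; in particular $R_\alpha(x,y)$ is given by an absolutely convergent integral for $x\ne y$. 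Finally, from $T_t^{\mathcal{A}}1=1$ I record the cancellation $\int_{\mathbb{R}^n}\partial_x^\alpha T_t^{\mathcal{A}}(x,y)\,dy=0$ for all $x$ and $t>0$.

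The principal value representation is then obtained by writing, for $\varepsilon>0$,
$$\partial_x^\alpha\mathcal{A}^{-|\alpha|/2}f(x)=\frac{1}{\Gamma(|\alpha|/2)}\int_0^\infty\left(\int_{|x-y|>\varepsilon}+\int_{|x-y|\le\varepsilon}\right)\partial_x^\alpha T_t^{\mathcal{A}}(x,y)f(y)\,dy\;t^{|\alpha|/2-1}\,dt .$$
By Fubini (justified by the estimates above) the first part equals $\int_{|x-y|>\varepsilon}R_\alpha(x,y)f(y)\,dy$. In the second part the cancellation gives $\int_{|x-y|\le\varepsilon}\partial_x^\alpha T_t^{\mathcal{A}}(x,y)f(y)\,dy=\int_{|x-y|\le\varepsilon}\partial_x^\alpha T_t^{\mathcal{A}}(x,y)(f(y)-f(x))\,dy+f(x)\int_{|x-y|\le\varepsilon}\partial_x^\alpha T_t^{\mathcal{A}}(x,y)\,dy$; after integrating in $t$, the first summand is bounded by $C\|\nabla f\|_\infty\int_{|x-y|\le\varepsilon}|x-y|^{1-n}\,dy\lesssim\varepsilon\to0$. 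For the second summand, the change of variables $u=(x-e^{-t}y)/\sqrt r$ turns $\int_{|x-y|\le\varepsilon}\partial_x^\alpha T_t^{\mathcal{A}}(x,y)\,dy$ into $\frac{(-1)^{|\alpha|}}{\pi^{n/2}r^{|\alpha|/2}}\int_{\{\,|e^{t}\sqrt r\,u-(e^{t}-1)x|\le\varepsilon\,\}}H_\alpha(u)e^{-|u|^2}\,du$; carrying out the $t$-integral, the contribution of $t\ge\eta$ is $O_\eta(\varepsilon^n)$, while on $t<\eta$ the rescaling $t=\varepsilon^2\sigma$ and dominated convergence give, as $\varepsilon\to0$,
$$f(x)\cdot c_\alpha ,\qquad c_\alpha=\frac{(-1)^{|\alpha|}}{2^{|\alpha|/2}\,\Gamma(|\alpha|/2)\,\pi^{n/2}}\int_0^\infty\frac{1}{\sigma}\left(\int_{|u|\le(2\sigma)^{-1/2}}H_\alpha(u)e^{-|u|^2}\,du\right)d\sigma ,$$
a finite constant independent of $x$. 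If some $\alpha_i$ is odd, then $H_\alpha(u)e^{-|u|^2}$ is odd in $u_i$ while $\{|u|\le(2\sigma)^{-1/2}\}$ is symmetric, so $c_\alpha=0$. Since the left-hand side does not depend on $\varepsilon$, letting $\varepsilon\to0$ gives the asserted identity.

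Finally, for $n=1$ and $\alpha$ even I would analyse the diagonal behaviour of $R_\alpha(x,y)$ by rescaling $t=|x-y|^2\tau$ in the defining integral: the leading term is $|x-y|^{-1}$ times a constant proportional to $\int_0^\infty\tau^{-3/2}H_\alpha\big((2\tau)^{-1/2}\big)e^{-1/(2\tau)}\,d\tau$, which after the substitution $w=(2\tau)^{-1/2}$ is a positive multiple of $\int_{\mathbb{R}}H_\alpha(w)e^{-w^2}\,dw$, and the latter vanishes by orthogonality of the Hermite polynomials since $\alpha$ is even and nonzero; the remaining terms are $O(1)$ near the diagonal. Hence $R_\alpha(x,\cdot)$ is locally integrable, $\int_{\mathbb{R}}R_\alpha(x,y)f(y)\,dy$ is absolutely convergent, and the limit in the statement is an ordinary integral. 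I expect the principal difficulty to be twofold: obtaining the kernel bounds with sufficient uniformity in the non-doubling inverse Gaussian geometry — the discrepancy between $|x-e^{-t}y|$ and $|x-y|$ forces the delicate local/global decomposition — and the exact evaluation of $c_\alpha$, where the coupled limit $\varepsilon\to0$, $t\to0$ together with the non-self-adjoint kernel $T_t^{\mathcal{A}}(x,y)$ make the change of variables and the identification of the limiting integral the genuinely subtle point.
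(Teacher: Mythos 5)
Your proposal is correct in substance but follows a genuinely different route from the paper's proof. The paper passes only $|\alpha|-1$ derivatives under the subordination integral (via kernel estimates and a differentiation lemma), and then splits $T_t^{\mathcal{A}}=(T_t^{\mathcal{A}}-W_t)+W_t$: the difference kernel has the integrable singularity $\sqrt{1+|x|}\,|x-y|^{\frac12-n}$, so the last derivative goes inside there, while for the Euclidean part the last derivative is computed by integration by parts on $\{|x-y|>\varepsilon\}$, the boundary integral over the sphere producing $c_\alpha$ through the parity and orthogonality of the Hermite functions (with a separate subtraction of $(\tfrac{d^{\alpha-1}}{dx^{\alpha-1}}W_t)(0)\chi_{(1,\infty)}(t)$ when $n=1$). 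You instead exploit the commutation relation $\partial_x^{\gamma}T_t^{\mathcal{A}}=e^{|\gamma|t}T_t^{\mathcal{A}}\partial^{\gamma}$, which transfers all derivatives onto $f$ and lets you put all $|\alpha|$ derivatives under the $t$-integral at once (bounded by $e^{|\gamma|t}\|\partial^{\gamma}f\|_\infty$ for $t\le 1$ and by $Ce^{-nt}\|f\|_{L^1}$ for $t\ge 1$); the principal value and the constant then come from truncating the $y$-integral at radius $\varepsilon$, using $|f(y)-f(x)|\lesssim|x-y|$ against the bound $\int_0^\infty|\partial_x^\alpha T_t^{\mathcal{A}}(x,y)|t^{\frac{|\alpha|}{2}-1}dt\lesssim_x|x-y|^{-n}$, and evaluating $f(x)\int_{|x-y|\le\varepsilon}\partial_x^\alpha T_t^{\mathcal{A}}(x,y)dy$ by the parabolic rescaling $t=\varepsilon^2\sigma$; finiteness of $c_\alpha$ rests on $\int_{\mathbb{R}^n}H_\alpha(u)e^{-|u|^2}du=0$ and its vanishing on parity, the same cancellation the paper uses. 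Your route is arguably more streamlined: it avoids the case distinctions ($\ell=0$ versus $\ell\neq 0$, $n=1$ versus $n>1$) and the spherical-shell integration by parts, and it yields the identity at every point. What the paper's comparison with $W_t$ buys is the quantitative estimate $|R_\alpha(x,y)-\mathbb{R}_\alpha(x,y)|\le C\sqrt{1+|x|}\,|x-y|^{\frac12-n}$ on the local region, which is reused heavily in Theorems \ref{Th1.2} and \ref{Th1.4}--\ref{Th1.6}; your proof of Theorem \ref{Th1.1} would not by itself supply that input.

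Two points need tightening but are not gaps. First, in the limit after the substitution $t=\varepsilon^2\sigma$ you should exhibit a dominating function uniform in $\varepsilon$: the inner integral is $O(\min(1,\sigma^{-n/2}))$ for large $\sigma$ and $O(e^{-c/\sigma})$ for small $\sigma$, the latter because the centre of the rescaled ball is $O(\varepsilon\sqrt{\sigma}\,|x|)$ while its radius is $\gtrsim \sigma^{-1/2}$, so the missing mass is a Gaussian tail. Second, in the case $n=1$, $\alpha$ even, the remainder after removing the leading $|x-y|^{-1}$ term is not $O(1)$: the piece comparing $\widetilde{H}_\alpha\big((x-e^{-t}y)/\sqrt{1-e^{-2t}}\big)$ with $\widetilde{H}_\alpha\big((x-y)/\sqrt{2t}\big)$ contributes about $(1+|x|)\int_0^{m(x)}t^{-1}e^{-c|x-y|^2/t}dt$, i.e.\ a logarithmic singularity (the paper bounds it by $\sqrt{1+|x|}\,|x-y|^{-1/2}$); this is still locally integrable in dimension one, so your conclusion of absolute convergence stands.
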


Let $\alpha \in \mathbb{N}^n$. Since, for every $\ell \in \mathbb{N}$,
\begin{equation}\label{derivH}
\frac{d}{dz}\widetilde{H}_\ell (z)=-\widetilde{H}_{\ell +1}(z),\quad z\in \mathbb{R},
\end{equation}
we have that, for every $k\in \mathbb{N}^n$,
$$
\partial ^{\alpha}_x\mathcal{A}^{-|\alpha|/2}(\widetilde{H}_k)(x)=\frac{(-1)^{|\alpha|}}{(|k|+n)^{\frac{|\alpha|}{2}}}\widetilde{H}_{k+\alpha}(x),\quad x\in \mathbb{R}^n.
$$
Let $f\in L^2(\mathbb{R}^n,\gamma _{-1})$. We can write $f=\sum_{k\in \mathbb{N}^n}c_k(f)\widetilde{H}_k$ in $L^2(\mathbb{R}^n,\gamma _{-1})$, where, for every $k\in \mathbb{N}^n$,
$$
c_k(f)=\frac{\pi ^{\frac{n}{2}}}{\|\widetilde{H}_k\|_{L^2(\mathbb{R}^n,\gamma _{-1})}^2}\int_{\mathbb{R}^n}f(y)\widetilde{H}_k(y)e^{|y|^2}dy.
$$

We define 
$$
R_\alpha f=(-1)^{|\alpha|}\sum_{k\in \mathbb{N}^n}\frac{c_k(f)}{(|k|+n)^{\frac{|\alpha|}{2}}}\widetilde{H}_{k+\alpha}.
$$
For every $k=(k_1,...,k_n)\in \mathbb{N}^n$,
\begin{equation}\label{norma2}
\|\widetilde{H}_{k+\alpha}\|_{L^2(\mathbb{R}^n,\gamma _{-1})}^2=\int_{\mathbb{R}^n}(H_{k+\alpha }(y))^2e^{-|y|^2}dy=\pi ^n 2^{|\alpha|+|k|}\prod_{i=1}^n\Gamma (k_i+\alpha _i+1).
\end{equation}
Then
\begin{align*}
\|R_\alpha f\|_{L^2(\mathbb{R}^n,\gamma _{-1})}^2&=\sum_{k\in \mathbb{N}^n}\frac{|c_k(f)|^2\|\widetilde{H}_{k+\alpha}\|_{L^2(\mathbb{R}^n, \gamma _{-1})}^2}{(|k|+n)^{|\alpha |}}\\
&=\sum_{k\in \mathbb{N}^n}(c_k(f)\|\widetilde{H}_{k}\|_{L^2(\mathbb{R}^n, \gamma _{-1})})^2\frac{\|\widetilde{H}_{k+\alpha}\|_{L^2(\mathbb{R}^n, \gamma _{-1})}^2}{(|k|+n)^{|\alpha |}\|\widetilde{H}_{k}\|_{L^2(\mathbb{R}^n, \gamma _{-1})}^2}\\
&\leq C\sum_{k\in \mathbb{N}^n}(c_k(f)\|\widetilde{H}_{k}\|_{L^2(\mathbb{R}^n, \gamma _{-1})})^2=C\|f\|_{L^2(\mathbb{R}^n,\gamma_{-1})}^2.
\end{align*}
Hence $R_\alpha $ is bounded from $L^2(\mathbb{R}^n,\gamma_{-1})$ into itself.

If $f\in C_c^\infty (\mathbb{R}^n)$, then $R_\alpha (f)(x)=\partial ^{\alpha}_x\mathcal{A}^{-|\alpha|/2}(f)(x)$, $x\in \mathbb{R}^n$.

\begin{thm}\label{Th1.2}
Let $\alpha =(\alpha _1,...,\alpha _n)\in \mathbb{N}^n\setminus\{0\}$ and $1<p<\infty$. The Riesz transform $R_\alpha$ can be extended from $L^2(\mathbb{R}^n,\gamma_{-1})\cap L^p(\mathbb{R}^n,\gamma_{-1})$ to $L^p(\mathbb{R}^n,\gamma_{-1})$ as a bounded operator from $L^p(\mathbb{R}^n,\gamma_{-1})$ into itself. By denoting again $R_\alpha$ to this extension we have that, there exists $c_\alpha \in \mathbb{R}$ such that, for every $f\in L^p(\mathbb{R}^n,\gamma_{-1})$,
$$
R_\alpha (f)(x)=\lim_{\varepsilon \rightarrow 0^+}\int_{|x-y|>\varepsilon}R_\alpha (x,y)f(y)dy+c_\alpha f(x),\quad \mbox{for almost all }x\in \mathbb{R}^n,
$$
where $c_\alpha =0$ if $\alpha_i$ is odd for some $i=1,...,n$. 

When $n=1$ and $\alpha\in \mathbb{N}$ is even the integral defining $R_\alpha$ is absolutely convergent.
\end{thm}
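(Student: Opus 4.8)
The plan is to prove Theorem~\ref{Th1.2} by combining the principal value formula of Theorem~\ref{Th1.1}, the $L^2(\mathbb{R}^n,\gamma_{-1})$-boundedness of $R_\alpha$ proved above, and the mapping properties of the negative powers $\mathcal{A}^{-\beta}$ obtained in Section~2, through a local/global decomposition of the kernel adapted to the measure $\gamma_{-1}$. Fix a small parameter $h>0$ and consider the local region $N_h=\{(x,y)\in\mathbb{R}^n\times\mathbb{R}^n:\ |x-y|\le h\min\{1,|x|^{-1}\}\}$. Split $R_\alpha(x,y)=R_\alpha(x,y)\chi_{N_h}(x,y)+R_\alpha(x,y)\chi_{N_h^c}(x,y)$ and denote by $R_\alpha^{\mathrm{loc}}$ and $R_\alpha^{\mathrm{glob}}$ the corresponding (truncated, in the first case) integral operators, so that by Theorem~\ref{Th1.1}, for $f\in C_c^\infty(\mathbb{R}^n)$, $R_\alpha f=R_\alpha^{\mathrm{loc}}f+R_\alpha^{\mathrm{glob}}f$ with $R_\alpha^{\mathrm{loc}}f(x)=\lim_{\varepsilon\to 0^+}\int_{|x-y|>\varepsilon}R_\alpha(x,y)\chi_{N_h}(x,y)f(y)\,dy+c_\alpha f(x)$ and $R_\alpha^{\mathrm{glob}}f(x)=\int_{\mathbb{R}^n}R_\alpha(x,y)\chi_{N_h^c}(x,y)f(y)\,dy$. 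It then suffices to prove that each of $R_\alpha^{\mathrm{loc}}$ and $R_\alpha^{\mathrm{glob}}$ extends to a bounded operator on $L^p(\mathbb{R}^n,\gamma_{-1})$ for every $1<p<\infty$.

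For the global part I would first obtain pointwise kernel estimates. Differentiating the explicit expression for $T_t^{\mathcal{A}}(x,y)$ in $x$ produces polynomials in $(x-e^{-t}y)/(1-e^{-2t})^{1/2}$ times $T_t^{\mathcal{A}}(x,y)$, and after a change of variables such as $r=e^{-t}$ the $t$-integral defining $R_\alpha(x,y)$ is split into the ranges near $t=0$, intermediate $t$, and $t\to\infty$. On $N_h^c$ this yields a bound $|R_\alpha(x,y)\chi_{N_h^c}(x,y)|\le G(x,y)$ by an explicit positive kernel. The main obstacle of the proof is to show that the positive operator with kernel $G$ is bounded on $L^p(\mathbb{R}^n,\gamma_{-1})$ for all $1<p<\infty$ in spite of the exponential growth of $\gamma_{-1}$: here the intuition from the Ornstein--Uhlenbeck setting must be reversed, and one exploits the structure of $\gamma_{-1}$ together with the $L^1(\mathbb{R}^n,\gamma_{-1})\to L^{1,\infty}(\mathbb{R}^n,\gamma_{-1})$ and $L^2(\mathbb{R}^n,\gamma_{-1})$ bounds for (the global parts of) $\mathcal{A}^{-\beta}$ from Section~2, either by dominating $G$ by such kernels and interpolating, or by a direct weighted Schur argument. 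This also gives weak type $(1,1)$ for $R_\alpha^{\mathrm{glob}}$.

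For the local part I would verify that, on $N_h$, the kernel satisfies the standard Calder\'on--Zygmund estimates $|R_\alpha(x,y)|\le C|x-y|^{-n}$ and $|\nabla_x R_\alpha(x,y)|+|\nabla_y R_\alpha(x,y)|\le C|x-y|^{-n-1}$; these follow from the explicit kernel because on $N_h$ the factor $1-e^{-2t}$ plays the role of $t$ for small $t$ and the constraint $|x-y|\lesssim\min\{1,|x|^{-1}\}$ keeps all the ``Gaussian'' correction terms under control, so that near the diagonal $R_\alpha(x,y)$ behaves like a classical higher order Riesz kernel. Since $R_\alpha$ is bounded on $L^2(\mathbb{R}^n,\gamma_{-1})$ (shown above) and $R_\alpha^{\mathrm{glob}}$ is bounded on $L^2(\mathbb{R}^n,\gamma_{-1})$ (by the previous paragraph), $R_\alpha^{\mathrm{loc}}$ is bounded on $L^2(\mathbb{R}^n,\gamma_{-1})$. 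On $N_h$ the weight $e^{|x|^2}$ is comparable for points at distance $\lesssim\min\{1,|x|^{-1}\}$ and locally doubling, so the Calder\'on--Zygmund machinery upgrades the $L^2(\gamma_{-1})$-boundedness and the above kernel estimates to boundedness of $R_\alpha^{\mathrm{loc}}$ on $L^p(\mathbb{R}^n,\gamma_{-1})$ for all $1<p<\infty$ (and weak type $(1,1)$). Combining with the previous paragraph, $R_\alpha$ extends to a bounded operator on $L^p(\mathbb{R}^n,\gamma_{-1})$, $1<p<\infty$.

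Finally, to obtain the principal value representation for arbitrary $f\in L^p(\mathbb{R}^n,\gamma_{-1})$, I would show that the maximal truncated operator $R_\alpha^{*}f(x)=\sup_{\varepsilon>0}\Big|\int_{|x-y|>\varepsilon}R_\alpha(x,y)f(y)\,dy\Big|$ is bounded on $L^p(\mathbb{R}^n,\gamma_{-1})$: its global contribution is dominated by the positive operator $f\mapsto\int_{\mathbb{R}^n}G(x,y)|f(y)|\,dy$, bounded by the second paragraph, and its local contribution is controlled by the usual Cotlar-type argument for maximal Calder\'on--Zygmund truncations on (locally) spaces of homogeneous type, using the third paragraph. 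Given $f\in L^p(\mathbb{R}^n,\gamma_{-1})$, choose $f_j\in C_c^\infty(\mathbb{R}^n)$ with $f_j\to f$ in $L^p(\mathbb{R}^n,\gamma_{-1})$; applying Theorem~\ref{Th1.1} to each $f_j$ and using the $L^p$-boundedness of $R_\alpha$ and of $R_\alpha^{*}$, a standard limiting argument shows that $\lim_{\varepsilon\to 0^+}\int_{|x-y|>\varepsilon}R_\alpha(x,y)f(y)\,dy$ exists for almost every $x$ and equals $R_\alpha f(x)-c_\alpha f(x)$, with $c_\alpha=0$ when some $\alpha_i$ is odd. For $n=1$ and $\alpha$ even, Theorem~\ref{Th1.1} shows that the singular part of the kernel is locally integrable with no cancellation needed; the same local/global splitting then yields $\int_{\mathbb{R}}|R_\alpha(x,y)||f(y)|\,dy<\infty$ for a.e.\ $x$ and every $f\in L^p(\mathbb{R},\gamma_{-1})$, so the integral defining $R_\alpha$ is absolutely convergent in that case.
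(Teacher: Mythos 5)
Your overall architecture (local/global splitting adapted to $\gamma_{-1}$, Calder\'on--Zygmund treatment of the local part, a positive-kernel bound plus maximal truncations and density for the principal value) is the same as the paper's, and your local-part argument is essentially the alternative the authors themselves acknowledge in their remark after the proof: they instead compare $R_{\alpha,{\rm loc}}$ with the classical Euclidean Riesz kernel via the bound $|R_\alpha(x,y)-\mathbb{R}_\alpha(x,y)|\leq C\sqrt{1+|x|}\,|x-y|^{-n+\frac12}$ on the local region, which they also reuse later, but a direct local CZ argument is fine. The genuine gap is in the global part, which is where the real content of the theorem lies. First, your claim that the argument ``also gives weak type $(1,1)$ for $R_\alpha^{\mathrm{glob}}$'' cannot be right for $|\alpha|\geq 3$: the local part is weak $(1,1)$ (by exactly the comparison/CZ arguments above), so weak $(1,1)$ for the global part would make $R_\alpha$ weak $(1,1)$, contradicting \cite[Theorem 1.1]{BrSj}. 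For the same reason your first proposed mechanism --- dominating the global kernel by the (weak $(1,1)$) kernels of $\mathcal{A}^{-\beta}$ from Section 2 and interpolating --- is impossible; moreover interpolation from weak $(1,1)$ and $L^2$ would in any case only reach $1<p\leq 2$.

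That leaves your second alternative, the weighted Schur test, which is indeed what the paper does, but the decisive quantitative point is missing from your outline. After absorbing the Hermite polynomial factors one only retains $e^{-\eta\,|x-e^{-t}y|^2/(1-e^{-2t})}$ with some $\eta<1$, and the Schur test with the natural weight $e^{(|x|^2-|y|^2)/p}$ against the resulting bound \eqref{acotRalpha} closes only when $\eta>1/p$: the exponent one must control is $(\tfrac{\eta}{2}-\tfrac1p)(|y|^2-|x|^2)-\tfrac{\eta}{2}|x-y||x+y|$, and with a generic constant $c$ in place of $\eta$ the argument fails for $p$ close to $1$. So the kernel estimate must be carried out keeping $\eta$ arbitrarily close to $1$, and the local/global cut must itself depend on $p$ (the paper takes $N_\beta$ with $\beta=1/\eta$, so that $(\sqrt{\eta}x,\sqrt{\eta}y)\in N^c$ when $(x,y)\in N_\beta^c$, which is what lets them run the Ornstein--Uhlenbeck-type estimates of \cite{PeSo} with the parameter $\eta$). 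Without identifying this $p$-dependent choice of $\eta$ and of the region, ``a direct weighted Schur argument'' does not yet yield boundedness for all $1<p<\infty$; the rest of your plan (maximal operator, density, and the $n=1$, $\alpha$ even absolute convergence, which follows because the classical one-dimensional kernel vanishes for even $\alpha$) goes through once this estimate is in place.
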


As it was mentioned, Bruno and Sj\"ogren (\cite[Theorem 1.1]{BrSj}) proved that $R_\alpha $ is bounded from $L^1(\mathbb{R}^n,\gamma_{-1})$ into $L^{1,\infty}(\mathbb{R}^n,\gamma_{-1})$ if and only if $1\leq |\alpha|\leq 2$. This property also holds in the Gaussian setting (see \cite{FS} and \cite{GCMST1}). Aimar, Forzani and Scotto (\cite{AFS}) introduced Riesz type operators $\mathfrak{R}_\alpha$, $\alpha \in \mathbb{N}^n\setminus\{0\}$, related to the Ornstein-Uhlenbeck operator.  $\mathfrak{R}_\alpha$ is bounded from $L^1(\mathbb{R}^n,\gamma_1)$ into $L^{1,\infty}(\mathbb{R}^n,\gamma_1)$ for every $\alpha \in \mathbb{N}^n\setminus\{0\}$. Motivated by the results in \cite{AFS} we define Riesz transform in the inverse Gaussian setting whose behavior in $L^1(\mathbb{R}^n,\gamma_{-1})$ is different from the one for $R_\alpha$.

We can write $\mathcal{A}_0=\sum_{i=1}^n\delta _i\partial_{x_i}$, where, for every $i=1,...,n$, $\delta _i=-\frac{1}{2}e^{-x_i^2}\partial _{x_i}e^{x_i^2}$. We consider the operator $\mathcal{\bar{A}}_0=\sum_{i=1}^n\partial_{x_i}\delta_i$. We have that $\mathcal{\bar{A}}_0=-n+\mathcal{A}_0$. Let $\mathcal{\bar A}$ the closure of $\mathcal{\bar A}_0$ in $L^2(\mathbb{R}^n,\gamma_{-1})$. For every $k\in \mathbb{N}^n$, $\mathcal{\bar A}\widetilde{H}_k=|k|\widetilde{H}_k$, and the spectrum of $\mathcal{\bar A}$ in $L^2(\mathbb{R}^n,\gamma_{-1})$ is the set of nonnegative integers. For every $\ell , m\in \mathbb{N}$, we have that $\delta_u^m\widetilde{H}_\ell (u)=(-1)^m\frac{\Gamma (\ell +1)}{\Gamma (\ell -m +1)}\widetilde{H}_{\ell -m}(u)$, $u\in \mathbb{R}$. Here, we understand $\widetilde{H}_\ell=0$ when $\ell <0$. Then, for every $\alpha =(\alpha _1,...,\alpha _n)\in \mathbb{N}^n$, by denoting $\delta ^\alpha=\prod_{i=1}^n\delta _i^{\alpha_i}$, we get
$$
\delta ^\alpha \widetilde{H}_k=(-1)^{|\alpha|}\prod_{i=1}^n\frac{\Gamma (k_i+1)}{\Gamma (k_i-\alpha _i+1)}\widetilde{H}_{k-\alpha},\quad k=(k_1,...,k_n)\in \mathbb{N}^n,\;k_r\geq \alpha _r, \;r=1,...,n.
$$
If $\alpha =(\alpha _1,...,\alpha_n)\in \mathbb{N}^n\setminus\{0\}$ and $k=(k_1,...,k_n)\in \mathbb{N}^n\setminus\{0\}$, with $k_r\geq \alpha _r$, $r=1,...,n$,
$$
\delta ^\alpha \mathcal{\bar A}^{-|\alpha|/2}(\widetilde{H}_k)=\frac{(-1)^{|\alpha|}}{|k|^{\frac{|\alpha|}{2}}}\prod_{i=1}^n\frac{\Gamma (k_i+1)}{\Gamma (k_i-\alpha _i+1)}\widetilde{H}_{k-\alpha}.
$$
In other case, $\delta^\alpha \mathcal{\bar A}^{-|\alpha|/2}(\widetilde{H}_k)=0$ (see Section \ref{barA} for details).

Let $\alpha =(\alpha _1,...,\alpha_n)\in \mathbb{N}^n\setminus\{0\}$. We define the Riesz transform $\bar{R}_\alpha$ on $L^2(\mathbb{R}^n,\gamma_{-1})$ as follows
$$
\overline{R}_\alpha (f)=(-1)^{|\alpha|}\sum_{\substack{k=(k_1,...,k_n)\in \mathbb{N}^n \\k_r\geq \alpha_r, r=1,...,n}}\frac{1}{|k|^{\frac{|\alpha|}{2}}}\prod_{i=1}^n \frac{\Gamma (k_i+1)}{\Gamma (k_i-\alpha _i+1)}c_k(f)\widetilde{H}_{k-\alpha}.
$$
Thus, $\overline{R}_\alpha$ is bounded from $L^2(\mathbb{R}^n,\gamma_{-1})$ into itself. If $f\in C_c^\infty (\mathbb{R}^n)$ and $c_0(f)=0$ then $\overline{R}_\alpha f=\delta ^\alpha \mathcal{\bar A}^{-|\alpha|/2}f$.

\begin{thm}\label{Th1.3}
Let $\alpha =(\alpha _1,...,\alpha_n)\in \mathbb{N}^n\setminus\{0\}$. The Riesz transform $\overline{R}_\alpha$ can be extended from $L^2(\mathbb{R}^n,\gamma_{-1})\cap L^p(\mathbb{R}^n,\gamma_{-1})$ to $L^p(\mathbb{R}^n,\gamma_{-1})$ as a bounded operator from

(i) $L^p(\mathbb{R}^n,\gamma_{-1})$ into itself, for every $1<p<\infty$.

(ii) $L^1(\mathbb{R}^n,\gamma_{-1})$ into $L^{1,\infty}(\mathbb{R}^n,\gamma_{-1})$, provided that $n=1$ or $|\alpha|>n$, when $n>1$. 

By denoting again $\overline{R}_\alpha$ to the extension we have that, for every $f\in L^p(\mathbb{R}^n,\gamma_{-1})$, $1\leq p<\infty$, 
$$
\overline{R}_\alpha (f)(x)=\lim_{\varepsilon \rightarrow 0^+}\int_{|x-y|>\varepsilon}\overline{R}_\alpha (x,y)f(y)dy+c_\alpha f(x),\quad \mbox{for almost all }x\in \mathbb{R}^n,
$$
where $c_\alpha=0$ when $\alpha _i$ is odd for some $i=1,...,n$.
Here
$$
\overline{R}_\alpha (x,y)=\frac{1}{\Gamma (\frac{|\alpha|}{2})}\int_0^\infty \delta_x^\alpha T_t^\mathcal{\bar{A}}(x,y)t^{\frac{|\alpha|}{2}-1}dt,\quad x,y\in \mathbb{R}^n,\;x\not=y.
$$
\end{thm}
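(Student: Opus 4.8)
The plan is to follow the now‑classical local--global splitting for singular integrals in Gaussian‑type settings, adapted to the growing measure $\gamma_{-1}$.

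\emph{Setup and kernel.} First I would prove the semigroup formula
$$
\overline R_\alpha (f)(x)=\frac{1}{\Gamma(|\alpha|/2)}\int_0^\infty t^{|\alpha|/2-1}\,\delta_x^\alpha\big(T_t^{\overline{\mathcal A}}f\big)(x)\,dt
$$
on the linear span of the Hermite functions $\widetilde H_k$, where it reduces to the spectral definition of $\overline R_\alpha$ via $\int_0^\infty e^{-|k|t}t^{|\alpha|/2-1}\,dt=\Gamma(|\alpha|/2)\,|k|^{-|\alpha|/2}$ for $k\ne 0$ (the term $k=0$ being annihilated by $\delta^\alpha$). Using $\delta_{x_i}=-\tfrac12(\partial_{x_i}+2x_i)$ one computes $\delta_{x_i}T_t^{\overline{\mathcal A}}(x,y)=\frac{e^{-t}(e^{-t}x_i-y_i)}{1-e^{-2t}}\,T_t^{\overline{\mathcal A}}(x,y)$, so that iterating gives $\delta_x^\alpha T_t^{\overline{\mathcal A}}(x,y)=P_\alpha(t,x,y)\,T_t^{\overline{\mathcal A}}(x,y)$, where $P_\alpha$ is a polynomial in $x,y$ whose top‑degree part is $\prod_{i}\big(\tfrac{e^{-t}(e^{-t}x_i-y_i)}{1-e^{-2t}}\big)^{\alpha_i}$ and whose lower‑degree terms carry extra negative powers of $1-e^{-2t}$. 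Moving the $t$‑integral inside the $y$‑integral off the diagonal (justified by these expressions) produces $\overline R_\alpha(x,y)$. Passing to the variable $s=\tanh(t/2)$ one gets the workable form
$$
T_t^{\overline{\mathcal A}}(x,y)=\Big(\tfrac{(1+s)^2}{4\pi s}\Big)^{n/2}\,e^{-|x-y|^2/(4s)}\,e^{-(|x|^2-|y|^2)/2}\,e^{-s|x+y|^2/4},
$$
with a corresponding expression for $\delta_x^\alpha T_t^{\overline{\mathcal A}}(x,y)$. Split $\overline R_\alpha(x,y)=\overline R_\alpha^{\,\mathrm{loc}}(x,y)+\overline R_\alpha^{\,\mathrm{glob}}(x,y)$ according to whether $(x,y)$ lies in the admissible region $N=\{(x,y):|x-y|\le c\min(1,1/|x|)\}$ or not.

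\emph{Local part.} On $N$ the factor $e^{-(|x|^2-|y|^2)/2}$ is bounded above and below, and the $s$‑integral is governed by $s\sim|x-y|^2$; from the expressions above one checks that $\overline R_\alpha^{\,\mathrm{loc}}(x,y)$ differs from a standard Calder\'on--Zygmund kernel with respect to Lebesgue measure by a bounded smooth factor, with size $\lesssim|x-y|^{-n}$ and gradient $\lesssim|x-y|^{-n-1}$. Since $\gamma_{-1}$ is locally doubling and comparable to Lebesgue measure on admissible balls, the Calder\'on--Zygmund theory adapted to $\gamma_{-1}$ gives that the local operator is bounded on $L^p(\mathbb R^n,\gamma_{-1})$ for $1<p<\infty$ and from $L^1(\mathbb R^n,\gamma_{-1})$ into $L^{1,\infty}(\mathbb R^n,\gamma_{-1})$, and that the truncated integrals converge a.e.; the constant in front of $f$ is the non‑cancelling near‑diagonal contribution of $P_\alpha$, which vanishes by oddness in the variables $x_i-y_i$ unless every $\alpha_i$ is even, yielding the asserted $c_\alpha$.

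\emph{Global part and the main obstacle.} For $(x,y)\in N^c$ I would bound $|\overline R_\alpha^{\,\mathrm{glob}}(x,y)|$ pointwise by a positive kernel and control the resulting positive operator directly. Carrying out the $s$‑integral and using the interplay of $e^{-|x-y|^2/(4s)}$, $e^{-s|x+y|^2/4}$ and $e^{-(|x|^2-|y|^2)/2}$ against the density $e^{|x|^2}$ (note $|x-y|\,|x+y|\ge |x|^2-|y|^2$) yields off‑diagonal bounds from which: (i) a weighted Schur argument — or domination by a small negative power $\overline{\mathcal A}^{-\varepsilon}$ together with the forward semigroup, invoking the estimates of Section~\ref{barA} and \cite{Sa} — gives boundedness on $L^p(\mathbb R^n,\gamma_{-1})$ for every $1<p<\infty$, proving (i); and (ii) under the hypothesis $n=1$ or $|\alpha|>n$ one obtains the uniform bound $\sup_{y}e^{-|y|^2}\int_{N^c}|\overline R_\alpha(x,y)|\,e^{|x|^2}\,dx<\infty$, i.e.\ $L^1(\mathbb R^n,\gamma_{-1})$‑boundedness of the global part, which with the local estimate proves (ii). Combining the two parts and using the density of $L^2\cap L^p$ (respectively, of the functions with $c_0(f)=0$) then gives the principal‑value representation for all $1\le p<\infty$. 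The technical heart, and the place where the dichotomy $n=1$ versus $|\alpha|>n$ is forced, is step (ii): after the $s$‑integral the leading contribution of $\overline R_\alpha^{\,\mathrm{glob}}$ behaves, up to the surviving exponential, like $(|x|+|y|)^{|\alpha|}|x-y|^{-n-|\alpha|}$, and balancing this polynomial growth (coming from $\delta_x^\alpha$ and from the $s^{-n/2}$ volume factor) against the Gaussian decay and the measure $e^{|x|^2}$, uniformly over $N^c$, is delicate; the lower‑order terms of $P_\alpha$ and the regime where $s$ is bounded away from $0$ and from $1$ are dealt with similarly once the leading term is understood.
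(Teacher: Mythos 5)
Your general scheme coincides with the paper's: the kernel representation via $\delta_x^\alpha T_t^{\bar{\mathcal{A}}}(x,y)$, a local/global splitting along the region $N$, control of the local part by comparison with a Euclidean Calder\'on--Zygmund kernel (the paper compares with $e^{|y|^2-|x|^2}$ times the classical Riesz kernel and transfers boundedness via Salogni's Proposition 3.2.5, rather than verifying CZ estimates for $\overline{R}_\alpha$ directly, but this is a minor variation), and a weighted Schur test for the global part when $1<p<\infty$. Your treatment of the principal value and of the constant $c_\alpha$ is only sketched, but it is in the spirit of the paper's careful $C_c^\infty$ computation with boundary terms.

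The genuine gap is in the endpoint (ii). You claim that for $n=1$ or $|\alpha|>n$ the global part satisfies $\sup_y e^{-|y|^2}\int_{N^c}|\overline{R}_\alpha(x,y)|\,e^{|x|^2}dx<\infty$, i.e.\ that it is bounded on $L^1(\mathbb{R}^n,\gamma_{-1})$. This is false for every $\alpha$: in the region $\langle x,y\rangle\le 0$ the large-$t$ part of the integral dominates, since $\delta_x^\alpha T_t^{\bar{\mathcal{A}}}(x,y)$ behaves like $c\,e^{|y|^2-|x|^2}e^{-|\alpha|t}\widetilde{H}_\alpha(y)$ as $t\to\infty$, so $\overline{R}_\alpha(x,y)\approx c\,H_\alpha(y)\,e^{-|x|^2}$ for $|x|$ large with $\langle x,y\rangle\le0$, and therefore $\int_{N^c}|\overline{R}_\alpha(x,y)|e^{|x|^2}dx=\infty$ for a.e.\ $y$. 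Note that $e^{-|x|^2}$ lies in $L^{1,\infty}(\mathbb{R}^n,\gamma_{-1})$ but not in $L^1(\mathbb{R}^n,\gamma_{-1})$, which is exactly why weak type $(1,1)$ is the correct endpoint and why no strong $L^1$ bound of the global kernel can hold. The paper's proof of (ii) is a genuinely weak-type argument: after the change of variables $r=e^{-t}$ it splits the kernel at $r=1/2$, uses $r^{|\alpha|-1}\le r^{n}$ --- this is precisely where the hypothesis $|\alpha|>n$, i.e.\ $|\alpha|\ge n+1$, enters, to produce the weight $r^n$ required by Salogni's Lemma 3.3.3 --- obtaining the bound $e^{|y|^2-|x|^2}\min\{(1+|x|)^n,(|x|\sin\theta(x,y))^{-n}\}$ for the small-$r$ piece, treats the piece $r\in(1/2,1)$ as in Bruno--Sj\"ogren, and then invokes the weak-type $(1,1)$ lemmas of Salogni and Bruno--Sj\"ogren for positive operators with such kernels on the global region. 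Relatedly, your heuristic that the dichotomy $n=1$ versus $|\alpha|>n$ comes from balancing the near-diagonal growth $(|x|+|y|)^{|\alpha|}|x-y|^{-n-|\alpha|}$ is off target: that behaviour belongs to the local part; the restriction is forced by the large-time (small $r$) regime of the global kernel.
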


Let $X$ be a Banach space. Suppose that $\{M_r\}_{r=1}^m$ is a $X$-valued martingale. The sequence $\{d_r=M_r-M_{r-1}\}_{r=1}^m$, where $M_0$ is understood as 0, is called the martingale difference associated with $\{M_r\}_{r=1}^m$. We say that $\{d_r\}_{r=1}^m$ is a $L^p$-martingale difference sequence when it is the difference sequence associated with a $L^p$-martingale. If $1<p<\infty$, $X$ is said to be a $UMD_p$-space when there exists $\beta >0$ such that for all $X$-valued $L^p$-martingale difference sequence $\{d _r\}_{r=1}^m$ and for all $(\varepsilon _r)_{r=1}^m\in \{-1,1\}^{m}$,
$$
\mathbb{E}\Big\|\sum_{r=1}^m\varepsilon _rd_r\Big\|^p\leq \beta \mathbb{E}\Big\|\sum_{r=1}^md_r\Big\|^p.
$$

$UMD$ is an abbreviation of unconditional martingale difference. If $X$ is $UMD_p$ for some $1<p<\infty$, then $X$ is $UMD_p$ for every $1<p<\infty$. This fact justifies to call $UMD$ to the property without any reference to $p$. Burkholder \cite{Bu} and Bourgain \cite{Bou} proved that the $UMD$ property of $X$ is necessary and sufficient for the boundedness of the Hilbert transform in $L^p(\mathbb{R},X)$, $1<p<\infty$. The $UMD$ property is a central notion in the development of the harmonic analysis when the functions are taking values in infinite dimensional spaces. $UMD$ Banach spaces have been characterized by using other singular integrals that can be seen as Riesz transforms associated to orthogonal systems (see \cite{AT}, \cite{BFMT}, \cite{BFRT} and \cite{HTV}, for instance). In the following result we characterize the Banach spaces with the $UMD$ property by using Riesz transforms in the inverse Gaussian setting. For every $i=1,...,n$, we define $e^i=(e_1^i,...,e_n^i)$ where $e_j^i=0$, $i\not =j$, and $e_i^i=1$. 

\begin{thm}\label{Th1.4}
Let $X$ be a Banach space. The following assertions are equivalent.

(i) $X$ is $UMD$.

(ii) For every $i=1,...,n$, $R_{e^i}$ can be extended from $L^p(\mathbb{R}^n,\gamma _{-1})\otimes X$ to $L^p(\mathbb{R}^n,\gamma _{-1},X)$ as a bounded operator from $L^p(\mathbb{R}^n,\gamma _{-1},X)$ into itself, for every $1<p<\infty$.

(iii) For every $i=1,...,n$, $R_{e^i}$ can be extended from $L^p(\mathbb{R}^n,\gamma _{-1})\otimes X$ to $L^p(\mathbb{R}^n,\gamma _{-1},X)$ as a bounded operator from $L^p(\mathbb{R}^n,\gamma _{-1}, X)$ into itself, for some $1<p<\infty$.

(iv) For every $i=1,...,n$, $R_{e^i}$ can be extended from $L^1(\mathbb{R}^n,\gamma _{-1})\otimes X$ to $L^1(\mathbb{R}^n,\gamma _{-1},X)$ as a bounded operator from $L^1(\mathbb{R}^n,\gamma _{-1}, X)$ into $L^{1,\infty}(\mathbb{R}^n,\gamma _{-1}, X)$.

Also the equivalences hold when in the properties (ii), (iii) and (iv) we replace $R_{e^i}$ by the maximal operator $R_{e^i}^*$ defined by
$$
R_{e^i}^*(f)(x)=\sup_{\varepsilon >0}\Big\|\int_{|x-y|>\varepsilon}R_{e^i}(x,y)f(y)dy\Big\|,\quad x\in \mathbb{R}^n\mbox{ and }i=1,...,n.
$$
\end{thm}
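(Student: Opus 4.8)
The plan is to prove the cycle of implications (i) $\Rightarrow$ (ii) $\Rightarrow$ (iii) $\Rightarrow$ (i), together with (i) $\Leftrightarrow$ (iv), and to handle the maximal operator version in parallel. The natural strategy is the same that works for Riesz transforms attached to other orthogonal systems (Hermite, Laguerre, Bessel; see \cite{AT}, \cite{BFMT}): transfer the vector-valued boundedness of the Hilbert transform on $L^p(\mathbb{R},X)$ to the one-dimensional building blocks of $R_{e^i}$ and then tensorize in the remaining variables. Concretely, for (i) $\Rightarrow$ (ii) I would fix $i$, freeze the variables $x_j$ with $j\neq i$, and reduce matters to a one-dimensional operator in the $x_i$-variable acting on $L^p(\mathbb{R}, e^{x_i^2}dx_i, X)$; by Fubini and the tensor structure of $T_t^{\mathcal A}(x,y)=\prod_{j=1}^n T_t^{\mathcal A,(1)}(x_j,y_j)$ the full boundedness on $L^p(\mathbb{R}^n,\gamma_{-1},X)$ follows from the $n=1$ case by iteration. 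In the $n=1$ case, the kernel $R_{e^1}(x,y)=\frac{1}{\Gamma(1/2)}\int_0^\infty \partial_x T_t^{\mathcal A}(x,y)\,t^{-1/2}dt$ (from Theorem \ref{Th1.1}) should be split, following the now-standard Gaussian-type decomposition, into a "local" part supported near the diagonal and a "global" part away from it. The global part has an integrable kernel with respect to $\gamma_{-1}$ and is handled by a direct size estimate that is insensitive to $X$; the local part is compared to a constant multiple of the Hilbert transform kernel $\frac{1}{x-y}$ plus a remainder with a Calderón–Zygmund-type integrable kernel, so its $L^p(\mathbb{R},X)$-boundedness is equivalent to that of the Hilbert transform, hence to the UMD property of $X$.

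For the maximal operator $R_{e^i}^*$ the same decomposition applies: the global truncated maximal operator is dominated by a positive integral operator with $\gamma_{-1}$-integrable kernel (uniformly in $\varepsilon$), and the local truncated maximal operator is controlled by the truncated Hilbert transform maximal operator plus a maximal Calderón–Zygmund remainder; both are bounded on $L^p(\mathbb{R},X)$, $1<p<\infty$, when $X$ is UMD, since UMD implies boundedness of the maximal Hilbert transform (via Cotlar's inequality, which holds for UMD-valued functions). This gives (i) $\Rightarrow$ (ii) in both formulations. The implication (ii) $\Rightarrow$ (iii) is trivial. For (iii) $\Rightarrow$ (i) — and likewise (iv) $\Rightarrow$ (i) — I would run the converse transference: assuming $R_{e^1}$ (or $R_{e^1}^*$) is bounded on $L^p(\mathbb{R},\gamma_{-1},X)$ (resp. from $L^1$ into $L^{1,\infty}$), I extract from it, by a localization and dilation argument concentrating near a fixed point $x_0$ where the measure $e^{x^2}dx$ is essentially constant, the boundedness of the Hilbert transform (resp. of weak type $(1,1)$ for the Hilbert transform) on $L^p(\mathbb{R},X)$; by the Burkholder–Bourgain theorem \cite{Bu}, \cite{Bou} this forces $X$ to be UMD. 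The weak-type endpoint (i) $\Leftrightarrow$ (iv) uses in the forward direction that the global part is of weak type $(1,1)$ with respect to $\gamma_{-1}$ (this is where the $1\leq|\alpha|\leq 2$ restriction of \cite{BrSj} is not an obstacle, since $|e^i|=1$) and that the local part reduces to the weak-$(1,1)$ boundedness of the $X$-valued Hilbert transform, which is again equivalent to UMD.

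The main obstacle I anticipate is the careful analysis of the local part of the kernel: one must show that near the diagonal $R_{e^i}(x,y)$ equals $\frac{c}{x_i-y_i}$ times a smooth cutoff plus a kernel satisfying standard size and smoothness (Hörmander) estimates with respect to Lebesgue measure on a bounded region, so that the difference operator is a genuine Calderón–Zygmund operator whose vector-valued theory is classical and $X$-independent in the relevant range. Extracting the precise principal-value constant and the smooth remainder requires expanding $\partial_x T_t^{\mathcal A}(x,y)$ for small $t$ and $|x-y|$ and integrating in $t$ against $t^{-1/2}$; this is delicate but parallels the computation already needed for Theorems \ref{Th1.1} and \ref{Th1.2}, so I would lean on the kernel estimates established there. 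A secondary technical point is making the transference in (iii) $\Rightarrow$ (i) rigorous: one needs that the rescaled local kernels converge, in an appropriate sense, to the Hilbert kernel and that the error terms vanish in the limit, which is where the smoothness estimates on $R_{e^i}(x,y)$ are used a second time.
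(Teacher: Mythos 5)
Your overall plan (local/global decomposition, comparison of the local part with a classical convolution kernel, size estimates for the global part, and a converse transference) is in the spirit of the paper's proof, but it contains a genuine gap at its very first step: the claimed reduction to the case $n=1$ by freezing the variables $x_j$, $j\neq i$, and ``iterating'' does not work. Although for each fixed $t$ the kernel $T_t^{\mathcal A}(x,y)$ factors as a product of one-dimensional kernels, the Riesz kernel $R_{e^i}(x,y)=\Gamma(1/2)^{-1}\int_0^\infty \partial_{x_i}T_t^{\mathcal A}(x,y)\,t^{-1/2}dt$ does not: the $t$-integration couples all the variables, and $\partial_{x_i}\mathcal A^{-1/2}$ is not of the form (one-dimensional operator in $x_i$) tensored with the identity in the remaining variables --- on the system $\widetilde H_k$ it acts through the factor $(|k|+n)^{-1/2}$, which depends on the whole multi-index $k$. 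Hence there is no one-dimensional operator whose iteration gives $R_{e^i}$, and the subsequent comparison of the local part with the Hilbert kernel $\frac{1}{x-y}$ only makes sense for $n=1$. The correct comparison, which is what the paper carries out, is in $\mathbb{R}^n$ with the Euclidean Riesz kernel $\mathbb{R}_{e^i}(x-y)=c\,(x_i-y_i)/|x-y|^{n+1}$, using the estimates \eqref{acotRalpha} and \eqref{acotdif} already proved for Theorem \ref{Th1.2} (summarized in \eqref{5.1}), and then invoking the known characterization of UMD by the Euclidean Riesz transforms $\mathbb{R}_{e^i}$ rather than by the Hilbert transform.

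Two further points you leave unaddressed. First, passing between Lebesgue-measure and $\gamma_{-1}$-boundedness for the local parts is not automatic: the local region $N_\beta$ is an unbounded neighbourhood of the diagonal of variable width, so one needs the transfer principle for local operators (\cite[Proposition 3.2.5]{Sa}) and its vector-valued and maximal versions (\cite[Propositions 2.3 and 2.4]{HTV}); on a genuinely ``bounded region'' argument alone the step fails. Second, in the converse implications (iii)$\Rightarrow$(i) and (iv)$\Rightarrow$(i), your localization-and-dilation sketch, once the $n=1$ reduction is abandoned, yields only the boundedness of the \emph{local} part of $\mathbb{R}_{e^i}$ on $L^p(\mathbb{R}^n,dx,X)$; to recover the full operator one uses the dilation invariance of $\mathbb{R}_{e^i}$ as in the proof of \cite[Theorem 1.10]{HTV}, which is precisely the paper's route. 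Your treatment of the maximal operators via a vector-valued Cotlar inequality is a workable alternative to the paper's argument (which handles $\mathbb{R}_{e^i,*}$ locally through \cite[Proposition 2.3]{HTV}), but it too must be run in $\mathbb{R}^n$ against the maximal Euclidean Riesz transform, not the maximal Hilbert transform.
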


\begin{thm}\label{Th1.5}
Let $X$ be a Banach space. The following assertions are equivalent. 

(i) $X$ is $UMD$.

(ii) For every $1\leq p<\infty$ there exists, for each $i=1,...,n$, the limit
$$
\lim_{\varepsilon \rightarrow 0^+}\int_{|x-y|>\varepsilon }R_{e^i}(x,y)f(y)dy,\quad \mbox{ for almost all }x\in \mathbb{R}^n,
$$
for every $f\in L^p(\mathbb{R}^n,\gamma _{-1},X)$.

(iii) For some $1\leq p<\infty$, there exists, for every $i=1,...,n$, the limit
$$
\lim_{\varepsilon\rightarrow  0^+}\int_{|x-y|>\varepsilon}R_{e^i}(x,y)f(y)dy,\quad \mbox{ for almost all }x\in \mathbb{R}^n,
$$
for each $f\in L^p(\mathbb{R}^n,\gamma _{-1},X)$.

(iv) For every $1\leq p<\infty$, $f\in L^p(\mathbb{R}^n,\gamma _{-1},X)$ and $i=1,...,n$, $R_{e^i}^*(f)(x)<\infty$, for almost all $x\in \mathbb{R}^n$.

(v) For some $1\leq p<\infty$ and for every $f\in L^p(\mathbb{R}^n,\gamma _{-1},X)$ and $i=1,...,n$, $R_{e^i}^*(f)(x)<\infty$, for almost all $x\in \mathbb{R}^n$.
\end{thm}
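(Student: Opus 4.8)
The plan is to establish the implications $(i)\Rightarrow(ii)\Rightarrow(iii)$ and $(i)\Rightarrow(iv)\Rightarrow(v)$, together with the two reverse implications $(iii)\Rightarrow(i)$ and $(v)\Rightarrow(i)$; since $(ii)\Rightarrow(iii)$ and $(iv)\Rightarrow(v)$ are trivial, this closes all the equivalences. The forward implications out of $(i)$ will be consequences of the mapping properties of the maximal operator $R_{e^i}^\ast$ proved in Theorem~\ref{Th1.4}, whereas the substance of the statement lies in the two implications into $(i)$, which will be handled by a common localization argument transferring the problem to the classical Euclidean Riesz transforms.

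For $(i)\Rightarrow(iv)$: if $X$ is $UMD$, Theorem~\ref{Th1.4} (in its form for $R_{e^i}^\ast$) gives that $R_{e^i}^\ast$ is bounded from $L^p(\mathbb{R}^n,\gamma_{-1},X)$ into itself for $1<p<\infty$ and from $L^1(\mathbb{R}^n,\gamma_{-1},X)$ into $L^{1,\infty}(\mathbb{R}^n,\gamma_{-1},X)$; in either case $R_{e^i}^\ast(f)<\infty$ almost everywhere for every $f$ in the corresponding space. For $(i)\Rightarrow(ii)$: one first checks that for $f\in C_c^\infty(\mathbb{R}^n)\otimes X$ the limit $\lim_{\varepsilon\to0^+}\int_{|x-y|>\varepsilon}R_{e^i}(x,y)f(y)\,dy$ exists for almost every $x$ (this is, component by component, exactly the scalar statement of Theorem~\ref{Th1.2} for $|\alpha|=1$ and uses no Banach-space geometry). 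Then, given $f\in L^p(\mathbb{R}^n,\gamma_{-1},X)$ and $g\in C_c^\infty(\mathbb{R}^n)\otimes X$, the oscillation of $\varepsilon\mapsto\int_{|x-y|>\varepsilon}R_{e^i}(x,y)f(y)\,dy$ as $\varepsilon\to0^+$ is pointwise bounded by $2\,R_{e^i}^\ast(f-g)(x)$; since $C_c^\infty(\mathbb{R}^n)\otimes X$ is dense in $L^p(\mathbb{R}^n,\gamma_{-1},X)$ and $R_{e^i}^\ast$ is of weak type, letting $\|f-g\|_p\to0$ and using Chebyshev's inequality shows the oscillation vanishes almost everywhere, i.e.\ the limit exists a.e. The implications $(ii)\Rightarrow(iii)$ and $(iv)\Rightarrow(v)$ are immediate.

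For the reverse implications I would decompose, as in the proofs of Theorems~\ref{Th1.1}--\ref{Th1.3}, the kernel as $R_{e^i}(x,y)=R_{e^i}^{\mathrm{loc}}(x,y)+R_{e^i}^{\mathrm{glob}}(x,y)$, where $R_{e^i}^{\mathrm{glob}}$ has size estimates making the full (untruncated) integral $\int R_{e^i}^{\mathrm{glob}}(x,y)f(y)\,dy$ absolutely convergent for every Banach space $X$, so the global part always satisfies the conclusions of $(iii)$ and $(v)$ and may be discarded, while $R_{e^i}^{\mathrm{loc}}$ is supported near the diagonal. Fixing a small ball $B\subset\mathbb{R}^n$, on which $\gamma_{-1}$ is comparable to Lebesgue measure, the analysis of $\int_0^\infty\partial_{x_i}T_t^{\mathcal{A}}(x,y)t^{-1/2}\,dt$ near $x=y$ shows that on $B\times B$ one has $R_{e^i}^{\mathrm{loc}}(x,y)=c_n\dfrac{x_i-y_i}{|x-y|^{n+1}}+K(x,y)$ with $K$ the kernel of an operator bounded on $L^p(B,X)$ for all $X$. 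Hence $(iii)$ (resp.\ $(v)$) forces, for every $f\in L^p(B,X)$, the a.e.\ existence on $B$ of $\lim_{\varepsilon\to0^+}\int_{\varepsilon<|x-y|}c_n\frac{x_i-y_i}{|x-y|^{n+1}}f(y)\,dy$ (resp.\ the a.e.\ finiteness of the corresponding supremum over $\varepsilon>0$), i.e.\ the maximal truncated $i$-th Euclidean Riesz transform $(\mathcal{R}_i)^\ast$ is finite a.e.\ on $L^p(B,X)$. By Nikishin's theorem applied to the sublinear operator $(\mathcal{R}_i)^\ast$ on the Banach space $L^p(B,X)$, for every $\delta>0$ there are a measurable $E\subset B$ with $|B\setminus E|<\delta$ and an exponent $q$ such that $(\mathcal{R}_i)^\ast$ maps $L^p(B,X)$ into $L^{q,\infty}(E,X)$ boundedly. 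Using the translation and dilation equivariance of the Euclidean Riesz transforms to move and rescale such estimates (which is possible precisely because we are now in the Euclidean setting, and is what makes the localization essential) one removes the exceptional set and obtains that $\mathcal{R}_i$ is bounded on $L^p(\mathbb{R}^n,X)$; by the classical characterization of $UMD$ Banach spaces through the Euclidean Riesz transforms (the case $n=1$ being the Burkholder--Bourgain theorem for the Hilbert transform), $X$ is $UMD$.

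The main obstacle is this last direction, and within it two points require care. First, establishing that $R_{e^i}^{\mathrm{loc}}$ is genuinely a perturbation of the Euclidean Riesz kernel on a small ball with an error term producing an ($X$-independently) bounded maximal operator: this is where the kernel estimates developed for Theorems~\ref{Th1.1}--\ref{Th1.3} are reused, and they must be pushed to control the truncations uniformly, not merely the principal value. Second, the passage from ``finite almost everywhere on every small ball'' to ``globally $L^p(\mathbb{R}^n,X)$-bounded'', where one must verify that the exceptional sets furnished by Nikishin's theorem can be disposed of via the affine invariance of the Euclidean setting and that the resulting bound on $(\mathcal{R}_i)^\ast$ self-improves to a bound on $\mathcal{R}_i$ itself. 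A minor but necessary point throughout is to guarantee that all convergence statements are genuinely almost everywhere and not merely in measure, which is ensured by the maximal operator estimates.
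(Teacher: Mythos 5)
Your forward implications and your transference step coincide in substance with the paper: the paper also splits $R_{e^i}$ into local and global parts along $N_\beta$, uses the comparison estimates \eqref{5.1} (i.e.\ \eqref{acotdif} and \eqref{acotRalpha}) to replace $R_{e^i,{\rm loc}}$ by the Euclidean kernel up to an error whose absolute-value operator is bounded, and disposes of the global part by absolute convergence for $1<p<\infty$ and, for $p=1$, by the weak $(1,1)$ bound for the operator with kernel $|R_{e^i}(x,y)|\chi_{N_\beta^c}$ taken from \cite{BrSj} (a point worth making explicit: for $p=1$ the untruncated global integral is finite a.e.\ because of that weak-type bound, not because of a pointwise size estimate alone). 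The paper then proves the two-sided equivalence of the a.e.-convergence (and maximal-finiteness) statements for $R_{e^i}$ on $L^p(\gamma_{-1},X)$ and for $\mathbb{R}_{e^i}$ on $L^p(dx,X)$, and concludes by quoting the known Euclidean characterizations of $UMD$: an $n$-dimensional version of \cite[Theorem D]{TZ} for the principal-value statements and the remarks preceding \cite[Theorem 1.10]{HTV} for the maximal-operator statements.

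The genuine gap in your proposal is exactly where you diverge from this: in the implications $(iii)\Rightarrow(i)$ and $(v)\Rightarrow(i)$ you do not invoke these Euclidean results but sketch a proof of them via Nikishin's theorem, and the sketch omits the steps that carry the real difficulty. Nikishin only yields, on a set $E\subset B$ with small complement, a weak-type $(p,q)$ bound with $q$ possibly strictly smaller than $p$ (in general $q=\min(p,2)$); the removal of the exceptional set by translation, the identification $q=p$ by dilation invariance, and above all the passage from a weak-type (or restricted) bound for the maximal truncations $(\mathcal{R}_i)^*$ to strong $L^p(\mathbb{R}^n,dx,X)$-boundedness of $\mathcal{R}_i$ itself (equivalently, to $UMD$) are asserted rather than proved — you flag them as ``points requiring care'' but they are precisely the content of the cited theorems, so as written the argument is circular in spirit or incomplete. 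For $n\geq 2$ you also need (and should cite) the fact that boundedness of a single Euclidean Riesz transform $\mathcal{R}_i$ on $L^p(\mathbb{R}^n,dx,X)$ already forces $UMD$. The repair is simple and is what the paper does: once your transference reduces everything to the Euclidean statements (and note the paper's transference is global, via the exhaustion by balls $B(0,k)$, which is the form needed to apply the cited results, whereas yours is stated only for $f$ supported in a fixed small ball), conclude by citing \cite[Theorem D]{TZ} and \cite{HTV} instead of re-deriving them. Your proof of $(i)\Rightarrow(ii)$ and $(i)\Rightarrow(iv)$ via Theorem~\ref{Th1.4} for $R_{e^i}^*$, a.e.\ convergence on $C_c^\infty(\mathbb{R}^n)\otimes X$ from Theorem~\ref{Th1.2}, and the standard oscillation/density argument is correct and is a mild (legitimate) variant of the paper's route.
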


The properties stated in Theorems \ref{Th1.4} and \ref{Th1.5} can also be established when we replace $R$-Riesz transforms by $\overline{R}$-Riesz transforms.

Next aim is to state characterizations of the Banach spaces with the $UMD$ property by using imaginary powers $\mathcal{A}^{i\gamma}$, $\gamma \in \mathbb{R}\setminus\{0\}$, of $\mathcal{A}$. Salogni (\cite[Theorem 3.4.3]{Sa}) proved that, for every $1<p<\infty$ and $\gamma \in \mathbb{R}\setminus\{0\}$,
$$
\|\mathcal{A}^{i\gamma }\|_{L^p(\mathbb{R}^n,\gamma_{-1})\rightarrow L^p(\mathbb{R}^n,\gamma _{-1})}\sim e^{\phi^*_p|\gamma|},
$$
as $\gamma \rightarrow \infty$, when $\phi _p=\arcsin |\frac{2}{p}-1|$. Actually, $\mathcal{A}^{i\gamma}$ is a Laplace transform type multiplier associated with $\mathcal{A}$ defined by the function $\phi _\gamma (t)=\frac{t^{-i\gamma}}{\Gamma (1-i\gamma)}$, $t>0$, for every $\gamma \in \mathbb{R}\setminus\{0\}$. Then, since $\{T_t^\mathcal{A}\}_{t>0}$ is a Stein diffusion semigroup, the $L^p(\mathbb{R}^n,\gamma_{-1})$-boundedness of $\mathcal{A}^{i\gamma}$, $\gamma \in \mathbb{R}\setminus\{0\}$, follows from the general results established in \cite[Chapter III]{StLP}. Recently, Bruno (\cite[Theorem 4.1]{Br}) proved that $\mathcal{A}^{i\gamma}$, $\gamma \in \mathbb{R}\setminus\{0\}$, is bounded from $L^1(\mathbb{R}^n,\gamma_{-1})$ into $L^{1,\infty}(\mathbb{R}^n,\gamma_{-1})$.

Let $\gamma \in \mathbb{R}\setminus\{0\}$. We have that
$$
\mathcal{A}^{i\gamma}f=\sum_{k\in \mathbb{N}^n}(|k|+m)^{i\gamma}c_k(f)\widetilde{H}_k,\quad f\in L^2(\mathbb{R}^n,\gamma_{-1}).
$$
It is immediate to see that $\mathcal{A}^{i\gamma}$ is bounded from $L^2(\mathbb{R}^n,\gamma_{-1})$ into itself. For every $f\in L^2(\mathbb{R}^n,\gamma_{-1})\otimes X$ we define $\mathcal{A}^{i\gamma}$ in the obvious way when $X$ is a Banach space. In order to the operator $\mathcal{A}^{i\gamma}$ is bounded from $L^2(\mathbb{R}^n,\gamma_{-1})\otimes X$ into itself as subspace of $L^2(\mathbb{R}^n,\gamma_{-1},X)$ we need to impose some additional property to the Banach space $X$. For instance, if $X$ is isomorphic to a Hilbert space then $\mathcal{A}^{i\gamma}$ can be extended from $L^2(\mathbb{R}^n,\gamma_{-1})\otimes X$ to $L^2(\mathbb{R}^n,\gamma_{-1},X)$ as a bounded operator from $L^2(\mathbb{R}^n,\gamma_{-1},X)$ into itself. We are going to characterize the $UMD$ Banach spaces as those Banach spaces for which $\mathcal{A}^{i\gamma}$ can be extended from $(L^2(\mathbb{R}^n,\gamma_{-1})\cap L^p
(\mathbb{R}^n,\gamma_{-1}))\otimes X$ to $L^p(\mathbb{R}^n,\gamma_{-1},X)$ as a bounded operator from $L^p(\mathbb{R}^n,\gamma_{-1},X)$ into itself, when $1<p<\infty$, and from $L^1(\mathbb{R}^n,\gamma_{-1},X)$ into $L^{1,\infty}(\mathbb{R}^n,\gamma_{-1},X)$. Our result is motivated by the one in \cite[p. 402]{G} where $UMD$ Banach spaces are characterized by the $L^p(\mathbb{R},dx)$-boundedness properties of the imaginary power $(-\frac{d^2}{dx^2})^{i\gamma}$, $\gamma \in \mathbb{R}\setminus\{0\}$, of $-\frac{d^2}{dx^2}$. Guerre-Delabri\'ere's result was extended to higher dimensions by considering imaginary powers of the Laplacian in \cite[Proposition 1]{BCFR}. In \cite{BCCR} this kind of characterization for $UMD$ Banach spaces is obtained in Hermite and Laguerre settings. As far as we know this property has not been proved for the Ornstein-Uhlenbeck operator in the Gaussian framework.

\begin{thm}\label{Th1.6}
Let $\gamma \in \mathbb{R}\setminus\{0\}$. For every $f\in L^2(\mathbb{R}^n,\gamma_{-1})$, we have that
$$
\mathcal{A}^{i\gamma}(f)(x)=\lim_{\varepsilon \rightarrow 0^+}\Big(\int_{|x-y|>\varepsilon }K_\gamma ^\mathcal{A}(x,y)f(y)dy+\alpha (\varepsilon )f(x)\Big),\quad \mbox{ for almost all }x\in \mathbb{R}^n,
$$
where 
$$
K_\gamma ^\mathcal{A}(x,y)=-\int_0^\infty \phi _\gamma (t)\partial_tT_t^\mathcal{A}(x,y)dt,\quad x,y\in \mathbb{R}^n,\;x\not=y,
$$
and 
$$
\alpha (\varepsilon)=\frac{1}{\Gamma (\frac{n}{2})}\int_0^\infty \phi_\gamma \big(\frac{\varepsilon ^2}{4u}\big)e^{-u}u^{\frac{n}{2}-1}du,\quad \varepsilon \in (0,\infty),
$$
being $\phi_\gamma(t)=\frac{t^{-i\gamma }}{\Gamma (1-i\gamma )}$, $t\in (0,\infty)$.

Let $X$ be a Banach space. The following assertions are equivalent.

(i) $X$ is $UMD$.

(ii) For every $1<p<\infty$, $\mathcal{A}^{i\gamma}$ can be extended from $(L^2(\mathbb{R}^n,\gamma_{-1})\cap L^p(\mathbb{R}^n,\gamma_{-1}))\otimes X$ to $L^p(\mathbb{R}^n,\gamma_{-1},X)$ as a bounded operator from $L^p(\mathbb{R}^n,\gamma_{-1},X)$ into itself.

(iii) For some $1<p<\infty$, $\mathcal{A}^{i\gamma}$ can be extended from $(L^2(\mathbb{R}^n,\gamma_{-1})\cap L^p(\mathbb{R}^n,\gamma_{-1}))\otimes X$ to $L^p(\mathbb{R}^n,\gamma_{-1},X)$ as a bounded operator from $L^p(\mathbb{R}^n,\gamma_{-1},X)$ into itself.

(iv) $\mathcal{A}^{i\gamma}$ can be extended from $(L^2(\mathbb{R}^n,\gamma_{-1})\cap L^1(\mathbb{R}^n,\gamma_{-1}))\otimes X$ to $L^1(\mathbb{R}^n,\gamma_{-1},X)$ as a bounded operator from $L^1(\mathbb{R}^n,\gamma_{-1},X)$ into $L^{1,\infty}(\mathbb{R}^n,\gamma_{-1},X)$.

We define the maximal operator $\mathcal{A}^{i\gamma}_*$ by
$$
\mathcal{A}_*^{i\gamma}(f)(x)=\sup_{\varepsilon >0}\left\|\int_{|x-y|>\varepsilon}K_\gamma ^\mathcal{A}(x,y)f(y)dy\right\|,\quad f\in \bigcup_{p\geq 1}L^p(\mathbb{R}^n,\gamma _{-1},X).
$$
The following assertions are equivalent to (i).

(v) For every $1<p<\infty$, $\mathcal{A}_*^{i\gamma}$ is bounded from $L^p(\mathbb{R}^n,\gamma _{-1},X)$ into itself.

(vi) For some $1<p<\infty$, $\mathcal{A}_*^{i\gamma}$ is bounded from $L^p(\mathbb{R}^n,\gamma _{-1},X)$ into itself. 

(vii) $\mathcal{A}_*^{i\gamma}$ is bounded from $L^1(\mathbb{R}^n,\gamma _{-1},X)$ into $L^{1,\infty}(\mathbb{R}^n,\gamma _{-1},X)$.

(viii) For every $1\leq p<\infty$ and every $f\in L^p(\mathbb{R}^n,\gamma _{-1},X)$ there exists the limit 
$$
\lim_{\varepsilon \rightarrow  0^+}\left(\int_{|x-y|>\varepsilon}K_\gamma ^\mathcal{A}(x,y)f(y)dy+\alpha (\varepsilon)f(x)\right),\quad \mbox{ for almost all }x\in \mathbb{R}^n.
$$

(ix) For some $1\leq p<\infty$ and every $f\in L^p(\mathbb{R}^n,\gamma _{-1},X)$ there exists the limit 
$$
\lim_{\varepsilon \rightarrow  0^+}\left(\int_{|x-y|>\varepsilon}K_\gamma ^\mathcal{A}(x,y)f(y)dy+\alpha (\varepsilon)f(x)\right),\quad \mbox{ for almost all }x\in \mathbb{R}^n.
$$

(x) For every $1\leq p<\infty$ and every $f\in L^p(\mathbb{R}^n,\gamma _{-1},X)$, $\mathcal{A}_*^{i\gamma}(f)(x)<\infty$, for almost all $x\in \mathbb{R}^n$.

(xi) For some $1\leq p<\infty$ and every $f\in L^p(\mathbb{R}^n,\gamma _{-1},X)$, $\mathcal{A}_*^{i\gamma}(f)(x)<\infty$, for almost all $x\in \mathbb{R}^n$.
\end{thm}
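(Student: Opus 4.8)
The statement consists of an integral representation and a chain of equivalences, and I would treat them in that order. For the representation, the starting point is that $\mathcal A^{i\gamma}$ is a Laplace transform type multiplier of the diffusion semigroup: from $\lambda^{i\gamma}=-\int_0^\infty\phi_\gamma(t)\,\partial_t e^{-t\lambda}\,dt$ ($\lambda>0$) and the spectral theorem one gets, for $f\in L^2(\mathbb R^n,\gamma_{-1})$,
$$
\mathcal A^{i\gamma}f=-\lim_{\substack{\delta\to0^+\\ N\to\infty}}\int_\delta^N\phi_\gamma(t)\,\partial_tT_t^{\mathcal A}f\,dt\quad\text{in }L^2(\mathbb R^n,\gamma_{-1}).
$$
First I would record the kernel estimates: for $x\neq y$ the integral defining $K_\gamma^{\mathcal A}(x,y)$ converges absolutely, since $|\partial_tT_t^{\mathcal A}(x,y)|\lesssim t^{-n/2-1}\exp(-c|x-e^{-t}y|^2/t)$ for $0<t\le1$ while $T_t^{\mathcal A}(x,y)$ and its $t$-derivative decay like $e^{-nt}$ for $t\ge1$; and, splitting $K_\gamma^{\mathcal A}=K^{\mathrm{loc}}+K^{\mathrm{glob}}$ according to whether $t$ is small or large (equivalently, whether $|x-y|$ is below or above $c(1+|x|)^{-1}$), the part $K^{\mathrm{loc}}$ satisfies the Calderón--Zygmund bounds $|K^{\mathrm{loc}}(x,y)|\lesssim|x-y|^{-n}$, $|\nabla_{x,y}K^{\mathrm{loc}}(x,y)|\lesssim|x-y|^{-n-1}$, whereas $K^{\mathrm{glob}}$ carries the extra Gaussian off-diagonal decay underlying the scalar estimates of Salogni and Bruno.

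The core of the representation is the identification of the correction term. A change of variables $u=c\varepsilon^2/t$ shows that $\alpha(\varepsilon)=-\int_0^\infty\phi_\gamma(t)\,\partial_t\big(\int_{|x-y|<\varepsilon}W_t(x-y)\,dy\big)\,dt$, where $W_t$ is the Euclidean heat kernel that captures the short-time behaviour of $T_t^{\mathcal A}(x,y)$ on the diagonal. Hence, for $f\in C_c^\infty(\mathbb R^n)$, applying Fubini on $\{|x-y|>\varepsilon\}$, where all integrals are absolutely convergent,
$$
\int_{|x-y|>\varepsilon}K_\gamma^{\mathcal A}(x,y)f(y)\,dy+\alpha(\varepsilon)f(x)=\mathcal A^{i\gamma}f(x)-E_1(\varepsilon,x)-E_2(\varepsilon,x),
$$
where $E_1$ is a commutator between the $t$-integration and the spatial truncation, estimated via $|f(y)-f(x)|\lesssim|x-y|$ together with the gradient bound on the kernel, and $E_2$ collects the discrepancy between $T_t^{\mathcal A}$ and its model $W_t$ over $\{|x-y|<\varepsilon\}$, estimated by the smoothness of the Mehler kernel; one checks $E_j(\varepsilon,x)\to0$ as $\varepsilon\to0^+$ for every $x$. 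A density argument, using the $L^2$-boundedness of the (scalar) maximal operator $\mathcal A_*^{i\gamma}$ --- itself a consequence of the Calderón--Zygmund bounds above and the known $L^2$-boundedness of $\mathcal A^{i\gamma}$ through a Cotlar inequality --- then extends the identity to all $f\in L^2(\mathbb R^n,\gamma_{-1})$.

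For the equivalences, the forward implications (i)$\Rightarrow$(ii), (iv) again use the local/global split. The global operator, with kernel $K^{\mathrm{glob}}$, is bounded on $L^p(\mathbb R^n,\gamma_{-1},X)$ for $1\le p<\infty$ and from $L^1$ into $L^{1,\infty}$ for \emph{every} Banach space $X$, because the relevant estimates --- those already used in \cite{Sa,Br} --- only involve the Bochner norm of the kernel and are insensitive to $X$. The local operator is a vector-valued Calderón--Zygmund operator on the locally doubling metric measure space $(\mathbb R^n,|\cdot|,\gamma_{-1})$, so the vector-valued Calderón--Zygmund theory on such spaces yields $L^p(\gamma_{-1},X)$-boundedness for all $1<p<\infty$ and $L^1\to L^{1,\infty}$ as soon as it is bounded on \emph{some} $L^{p_0}(\gamma_{-1},X)$; this last a priori bound, for $X$ UMD, comes from the fact that $\mathcal A^{i\gamma}$, being a Laplace transform type multiplier of a symmetric diffusion semigroup, is bounded on $L^{p_0}(\gamma_{-1},X)$ by the UMD-valued Littlewood--Paley--Stein machinery, exactly as in the Hermite and Laguerre cases treated in \cite{BCCR}. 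The maximal operator $\mathcal A_*^{i\gamma}$ is controlled in parallel by a Cotlar-type pointwise inequality $\mathcal A_*^{i\gamma}f\lesssim M(\mathcal A^{i\gamma}f)+Mf+T_*^{\mathcal A}(\|f\|)+(\text{bounded global terms})$, with $M$ the locally doubling Hardy--Littlewood maximal operator and $T_*^{\mathcal A}$ Salogni's semigroup maximal operator, which gives (v)--(vii); then (viii)--(xi) follow by combining these maximal estimates with the a.e.\ convergence on the dense class $C_c^\infty(\mathbb R^n)\otimes X$ furnished by the representation.

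The converse implications are the part I expect to be hardest. I would run a localization/blow-up: fixing $x_0$ and rescaling a shrinking ball around it, $\gamma_{-1}$ becomes comparable to Lebesgue measure and $\mathcal A$ becomes, to leading order, $-\tfrac12\Delta$, and the kernel asymptotics $K_\gamma^{\mathcal A}(x,y)\to c_{n,\gamma}|x-y|^{-n-2i\gamma}$ make the $L^p(\gamma_{-1},X)$ (resp.\ weak type $(1,1)$) boundedness of $\mathcal A^{i\gamma}$ transfer to the corresponding boundedness of $(-\tfrac12\Delta)^{i\gamma}$ (equivalently $(-\Delta)^{i\gamma}$) on $L^p(\mathbb R^n,dx;X)$, which by \cite{G,BCFR} is equivalent to $X$ being UMD; concretely one tests $\mathcal A^{i\gamma}$ against $f_r(y)=g((y-x_0)/r)$ with $g\in C_c^\infty$ fixed and $r\to0^+$ and verifies that the rescaled $\mathcal A^{i\gamma}f_r$ converges to $(-\tfrac12\Delta)^{i\gamma}g$, the error coming from $K^{\mathrm{loc}}$ minus its homogeneous model plus the $K^{\mathrm{glob}}$ contribution. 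The weakest hypotheses --- (xi) and the a.e.-finiteness statements --- are reduced to the weak type bound in (vii) by a Nikishin--Stein argument performed on balls, where $\gamma_{-1}$ is finite, combined with the same localization. The genuinely delicate bookkeeping is (a) arranging the local/global split so that the Gaussian exponents make the global operator bounded for all $X$ while $K^{\mathrm{loc}}$ keeps clean Calderón--Zygmund bounds uniformly in $x$, and (b) obtaining the correct power of $r$ and the exact homogeneous limit in the blow-up; the remaining steps are a careful transcription of standard UMD-valued Calderón--Zygmund and Littlewood--Paley arguments to the inverse Gaussian measure.
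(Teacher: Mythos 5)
Your outline shares the paper's backbone --- a local/global splitting along $N_\beta$, the observation that the global part and the local discrepancy are governed by \emph{positive} kernel bounds that are blind to the Banach space, and the reduction of the UMD characterization to the known Euclidean results --- but it diverges at both ends. For the forward direction you get an a priori $L^{p_0}(\gamma_{-1},X)$ bound from a general UMD-valued multiplier theorem for diffusion semigroups and then run vector-valued Calder\'on--Zygmund theory; the paper never needs such machinery: it proves the two-sided comparison $|K_\gamma^{\mathcal A}(x,y)-K_\gamma(x-y)|\leq C\sqrt{1+|x|}\,|x-y|^{\frac12-n}$ on $N_\beta$, so that boundedness (strong, weak, maximal, and a.e.\ convergence of truncations) for $\mathcal A^{i\gamma}$ on $L^p(\gamma_{-1},X)$ and for $(-\tfrac{\Delta}{2})^{i\gamma}$ on $L^p(\mathbb R^n,dx,X)$ are transferred to each other \emph{in both directions} through the local transfer results of \cite{Sa} in their vector-valued form \cite{HTV}, the dilation-invariance argument of \cite[Theorem 1.10]{HTV}, and \cite[Theorem D]{TZ}, exactly as in the proofs of Theorems \ref{Th1.4} and \ref{Th1.5}. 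In particular the blow-up/rescaling argument and the Nikishin reduction you propose for the converse, while in principle workable (they amount to re-deriving the dilation and a.e.-convergence characterizations you could instead quote), are not needed, and they are the least developed part of your plan: you would have to control, uniformly in the scale, the error between $K_\gamma^{\mathcal A}$ and its homogeneous model together with the $\alpha(\varepsilon)$ term, and treat the weak-type and a.e.\ hypotheses separately. Also, your citation of \cite{BCCR} for the UMD-valued Laplace-transform-type multiplier theorem is loose; if you keep that route you should invoke (or prove) an actual vector-valued transference/dilation statement.

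Two technical points your sketch glosses over but which carry real content. First, since $|\phi_\gamma|$ is \emph{constant}, the global kernel cannot be handled by integrating $|\phi_\gamma|$ against pointwise bounds on $\partial_t T_t^{\mathcal A}$ in a naive way: the paper bounds $\int_0^\infty|\phi_\gamma(t)|\,|\partial_tT_t^{\mathcal A}(x,y)|\,dt$ by noting that, in the variable $r=e^{-t}$, the function $r^n(1-r^2)^{-n/2}e^{-|x-ry|^2/(1-r^2)}$ has a derivative that changes sign at most four times, so its total variation is comparable to its supremum, and only then do the estimates of \cite{MPS}, \cite{Sa} and \cite{BrSj} apply; your appeal to ``the scalar estimates of Salogni and Bruno'' hides precisely this step. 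Second, in identifying the correction term you should fix the normalization: with $W_t(z)=(2\pi t)^{-n/2}e^{-|z|^2/(2t)}$ the substitution you indicate produces $\phi_\gamma\big(\tfrac{\varepsilon^2}{2u}\big)$ rather than $\phi_\gamma\big(\tfrac{\varepsilon^2}{4u}\big)$ (a harmless unimodular factor, but it must be made consistent with the statement); the paper sidesteps this by importing the Euclidean representation directly from \cite{BCFR} and deriving the one for $\mathcal A^{i\gamma}$ from the kernel comparison, which is also how it justifies the passage from $C_c^\infty$ to all of $L^2(\mathbb R^n,\gamma_{-1})$ without a separate Cotlar-type inequality.
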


This paper is organized as follows. In Section 2 we study the negative power $\mathcal{A}^{-\beta}$, $\beta >0$, of $\mathcal{A}$. Higher order Riesz transforms in the inverse Gauss setting are considered in Section 3 where we prove Theorems \ref{Th1.1} and \ref{Th1.2}. Theorem \ref{Th1.3} is established in Section 4 and Theorems \ref{Th1.4} and \ref{Th1.5} in Section 5.  Finally, Section 6 is devoted to show the proof of Theorem \ref{Th1.6}.

Throughout this paper $C$ and $c$ denote positive constants that can change in each occurrence.

\section{Negative powers of $\mathcal{A}$.}

In this section we prove $L^p(\mathbb{R}^n,\gamma _{-1})$-boundedness properties of the negative powers $\mathcal{A}^{-\beta}$, $\beta >0$, of $\mathcal{A}$. These properties are different than the ones of the negative powers of the Ornstein-Uhlenbeck operator $\mathcal{L}=-\frac{1}{2}\Delta +x\nabla$. We prove that, for every $\beta >0$, $\mathcal{A}^{-\beta}$ defines a bounded operator from $L^1(\mathbb{R}^n,\gamma _{-1})$ into $L^{1,\infty}(\mathbb{R}^n,\gamma _{-1})$. However, in \cite[Proposition 6.2]{GCMST2} it was proved that if $\beta >0$, $\mathcal{L}^{-\beta}$ is not bounded from $L^1(\mathbb{R}^n,\gamma _1)$ into $L^{1,\infty}(\mathbb{R}^n,\gamma _1)$.

Let $\beta >0$. We define 
\begin{equation}\label{Abeta}
\mathcal{A}^{-\beta}(f)=\sum_{k\in \mathbb{N}^n}\frac{c_k(f)}{(|k|+n)^\beta}\widetilde{H}_k,\quad f\in L^2(\mathbb{R}^n,\gamma _{-1}).
\end{equation}

$\mathcal{A}^{-\beta}$ is bounded from $L^2(\mathbb{R}^n,\gamma _{-1})$ into itself. Moreover,  when $\beta >1$ the series also converges pointwisely in $\mathbb{R}^n$. Indeed, let $f\in L^2(\mathbb{R}^n,\gamma _{-1})$. We have that
$$
|c_k(f)|\leq \pi ^{\frac{n}{2}}\|f\|_{L^2(\mathbb{R}^n,\gamma _{-1})}\|\widetilde{H}_k\|_{L^2(\mathbb{R}^n,\gamma _{-1})}^{-1},\quad k\in \mathbb{N}^n.
$$
Also, for every $k\in \mathbb{N}^n$, (see \eqref{norma2}),
\begin{equation}\label{2.1}
\|\widetilde{H}_k\|_{L^2(\mathbb{R}^n,\gamma _{-1})}=\pi ^{\frac{n}{2}}2^{\frac{|k|}{2}}\Big(\prod_{j=1}^n\Gamma (k_j+1)\Big)^{\frac{1}{2}},
\end{equation}
and, according to \cite[p. 324]{San},
\begin{equation}\label{2.2}
|H_j(z)|\leq 2\sqrt{\Gamma (j+1)}2^{\frac{j}{2}}e^{\frac{z^2}{2}},\quad z\in \mathbb{R}\mbox{ and }j\in \mathbb{N}.
\end{equation}
Then, 
$$
|c_k(f)\widetilde{H}_k(x)|\leq 2^ne^{-\frac{|x|^2}{2}}\|f\|_{L^2(\mathbb{R}^n,\gamma_{-1})},\quad x\in \mathbb{R}^n,\;k\in \mathbb{N}^n,
$$
and if $\beta >1$ it follows that
$$
\sum_{k\in \mathbb{N}^n}\frac{|c_k(f)|}{(|k|+n)^\beta}|\widetilde{H}_k(x)|\leq Ce^{-\frac{|x|^2}{2}}\|f\|_{L^2(\mathbb{R}^n,\gamma _{-1})},\quad x\in \mathbb{R}^n.
$$

The series in \eqref{Abeta} converges pointwise absolutely for each $\beta >0$ when $f\in C_c^\infty (\mathbb{R}^n)$. Indeed, let $f\in C_c^\infty (\mathbb{R}^n)$. Partial integration allows us to see that, for every $r\in \mathbb{N}$ there exists $C=C(f,r)>0$ such that
\begin{equation}\label{2.3}
|c_k(f)|\leq \frac{C}{(|k|+n)^r\|\widetilde{H}_k\|_{L^2(\mathbb{R}^n,\gamma_{-1})}},\quad k\in \mathbb{N}^n.
\end{equation}
Then, 
$$
\sum_{k\in \mathbb{N}^n}\frac{|c_k(f)|}{(|k|+n)^\beta}|\widetilde{H}_k(x)|\leq Ce^{-\frac{|x|^2}{2}}\sum_{k\in \mathbb{N}^n}\frac{1}{(|k|+n)^{\beta +r}}<\infty .
$$

We also consider the operator defined, for every $f\in L^2(\mathbb{R}^n,\gamma_{-1})$, by
$$
S_\beta(f)=\frac{1}{\Gamma (\beta)}\int_0^\infty T_t^\mathcal{A}(f)t^{\beta -1}dt,
$$
where the integral is understood in the $L^2(\mathbb{R}^n,\gamma _{-1})$-Bochner sense.

For each $t>0$ we can write 
$$
T_t^\mathcal{A} (f)=\sum_{k\in \mathbb{N}^n}e^{-(|k|+n)t}c_k(f)\widetilde{H}_k,\quad f\in L^2(\mathbb{R}^n,\gamma_{-1}).
$$

We obtain
\begin{align*}
\|T_t^\mathcal{A}(f)\|_{L^2(\mathbb{R}^n,\gamma_{-1})}^2&=\sum_{k\in \mathbb{N}^n}e^{-2(|k|+n)t}|c_k(f)|^2\|\widetilde{H}_k\|_{L^2(\mathbb{R}^n,\gamma_{-1})}^2
\leq e^{-2nt}\sum_{k\in \mathbb{N}^n}|c_k(f)|^2\|\widetilde{H}_k\|_{L^2(\mathbb{R}^n,\gamma_{-1})}^2\\
&=e^{-2nt}\|f\|_{L^2(\mathbb{R}^n,\gamma_{-1})}^2,\quad f\in L^2(\mathbb{R}^n, \gamma_{-1}),\;t>0.
\end{align*}
Then,
\begin{align*}
\|S_\beta(f)\|_{L^2(\mathbb{R}^n,\gamma_{-1})}&\leq \frac{1}{\Gamma(\beta)}\int_0^\infty \|T_t^\mathcal{A}(f)\|_{L^2(\mathbb{R}^n,\gamma_{-1})}t^{\beta -1}dt\leq \frac{\|f\|_{L^2(\mathbb{R}^n,\gamma_{-1})}}{\Gamma(\beta)}\int_0^\infty e^{-nt}t^{\beta -1}dt\\
&\leq \frac{\|f\|_{L^2(\mathbb{R}^n,\gamma_{-1})}}{n^\beta},\quad f\in L^2(\mathbb{R}^n,\gamma_{-1}).
\end{align*}

Suppose now that $f\in C_c^\infty(\mathbb{R}^n)$. By using \eqref{2.3} we obtain that there exists $C=C(f)>0$ such that
$$
|T_t^\mathcal{A}(f)(x)|\leq Ce^{-\frac{|x|^2}{2}}e^{-nt},\quad x\in \mathbb{R}^n,\;t>0.
$$
We can write
\begin{align*}
S_\beta(f)(x)&=\frac{1}{\Gamma (\beta)}\sum_{k\in \mathbb{N}^n}c_k(f)\widetilde{H}_k(x)\int_0^\infty e^{-(|k|+n)t}t^{\beta -1}dt\\
&=\sum_{k\in \mathbb{N}^n}\frac{c_k(f)}{(|k|+n)^\beta}\widetilde{H}_k(x)=\mathcal{A}^{-\beta}(f)(x),\quad x\in \mathbb{R}^n.
\end{align*}
Since $C_c^\infty (\mathbb{R}^n)$ is dense in $L^2(\mathbb{R}^n,\gamma_{-1})$, $\mathcal{A}^{-\beta}(f)=S_\beta(f)$, $f\in L^2(\mathbb{R}^n,\gamma_{-1})$.

According to \cite[Theorem 2.5]{Br} we have that, for every $f\in C_c^\infty (\mathbb{R}^n)$
$$
\mathcal{A}^{-\beta}(f)(x)=\int_{\mathbb{R}^n}M_\beta (x,y)f(y)dy,
$$
for all $x$ outside the support of $f$, where
$$
M_\beta (x,y)=\frac{1}{\Gamma (\beta)}\int_0^\infty T_t^\mathcal{A}(x,y)t^{\beta -1}dt,\quad x,y\in \mathbb{R}^n,\;x\not =y.
$$

\begin{prop}\label{Prop2.1}
Let $\beta >0$. The operator $\mathcal{A}^{-\beta}$ can be extended from $L^p(\mathbb{R}^n, \gamma_{-1})\cap L^2(\mathbb{R}^n, \gamma_{-1})$ to $L^p(\mathbb{R}^n, \gamma_{-1})$ as a bounded operator from $L^p(\mathbb{R}^n, \gamma_{-1})$ into itself, when $1<p<\infty$, and from $L^1(\mathbb{R}^n, \gamma_{-1})$ into $L^{1,\infty}(\mathbb{R}^n, \gamma_{-1})$.
\end{prop}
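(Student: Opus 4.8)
The plan is to split the argument according to whether $p>1$ or $p=1$; the first case is soft and the second carries all the weight. For $1<p<\infty$ no singular integral theory is needed, only the spectral gap of $\mathcal A$. Since $\{T_t^{\mathcal A}\}_{t>0}$ is a diffusion semigroup in the Stein sense it is a contraction on $L^q(\mathbb R^n,\gamma_{-1})$ for every $1\le q\le\infty$, while $T_t^{\mathcal A}\widetilde H_k=e^{-(|k|+n)t}\widetilde H_k$ together with $\mathrm{spec}(\mathcal A)=\{n+m:m\in\mathbb N\}$ gives $\|T_t^{\mathcal A}\|_{L^2(\gamma_{-1})\to L^2(\gamma_{-1})}\le e^{-nt}$. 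Interpolating the $L^1$- and $L^\infty$-contractivity with this $L^2$-bound yields $\|T_t^{\mathcal A}\|_{L^p(\gamma_{-1})\to L^p(\gamma_{-1})}\le e^{-\delta_p t}$ with $\delta_p=2n\min\{1/p,1/p'\}>0$ for every $1<p<\infty$. For $f\in C_c^\infty(\mathbb R^n)$, for which $\mathcal A^{-\beta}f=\frac{1}{\Gamma(\beta)}\int_0^\infty t^{\beta-1}T_t^{\mathcal A}f\,dt$ has already been established, Minkowski's integral inequality then gives
$$
\|\mathcal A^{-\beta}f\|_{L^p(\gamma_{-1})}\le\frac{1}{\Gamma(\beta)}\int_0^\infty t^{\beta-1}\|T_t^{\mathcal A}f\|_{L^p(\gamma_{-1})}\,dt\le\delta_p^{-\beta}\,\|f\|_{L^p(\gamma_{-1})},
$$
and a density argument (using that $C_c^\infty(\mathbb R^n)$ is dense in $L^p(\mathbb R^n,\gamma_{-1})\cap L^2(\mathbb R^n,\gamma_{-1})$ and that $\mathcal A^{-\beta}$ is continuous on $L^2(\gamma_{-1})$) produces the bounded extension.

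For $p=1$ I would use the local/global splitting standard in Gaussian harmonic analysis. Reducing to $0\le f\in C_c^\infty(\mathbb R^n)$, Tonelli's theorem gives $\mathcal A^{-\beta}f(x)=\int_{\mathbb R^n}M_\beta(x,y)f(y)\,dy$ for every $x$, and since $M_\beta\ge0$ one writes $\mathcal A^{-\beta}=\mathcal A^{-\beta}_{\mathrm{loc}}+\mathcal A^{-\beta}_{\mathrm{glob}}$ according to $M_\beta=M_\beta\mathbf{1}_N+M_\beta\mathbf{1}_{N^c}$, where $N$ is a local region of the usual type, e.g. $N=\{(x,y):|x-y|\le C(1\wedge|x|^{-1})\}$, cf. \cite{BrSj}. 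On $N$ one has $\big||x|^2-|y|^2\big|=|x-y|\,|x+y|\le C$, so $e^{|x|^2}\sim e^{|y|^2}$ and $\gamma_{-1}$ is locally comparable to Lebesgue measure. Using the explicit expression of $T_t^{\mathcal A}(x,y)$ and splitting the $t$-integral around $t\sim|x-y|^2$ and $t\sim1$, I expect $M_\beta(x,y)\le C|x-y|^{2\beta-n}$ on $N$ when $2\beta<n$ (with a logarithmic factor when $2\beta=n$, and $M_\beta$ bounded when $2\beta>n$); then the Schur estimate $\sup_y e^{-|y|^2}\int_{\mathbb R^n}M_\beta(x,y)\mathbf{1}_N(x,y)e^{|x|^2}\,dx\le C\sup_y\int_{|x-y|\le C(1\wedge|y|^{-1})}|x-y|^{2\beta-n}\,dx<\infty$ shows that $\mathcal A^{-\beta}_{\mathrm{loc}}$ is bounded on $L^1(\mathbb R^n,\gamma_{-1})$, a fortiori into $L^{1,\infty}(\mathbb R^n,\gamma_{-1})$. (Alternatively, $\mathcal A^{-\beta}_{\mathrm{loc}}$ is $L^2$-bounded and a Calder\'on--Zygmund operator for the underlying local doubling structure, hence of weak type $(1,1)$.)

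The global part $\mathcal A^{-\beta}_{\mathrm{glob}}$ is the main obstacle, and it cannot be handled by any Schur-type argument: since $\int_{\mathbb R^n}T_t^{\mathcal A}(x,y)e^{|x|^2}\,dx=e^{|y|^2}$ for every $t>0$, one has $\int_{\mathbb R^n}M_\beta(x,y)e^{|x|^2}\,dx=\infty$, so $\mathcal A^{-\beta}$ is genuinely unbounded from $L^1(\gamma_{-1})$ into itself and the weak-type estimate must exploit the distribution function. Here I would follow the techniques developed in the Ornstein--Uhlenbeck setting by Sj\"ogren and by Garc\'ia-Cuerva, Mauceri, Meda, Sj\"ogren and Torrea, as adapted to the inverse Gaussian measure in \cite{Br} and \cite{BrSj}: pass to the variable $s=e^{-t}$, analyse the exponent $-|x-sy|^2/(1-s^2)$ to obtain a sharp pointwise estimate for $M_\beta(x,y)$ on $N^c$, and then carry out a covering argument over suitable annular and conical regions to establish the weak type $(1,1)$ with respect to $\gamma_{-1}$. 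This is exactly the step where the growth of the density $e^{|x|^2}$ at infinity works in our favour, producing behaviour opposite to that of $\mathcal L^{-\beta}$ in \cite[Proposition 6.2]{GCMST2}; I expect it to be, by a wide margin, the most technical portion of the proof.
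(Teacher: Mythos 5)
Your case $1<p<\infty$ is correct but follows a genuinely different route from the paper. The paper runs the Muckenhoupt-type local/global splitting for \emph{all} $p$: the local part is handled through the bound $M_\beta(x,y)\leq C|x-y|^{-(n-\varepsilon)}$ on $N$ together with the transference result \cite[Proposition 3.2.5]{Sa}, and the global part through the pointwise estimate \eqref{Mbeta} followed by a Schur test against $e^{(|x|^2-|y|^2)/q}$. You instead use only that $\{T_t^{\mathcal A}\}_{t>0}$ is a Stein diffusion semigroup (hence contractive on every $L^q(\gamma_{-1})$; indeed $\int_{\mathbb R^n}T_t^{\mathcal A}(x,y)e^{|x|^2}dx=e^{|y|^2}$ and $T_t^{\mathcal A}1=1$) with $L^2$-decay $e^{-nt}$, interpolate to get $\|T_t^{\mathcal A}\|_{L^p\to L^p}\leq e^{-\delta_p t}$, $\delta_p=2n\min\{1/p,1/p'\}$, and integrate; this is shorter, avoids all kernel estimates, and is perfectly valid for the proposition as stated. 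What the paper's heavier route buys is the explicit global estimate \eqref{Mbeta}, whose pattern is reused later (proofs of Theorems \ref{Th1.2} and \ref{Th1.3}). Your $p=1$ local part is essentially the paper's argument: your bound $M_\beta(x,y)\lesssim|x-y|^{2\beta-n}$ on $N$ (log factor or boundedness when $2\beta\geq n$) is the sharp form of the paper's $|x-y|^{-(n-\varepsilon)}$ with $0<\varepsilon<\min\{2\beta,n\}$, and the Schur test using $e^{|x|^2}\sim e^{|y|^2}$ on $N$ replaces the appeal to \cite[Proposition 3.2.5]{Sa}.

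The genuine gap is the weak type $(1,1)$ of the global part, which is the actual content of the proposition (it is precisely where $\mathcal A^{-\beta}$ differs from $\mathcal L^{-\beta}$ in \cite[Proposition 6.2]{GCMST2}). At that point you only declare an intention to ``follow the techniques'' of \cite{Br} and \cite{BrSj}: no pointwise estimate for $M_\beta$ on $N^c$ is derived, and nothing is checked against the hypotheses of the weak-type lemmas you would need. The paper carries this out concretely: after the change of variables $s=1-e^{-2t}$ it splits the integral at $s=1/2$, bounds the piece $s\in(0,\tfrac12)$ via \cite[Lemma 3.3.3]{Sa} by $Ce^{|y|^2-|x|^2}\min\{(1+|x|)^n,(|x|\sin\theta(x,y))^{-n}\}$, bounds the piece $s\in(\tfrac12,1)$ by the argument of \cite[Proposition 5.1]{BrSj} (separating $|y|\geq 2|x|$ from $|y|\leq 2|x|$ and writing $y=y_x+y_\perp$, $r_0=\tfrac{|y|}{|x|}\cos\theta(x,y)$), arrives at the kernel bound \eqref{2.4}, and only then concludes by \cite[Lemma 3.3.4]{Sa} and \cite[Lemmas 4.2 and 4.3]{BrSj}. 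Until you produce an estimate of the type \eqref{2.4} and verify that it is covered by those lemmas, the weak-type claim for $p=1$ remains unproved; your approach points in the right direction, but the decisive step is missing rather than merely routine.
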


\begin{proof}
We use the method consisting in decomposing the operator $\mathcal{A}^{-\beta}$ in two parts called local and global parts. This procedure of decomposition was employed by Muckenhoupt (\cite{Mu1} and \cite{Mu2}) in the Gaussian setting.

From now on we consider the function $m$ given by $m(x)=\min\{1,\frac{1}{|x|^2}\}$, $x\in \mathbb{R}^n\setminus\{0\}$, and $m(0)=1$ and the region $N$ defined by
$$
N=\left\{(x,y)\in \mathbb{R}^n\times \mathbb{R}^n:|x-y|\leq n\sqrt{m(x)}\right\}.
$$
We decompose $\mathcal{A}^{-\beta}$ as follows
$$
\mathcal{A}^{-\beta}=\mathcal{A}_{\rm loc}^{-\beta}+\mathcal{A}_{\rm glob}^{-\beta},
$$
where $\mathcal{A}_{\rm loc}^{-\beta}(f)(x)=\mathcal{A}^{-\beta}(f\chi _N(x,\cdot))(x)$, $x\in \mathbb{R}^n$.

According to \cite[Lemma 3.3.1]{Sa} we get
$$
M_\beta (x,y)\leq C\left(\int_0^{m(x)}\frac{e^{-\frac{c|x-y|^2}{t}}}{t^{\frac{n}{2}}}t^{\beta -1}dt+\int_{m(x)}^\infty \frac{e^{-nt}}{(1-e^{-2t})^{\frac{n}{2}}}t^{\beta -1}dt\right),\quad (x,y)\in N.
$$
By choosing $0<\varepsilon <\min\{2\beta,n\}$ we obtain
\begin{align*}
M_\beta(x,y)&\leq C\left(\int_0^{m(x)}\frac{e^{-\frac{c|x-y|^2}{t}}}{t^{\frac{n-\varepsilon}{2}}}t^{\beta -\frac{\varepsilon}{2}-1}dt+\int_{m(x)}^\infty \frac{dt}{t^{\frac{n-\varepsilon}{2}+1}}\right)\\
&\leq C\left(\frac{1}{|x-y|^{n-\varepsilon}}\int_0^1t^{\beta -\frac{\varepsilon}{2}-1}dt+m(x)^{-\frac{n-\varepsilon}{2}}\right)\leq \frac{C}{|x-y|^{n-\varepsilon}},\quad (x,y)\in N.
\end{align*}
In the last inequality we have taken into account that $\sqrt{m(x)}\sim \frac{1}{1+|x|}$, $x\in \mathbb{R}^n$, and that $(1+|x|)|x-y|\leq C$, provided that $(x,y)\in N$.

We have that
\begin{align*}
\sup_{x\in \mathbb{R}^n}\int_{\mathbb{R}^n}M_\beta(x,y)\chi_N(x,y)dy&\leq C\sup_{x\in \mathbb{R}^n}\int_{|x-y|\leq n\sqrt{m(x)}}\frac{dy}{|x-y|^{n-\varepsilon}}\\
&\leq C\sup_{x\in \mathbb{R}^n}\int_0^{n\sqrt{m(x)}}r^{\varepsilon -1}dr\leq C\sup_{x\in \mathbb{R}^n}m(x)^{\frac{\varepsilon}{2}}<\infty.
\end{align*}

Also, since $\sqrt{m(x)}\sim \frac{1}{1+|x|}\sim\frac{1}{1+|y|}\sim \sqrt{m(y)}$, $(x,y)\in N$,
$$
\sup_{y\in \mathbb{R}^n}\int_{\mathbb{R}^n}M_\beta(x,y)\chi_N(x,y)dx<\infty.
$$
Hence, the operator $M_{\beta ,{\rm loc}}$ defined by
$$
M_{\beta ,{\rm loc}}(f)(x)=\int_{\mathbb{R}^n}M_\beta(x,y)\chi_N(x,y)f(y)dy,\quad x\in \mathbb{R}^n,
$$
is bounded from $L^p(\mathbb{R}^n,dx)$ into itself, for every $1\leq p<\infty$. Since $M_{\beta ,{\rm loc}}$ is a local operator, by using \cite[Proposition 3.2.5]{Sa} we deduce that $M_{\beta, {\rm loc}}$ is bounded from $L^p(\mathbb{R}^n,\gamma_{-1})$ into itself, for every $1\leq p<\infty$.

Suppose that $f,g\in C_c^\infty (\mathbb{R}^n)$. As above we take $0<\varepsilon <\min\{2\beta, n\}$ and we get
\begin{align*}
\int_{\mathbb{R}^n}|f(x)|\int_{\mathbb{R}^n}\int_0^\infty t^{\beta -1}T_t^\mathcal{A}(x,y)\chi _N(x,y)|g(y)|dtdydx\leq C\int_{\mathbb{R}^n}|f(x)|\int_{\mathbb{R}^n}|g(y)|\frac{\chi _N(x,y)}{|x-y|^{n-\varepsilon}}dydx<\infty .
\end{align*}
Then,
$$
\int_{\mathbb{R}^n}f(x)\mathcal{A}_{\rm loc}^{-\beta}(g)(x)dx=\int_{\mathbb{R}^n}f(x)M_{\beta ,{\rm loc}}(g)(x)dx.
$$

We deduce that $\mathcal{A}_{\rm loc}^{-\beta}(g)(x)=M_{\beta,{\rm loc}}(g)(x)$, for almost all $x\in \mathbb{R}^n$. Since $\mathcal{A}_{\rm loc}^{-\beta}$ and $M_{\beta,{\rm loc}}$ are bounded from $L^2(\mathbb{R}^n,\gamma_{-1})$ into itself, $\mathcal{A}_{\rm loc}^{-\beta}(f)=M_{\beta,{\rm loc}}(f)$, $f\in L^2(\mathbb{R}^n,\gamma_{-1})$. It follows that, for every $1\leq p<\infty$, $\mathcal{A}_{\rm loc}^{-\beta}$ can be extended from $L^2(\mathbb{R}^n,\gamma_{-1})\cap L^p(\mathbb{R}^n,\gamma_{-1})$ to $L^p(\mathbb{R}^n,\gamma_{-1})$ as a bounded operator from $L^p(\mathbb{R}^n,\gamma_{-1})$ into itself.

We now study the operator $M_{\beta,{\rm glob}}$ defined by
$$
M_{\beta,{\rm glob}}(f)(x)=\int_{\mathbb{R}^n}M_\beta (x,y)\chi_{N^c}(x,y)f(y)dy,\quad x\in \mathbb{R}^n.
$$
By making the change of variables $s=1-e^{-2t}$, $t\in (0,\infty)$, and taking into account that $|x-ry|^2=|y-rx|^2+(1-r^2)(|x|^2-|y|^2)$, $x,y\in \mathbb{R}^n$, $r\in \mathbb{R}$, we obtain
$$
M_\beta (x,y)=\frac{\pi ^{-\frac{n}{2}}}{2^\beta\Gamma (\beta)}e^{|y|^2-|x|^2}\int_0^1\frac{e^{-\frac{|y-x\sqrt{1-s}|^2}{s}}}{s^{\frac{n}{2}}}(1-s)^{\frac{n}{2}-1}(-\log (1-s))^{\beta -1}ds,\quad x,y\in \mathbb{R}^n.
$$

We now use some notations that were introduced in \cite{PeSo} and proceed as in the proof of \cite[Proposition 2.2]{PeSo}. For every $x,y\in \mathbb{R}^n$ we define 
\begin{equation}\label{uPeSo}
a=|x|^2+|y|^2,\quad b=2\langle x,y\rangle,\quad  s_0=2\frac{\sqrt{a^2-b^2}}{a+\sqrt{a^2-b^2}}\quad  \mbox{and}\quad u(s)=\frac{|y-x\sqrt{1-s}|^2}{s}, \;\;s\in (0,1). 
\end{equation}
Assume that $(x,y)\not \in N$. Suppose first that $b\leq 0$. In this case, $u(s)\geq \frac{a}{s}-|x|^2$, $s\in (0,1)$. Furthermore, $s\leq C(-\log (1-s))\leq C(1-s)^{-1/(4\beta)}$, $s\in (0,1)$. Then, 
$$
M_\beta (x,y)\leq Ce^{|y|^2}\int_0^1\frac{e^{-\frac{a}{s}}}{s^{\frac{n}{2}+1}(1-s)^{\frac{3}{4}}}ds.
$$
By making the change of variable $r=\frac{1}{s}-1$, $s\in (0,1)$, and taking into account that $a\geq \frac{1}{2}$ we obtain 
$$
M_\beta (x,y)\leq Ce^{-|x|^2}\int_0^\infty \frac{e^{-\frac{r}{2}}(1+r)^{\frac{n}{2}-\frac{1}{4}}}{r^{\frac{3}{4}}}dr\leq Ce^{-|x|^2}.
$$

Suppose now $b>0$. We write $u_0=u(s_0)=\frac{|y|^2-|x|^2}{2}+\frac{|x+y||x-y|}{2}$. By using that 
$$
\sup_{s\in (0,1)}\frac{e^{-u(s)}}{s^{\frac{n}{2}}}\sim \frac{e^{-u_0}}{s_0^{n/2}},
$$ 
we have that
$$
M_\beta (x,y)\leq Ce^{|y|^2-|x|^2}\sup_{s\in (0,1)}\frac{e^{-u(s)}}{s^{\frac{n}{2}}}\int_0^1(1-s)^{\frac{n}{2}-1}(-\log (1-s))^{\beta -1}ds\leq Ce^{|y|^2-|x|^2}\frac{e^{-u_0}}{s_0^{n/2}}.
$$
Since $s_0\sim \frac{\sqrt{a^2-b^2}}{a}$ we conclude that, when $(x,y)\not \in N$,
\begin{equation}\label{Mbeta}
M_\beta (x,y)\leq C\left\{
\begin{array}{ll}
e^{-|x|^2},&\mbox{ if }\langle x,y\rangle \leq 0,\\
\displaystyle \Big(\frac{|x+y|}{|x-y|}\Big)^{\frac{n}{2}}\exp\Big(\frac{|y|^2-|x|^2}{2}-\frac{|x-y||x+y|}{2}\Big),&\mbox{ if }\langle x,y\rangle >0.
\end{array}
\right.
\end{equation}

Let $1<q<\infty$. Since $||y|^2-|x|^2|\leq |x+y||x-y|$, $x,y\in \mathbb{R}^n$, and $|x-y||x+y|\geq n$, when $(x,y)\in N^c$, as in \cite[p. 501]{Pe} we obtain
\begin{align*}
\int_{\mathbb{R}^n}e^{\frac{|x|^2}{q}-\frac{|y|^2}{q}}M_\beta (x,y)\chi_{N^c}(x,y)dy&\leq C\left(\int_{\mathbb{R}^n}e^{-|x|^2(1-\frac{1}{q})}e^{-\frac{|y|^2}{q}}dy\right.\\
&\quad +\left.\int_{\mathbb{R}^n}|x+y|^ne^{-|x+y||x-y|(\frac{1}{2}-|\frac{1}{q}-\frac{1}{2}|)}dy\right)\leq C,\quad x\in \mathbb{R}^n.
\end{align*}

Also, we have that
$$
\sup_{y\in \mathbb{R}^n}\int_{\mathbb{R}^n}e^{\frac{|x|^2}{q}-\frac{|y|^2}{q}}M_\beta (x,y)\chi_{N^c}(x,y)dx<\infty .
$$
We deduce that $M_{\beta ,{\rm glob}}$ is bounded from $L^p(\mathbb{R}^n,\gamma_{-1})$ into itself, for every $1<p<\infty$. 

Next, we are going to see that $M_{\beta ,{\rm glob}}$ is bounded from $L^1(\mathbb{R}^n,\gamma_{-1})$ into $L^{1,\infty}(\mathbb{R}^n,\gamma_{-1})$. We decompose $M_\beta (x,y)$, $x,y\in \mathbb{R}^n$, as follows 
$$
M_\beta (x,y)=\frac{\pi ^{-\frac{n}{2}}}{2^\beta \Gamma (\beta)}e^{|y|^2-|x|^2}\left(\int_0^{\frac{1}{2}}+\int_{\frac{1}{2}}^1\right)\frac{e^{-\frac{|y-x\sqrt{1-s}|^2}{s}}}{s^{\frac{n}{2}}}(1-s)^{\frac{n}{2}-1}(-\log (1-s))^{\beta -1}ds=I_1(x,y)+I_2(x,y).
$$

For every $x,y\in \mathbb{R}^n\setminus\{0\}$ we denote by $\theta (x,y)\in [0,\pi ]$ the angle between $x$ and $y$ (we understand $\theta (x,y)=0$, when $n=1$). By using \cite[Lemma 3.3.3]{Sa} we get that, for every $(x,y)\in N^c$, $x,y\not=0$,
\begin{align*}
I_1(x,y)&\leq Ce^{|y|^2-|x|^2}\sup_{s\in (0,1)}\frac{e^{-\frac{|y-x\sqrt{1-s}|^2}{s}}(1-s)^{\frac{n}{2}}}{s^{\frac{n}{2}}}\int_0^{\frac{1}{2}}\frac{(-\log (1-s))^{\beta -1}}{1-s}ds\\
&\leq Ce^{|y|^2-|x|^2}\sup_{r\in (0,1)}\frac{e^{-\frac{|(1+r)y-(1-r)x|^2}{r}}(1-r)^n}{r^{\frac{n}{2}}}\leq Ce^{|y|^2-|x|^2}\min\{(1+|x|)^n,(|x|\sin \theta (x,y))^{-n}\}.
\end{align*}
On the other hand, by proceeding as in \cite[Proposition 5.1]{BrSj} we estimate $I_2(x,y)$, $(x,y)\in N^c$. We first observe that
\begin{align*}
I_2(x,y)&\leq Ce^{|y|^2-|x|^2}\int_{\frac{1}{2}}^1e^{-|y-x\sqrt{1-s}|^2}(1-s)^{\frac{n}{2}-1}(-\log (1-s))^{\beta -1}ds\quad x,y\in \mathbb{R}^n.
\end{align*}
If $|y|\geq 2|x|$ then $|y-x\sqrt{1-s}|\geq 3|y|/4$, $s\in (1/2,1)$, and it follows that
$$
I_2(x,y)\leq Ce^{-c|y|^2}e^{|y|^2-|x|^2}\int_{\frac{1}{2}}^1(1-s)^{\frac{n}{2}-1}(-\log (1-s))^{\beta -1}ds\leq C\frac{e^{|y|^2-|x|^2}}{|y|^{n-1}}\leq C\frac{e^{|y|^2-|x|^2}}{|x|^{n-1}},\quad x,y\not=0.
$$
If $x,y\in \mathbb{R}^n\setminus\{0\}$ we define $r_0=\frac{|y|}{|x|}\cos \theta (x,y)$ and we write $y=y_x+y_\bot$, where $y_x$ is parallel to $x$ and $y_\bot$ is orthogonal to $y$.

By making the change of variables $r=\sqrt{1-s}$ and since $|y-rx|^2=|r-r_0|^2|x|^2+|y_\bot|^2$, it follows that, when  if $|y|\leq 2|x|$,
\begin{align*}
I_2(x,y)&\leq Ce^{-c|y_\bot|^2}e^{|y|^2-|x|^2}\int_0^{\frac{1}{2}}(|r-r_0|^{n-1}+|r_0|^{n-1})(-\log r)^{\beta -1}e^{-c|r-r_0|^2|x|^2}dr\\
&\leq Ce^{-c|y_\bot|^2}e^{|y|^2-|x|^2}\left(\frac{1}{|x|^{n-1}}+\Big(\frac{|y|}{|x|}\Big)^{n-1}\right)\int_0^{\frac{1}{2}}(-\log r)^{\beta -1}dr\\
&\leq Ce^{-c|y_\bot|^2}e^{|y|^2-|x|^2}\left (\frac{1}{|x|^{n-1}}+|x|\Big(\frac{|y|}{|x|}\Big)^{n-1}\right),\quad (x,y)\in N^c.
\end{align*}
In the last inequality we have used that $|x|\geq C$ when $(x,y)\in N^c$ and $2|x|\geq |y|$.

By combining the above estimates we obtain
\begin{align}\label{2.4}
M_\beta (x,y)&\leq Ce^{|y|^2-|x|^2}\Big(\min\big\{(1+|x|)^n,(|x|\sin \theta (x,y))^{-n}\big\}+\frac{1}{|x|^{n-1}}\\ \nonumber
&\quad+e^{-c|y_\bot|^2}|x|\Big(\frac{|y|}{|x|}\Big)^{n-1}\chi_{\{|y|\leq 2|x|\}}(x,y)
\Big),\quad (x,y)\in N^c.
\end{align}
According to \cite[Lemma 3.3.4]{Sa} and \cite[Lemmas 4.2 and 4.3]{BrSj} we can see that the operator $M_{\beta ,{\rm glob}}$ is bounded from $L^1(\mathbb{R}^n,\gamma_{-1})$ into $L^{1,\infty}(\mathbb{R}^n,\gamma_{-1})$.

The estimation \eqref{2.4} allows us to prove that, for every $g\in C_c^\infty (\mathbb{R}^n)$, $\mathcal{A}_{\rm glob}^{-\beta}(f)(x)=M_{\beta, {\rm glob}}(f)(x)$, for almost all $x\in \mathbb{R}^n$. Then, since $\mathcal{A}_{\rm glob}^{-\beta}$ and $M_{\beta, {\rm glob}}$ are bounded from $L^2(\mathbb{R}^n,\gamma_{-1})$ into itself, $\mathcal{A}_{\rm glob}^{-\beta}(f)=M_{\beta, {\rm glob}}(f)$, $f\in L^2(\mathbb{R}^n,\gamma_{-1})$. Hence, $\mathcal{A}_{\rm glob}^{-\beta}$ can be extended from $L^2(\mathbb{R}^n,\gamma_{-1})\cap L^p(\mathbb{R}^n,\gamma_{-1})$ to $L^p(\mathbb{R}^n,\gamma_{-1})$ as a bounded operator from $L^p(\mathbb{R}^n,\gamma_{-1})$ into itself, when $1<p<\infty$, and from $L^1(\mathbb{R}^n,\gamma_{-1})$ into $L^{1,\infty}(\mathbb{R}^n,\gamma_{-1})$. 

Thus the proof is finished.
\end{proof}

\section{Higher order Riesz transforms associated with the operator $\mathcal{A}$}

In this section we prove Theorems \ref{Th1.1} and \ref{Th1.2} concerning to the higher order Riesz transforms in the inverse Gaussian setting.

\subsection{Proof of Theorem \ref{Th1.1}.} Let $f\in C_c^\infty (\mathbb{R}^n)$. If $\ell\in \mathbb{N}^n$ we have that
$$
\partial_x ^\ell T_t^\mathcal{A}(x,y)=(-1)^{|\ell |}\frac{e^{-nt}e^{-\frac{|x-e^{-t}y|^2}{1-e^{-2t}}}}{\pi^{\frac{n}{2}}(1-e^{-2t})^{\frac{n+|\ell|}{2}}}H_\ell \Big(\frac{x-e^{-t}y}{\sqrt{1-e^{-2t}}}\Big),\quad x,y\in \mathbb{R}^n,\;t>0,
$$
and then
\begin{equation}\label{derivT}
|\partial _x^\ell T_t^\mathcal{A}(x,y)|\leq C\frac{e^{-nt}e^{-c\frac{|x-e^{-t}y|^2}{1-e^{-2t}}}}{(1-e^{-2t})^{\frac{n+|\ell|}{2}}},\quad x,y\in \mathbb{R}^n,\;t>0.
\end{equation}

Suppose that $k\in \mathbb{N}$ and $\ell \in \mathbb{N}^n$ such that $|\ell|<k$. Then,
\begin{equation}\label{3.1}
\int_{\mathbb{R}^n}|f(y)|\int_0^\infty \big|\partial_x^\ell T_t^\mathcal{A}(x,y)\big|t^{\frac{k}{2}-1}dtdy<\infty,\quad x\in \mathbb{R}^n.
\end{equation}

Indeed, by considering the function $m$ defined in Proposition \ref{Prop2.1} and using \eqref{derivT} and \cite[Lemma 3.3.1]{Sa} we obtain, for $\varepsilon \in (0,1)$, 
\begin{align*}
\int_0^\infty \big|\partial ^\ell _xT_t^\mathcal{A}(x,y)\big|t^{\frac{k}{2}-1}dt&\leq C\left(\int_0^{m(x)}\frac{e^{-c\frac{|x-y|^2}{t}}}{t^{\frac{n+|\ell|-k}{2}+1}}dt+\int_{m(x)}^\infty \frac{e^{-nt}}{(1-e^{-2t})^{\frac{n+|\ell|}{2}}}t^{\frac{k}{2}-1}dt\right)\\
&\hspace{-4cm}\leq C\left(\frac{1}{|x-y|^{n-\varepsilon}}\int_0^1t^{\frac{k-|\ell|-\varepsilon}{2}-1}dt+\int_{m(x)}^\infty \frac{dt}{t^{\frac{n+|\ell|-k}{2}+1}}\right)\leq C\left(\frac{1}{|x-y|^{n-\varepsilon}}+\frac{1}{m(x)^{\frac{n+|\ell|-k}{2}}}\right)\\
&\hspace{-4cm}\leq  C\left(\frac{1}{|x-y|^{n-\varepsilon}}+\frac{1}{m(x)^{\frac{n-\varepsilon}{2}}}\right)\leq \frac{C}{|x-y|^{n-\varepsilon}},\quad (x,y)\in N.
\end{align*}
On the other hand, by reading the proof of \cite[Lemma 3.3.3]{Sa} we deduce that
\begin{align}\label{3.2}
\int_0^\infty \big|\partial_x^\ell T_t^\mathcal{A}(x,y)\big|t^{\frac{k}{2}-1}dt&\leq Ce^{|y|^2-|x|^2}\sup_{t\in (0,\infty)}\frac{e^{-c\frac{|y-e^{-t}x|^2}{1-e^{-2t}}}}{(1-e^{-2t})^{\frac{n+|\ell|}{2}}}\int_0^\infty e^{-nt}t^{\frac{k}{2}-1}dt\nonumber\\
&\leq Ce^{|y|^2-|x|^2}(1+|x|)^{n+|\ell|},\quad (x,y)\in N^c.
\end{align}
Note that this estimation also holds when $|\ell|=k$.

Since $f\in C_c^\infty (\mathbb{R}^n)$, \eqref{3.1} holds. Hence, according to \cite[Lemma 4.2]{BFRT}, we have that, when $k\in \mathbb{N}$, $\ell \in \mathbb{N}^n$ and $|\ell|<k$,
\begin{equation}\label{derivA}
\partial_x^\ell\mathcal{A}^{-k/2}(f)(x)=\frac{1}{\Gamma (\frac{k}{2})}\int_{\mathbb{R}^n}f(y)\int_0^\infty \partial _x^\ell T_t^\mathcal{A}(x,y)t^{\frac{k}{2}-1}dtdy,\quad \mbox{ for almost all }x\in \mathbb{R}^n.
\end{equation}

We assume $\alpha =(\alpha _1,...,\alpha _n)\in \mathbb{N}^n\setminus\{0\}$, so we can suposse without loss of generality that $\alpha _1\geq 1$. Let us take $\ell =(\alpha _1-1,\alpha_2,...,\alpha _n)$. According to \eqref{derivA} we can write
$$
\partial_x^\alpha \mathcal{A}^{-|\alpha|/2}(f)(x)=\partial_{x_1}\left(\frac{1}{\Gamma (\frac{|\alpha|}{2})}\int_{\mathbb{R}^n}f(y)\int_0^\infty \partial _x^\ell T_t^\mathcal{A}(x,y)t^{\frac{|\alpha|}{2}-1}dtdy\right),\quad \mbox{for almost }x\in \mathbb{R}^n.
$$
Assume first $n>1$ and write 
$$
\int_{\mathbb{R}^n}f(y)\int_0^\infty \partial _x^\ell T_t^\mathcal{A}(x,y)t^{\frac{|\alpha|}{2}-1}dtdy=F(x)+G(x),\quad x\in \mathbb{R}^n,
$$
where
$$
F(x)=\int_{\mathbb{R}^n}f(y)\int_0^\infty \partial _x^\ell [T_t^\mathcal{A}(x,y)-W_t(x-y)]t^{\frac{|\alpha|}{2}-1}dtdy,\quad x\in \mathbb{R}^n,
$$
and
$$
G(x)=\int_{\mathbb{R}^n}f(y)\int_0^\infty \partial _x^\ell W_t(x-y)t^{\frac{|\alpha|}{2}-1}dtdy,\quad x\in \mathbb{R}^n.
$$
Here $W_t$, $t>0$, denotes the classical heat kernel $W_t(z)=\frac{e^{-|z|^2/(2t)}}{(2\pi t)^{n/2}}$, $z\in \mathbb{R}^n$.

Next we show that 
\begin{equation}\label{derivF}
\partial_{x_1}F(x)=\int_{\mathbb{R}^n}f(y)\int_0^\infty \partial _x^\alpha [T_t^\mathcal{A}(x,y)-W_t(x-y)]t^{\frac{|\alpha|}{2}-1}dtdy,\quad \mbox{for almost }x\in \mathbb{R}^n.
\end{equation}

By taking into account \eqref{derivT} we get 
\begin{align}\label{dif1}
\int_{m(x)}^\infty \big|\partial _x^\alpha [T_t^\mathcal{A}(x,y)-W_t(x-y)]\big|t^{\frac{|\alpha|}{2}-1}dt&\leq C\int_{m(x)}^\infty \left(\frac{e^{-nt}t^{\frac{|\alpha|}{2}-1}}{(1-e^{-2t})^{\frac{n+|\alpha|}{2}}}+\frac{e^{-c\frac{|x-y|^2}{t}}}{t^{\frac{n}{2}+1}}\right)dt\\\nonumber
&\leq C\int_{m(x)}^\infty \frac{dt}{t^{\frac{n}{2}+1}}\leq \frac{C}{m(x)^{\frac{n}{2}}}\leq C(1+|x|)^n,\quad x,y\in \mathbb{R}^n.
\end{align}

Also by using \eqref{3.2} we obtain, 
\begin{align}\label{dif2}
\int_0^{m(x)} \left|\partial _x^\alpha [T_t^\mathcal{A}(x,y)-W_t(x-y)]\right|t^{\frac{|\alpha|}{2}-1}dt&\leq C\left(e^{|y|^2-|x|^2}(1+|x|)^{n+|\alpha|}+\int_0^{m(x)} \frac{e^{-c\frac{|x-y|^2}{t}}}{t^{\frac{n}{2}+1}}dt\right)\\\nonumber
&\hspace{-5cm}\leq C\left(e^{|y|^2-|x|^2}(1+|x|)^{n+|\alpha|}+\frac{1}{|x-y|^{n+|\alpha|}}\right)\leq Ce^{|y|^2}(1+|x|)^{n+|\alpha|},\quad (x,y)\in N^c.
\end{align}
Now we are going to estimate
$$
I(x,y)=\int_0^{m(x)}\ \left|\partial _x^\alpha [T_t^\mathcal{A}(x,y)-W_t(x-y)]\right|t^{\frac{|\alpha|}{2}-1}dt,\quad (x,y)\in N.
$$
We can write 
\begin{align}\label{decomposition}
\partial _x^\alpha [T_t^\mathcal{A}(x,y)-W_t(x-y)]&=\frac{(-1)^{|\alpha|}}{\pi ^{\frac{n}{2}}}\left(\frac{e^{-nt}}{(1-e^{-2t})^{\frac{n+|\alpha|}{2}}}\widetilde{H}_\alpha\left(\frac{x-e^{-t}y}{\sqrt{1-e^{-2t}}}\right)-\frac{1}{(2t)^{\frac{n+|\alpha|}{2}}}\widetilde{H}_\alpha\left(\frac{x-y}{\sqrt{2t}}\right)\right)\nonumber\\
&\hspace{-4cm}=\frac{(-1)^{|\alpha|}}{\pi ^{\frac{n}{2}}}\left\{\frac{e^{-nt}-1}{(1-e^{-2t})^{\frac{n+|\alpha|}{2}}}\widetilde{H}_\alpha\left(\frac{x-e^{-t}y}{\sqrt{1-e^{-2t}}}\right)+\left(\frac{1}{(1-e^{-2t})^{\frac{n+|\alpha|}{2}}}-\frac{1}{(2t)^{\frac{n+|\alpha|}{2}}}\right)\widetilde{H}_\alpha\left(\frac{x-e^{-t}y}{\sqrt{1-e^{-2t}}}\right)\right.\nonumber\\
&\hspace{-4cm}\quad \left.+\frac{1}{(2t)^{\frac{n+|\alpha|}{2}}}\left(\widetilde{H}_\alpha\left(\frac{x-e^{-t}y}{\sqrt{1-e^{-2t}}}\right)-\widetilde{H}_\alpha \left(\frac{x-y}{\sqrt{2t}}\right)\right)\right\}=\sum_{j=1}^3 I_j(t,x,y),\quad x,y\in \mathbb{R}^n.
\end{align}
Then
$$
I(x,y)\leq \sum_{j=1}^3\int_0^{m(x)}|I_j(t,x,y)|t^{\frac{|\alpha|}{2}-1}dt=\sum_{j=1}^3I_j(x,y),\quad x,y\in \mathbb{R}^n.
$$
 
By proceeding as in the proof of \cite[Lemma 3.3.1]{Sa} we can see that
\begin{equation}\label{3.3}
\frac{e^{-c\frac{|x-e^{-t}y|^2}{1-e^{-2t}}}}{(1-e^{-2t})^{\frac{n}{2}}}\leq C\frac{e^{-c\frac{|x-y|^2}{t}}}{t^{\frac{n}{2}}},\quad (x,y)\in N,\;t\in (0,1).
\end{equation}
Then, by taking into account that
\begin{equation}\label{differences}
|e^{-nt}-1|\leq Ct\quad \mbox{ and }\quad \left|\frac{1}{(1-e^{-2t})^{\frac{n+|\alpha|}{2}}}-\frac{1}{(2t)^{\frac{n+|\alpha|}{2}}}\right|\leq \frac{C}{t^{\frac{n+|\alpha|}{2}-1}},\quad t\in (0,1),
\end{equation}
it follows by using \eqref{3.3} that
$$
I_1(x,y)+I_2(x,y)\leq C\int_0^{m(x)}\frac{e^{-c\frac{|x-y|^2}{t}}}{t^{\frac{n}{2}}}dt\leq \frac{C}{|x-y|^{n-\frac{1}{2}}}\int_0^1\frac{dt}{t^{\frac{1}{4}}}\leq \frac{C}{|x-y|^{n-\frac{1}{2}}},\quad (x,y)\in N.
$$

Let us analyze the term $I_3(x,y)$, $(x,y)\in N$. For every $z=(z_1,...,z_n)$, $w=(w_1,...,w_n)\in \mathbb{R}^n$ we write
\begin{align*}
\widetilde{H}_\alpha (z)-\widetilde{H}_\alpha (w)&=\prod_{j=1}^n \widetilde{H}_{\alpha_j}(z_j)-\prod_{j=1}^n\widetilde{H}_{\alpha _j}(w_j)\\
&=\sum_{k=1}^n\left(\prod_{j=1}^{k-1} \widetilde{H}_{\alpha_j}(w_j)\Big( \widetilde{H}_{\alpha_k}(z_k)- \widetilde{H}_{\alpha_k}(w_k)\Big)\prod_{j=k+1}^n \widetilde{H}_{\alpha_j}(z_j)\right).
\end{align*}

Let $(x,y)\in N$, $t\in (0,1)$, and consider $z=\frac{x-e^{-t}y}{\sqrt{1-e^{-2t}}}$ and $w=\frac{x-y}{\sqrt{2t}}$. By taking into account that $(1+|x|+|y|)|x-y|\leq C$ it follows that $e^{-c|z|^2}\leq Ce^{-c|w|^2}$ and then, for each $k=1,...,n$,
$$
\prod_{j=1}^{k-1} |\widetilde{H}_{\alpha_j}(w_j)|\prod_{j=k+1}^n |\widetilde{H}_{\alpha_j}(z_j)|\leq C\exp\Big(-c\sum_{j=1, j\not=k}^n\frac{|x_j-y_j|^2}{t}\Big),
$$
and, by considering also \eqref{derivH} and using the mean value theorem we get
\begin{align*}
\Big|\widetilde{H}_{\alpha_k}(z_k)- \widetilde{H}_{\alpha_k}(w_k)\Big|&\leq Ce^{-c\frac{|x_k-y_k|^2}{t}}|z_k-w_k|\leq Ce^{-c\frac{|x_k-y_k|^2}{t}}\left(\frac{(1-e^{-t})|y_k|}{\sqrt{1-e^{-2t}}}+|x_k-y_k|\Big|\frac{1}{\sqrt{1-e^{-2t}}}-\frac{1}{\sqrt{2t}}\Big|\right)\\
&\hspace{-2cm}\leq Ce^{-c\frac{|x_k-y_k|^2}{t}}\sqrt{t}(|y_k|+|x_k-y_k|)\leq Ce^{-c\frac{|x_k-y_k|^2}{t}}\sqrt{t}(|x_k|+|x_k-y_k|)\leq Ce^{-c\frac{|x_k-y_k|^2}{t}}\sqrt{t}(1+|x|). 
\end{align*}
Then, 
\begin{equation}\label{Halpha}
\left|\widetilde{H}_\alpha \left(\frac{x-e^{-t}y}{\sqrt{1-e^{-2t}}}\right)-\widetilde{H}_\alpha \left(\frac{x-y}{2t}\right)\right|\leq Ce^{-c\frac{|x-y|^2}{t}}\sqrt{t}(1+|x|),\quad (x,y)\in N,\;t\in (0,1),
\end{equation}
and thus,
\begin{align*}
I_3(x,y)&\leq C(1+|x|)\int_0^{m(x)}\frac{e^{-c\frac{|x-y|^2}{t}}}{t^{\frac{n+1}{2}}}dt\leq C\frac{1+|x|}{|x-y|^{n-\frac{1}{2}}}\int_0^{m(x)}t^{-\frac{3}{4}}dt\\
&=C\frac{(1+|x|)m(x)^{\frac{1}{4}}}{|x-y|^{n-\frac{1}{2}}}\leq C\frac{\sqrt{1+|x|}}{|x-y|^{n-\frac{1}{2}}},\quad (x,y)\in N.
\end{align*}
We deduce that 
\begin{equation}\label{acotI}
I(x,y)\leq C\frac{\sqrt{1+|x|}}{|x-y|^{n-\frac{1}{2}}},\quad (x,y)\in N.
\end{equation}
This estimation, jointly with \eqref{dif1} and \eqref{dif2}, leads to
\begin{align*}
\int_{\mathbb{R}^n}|f(y)|\int_0^\infty \left|\partial _x^\alpha [T_t^\mathcal{A}(x,y)-W_t(x-y)]\right|t^{\frac{|\alpha|}{2}-1}dtdy&\nonumber\\
&\hspace{-6cm}\leq C\int_{\mathbb{R}^n}|f(y)|\Big(e^{|y|^2}(1+|x|)^{n+|\alpha|}+\frac{\sqrt{1+|x|}}{|x-y|^{n-\frac{1}{2}}}\Big)dy \leq C(1+|x|)^{n+|\alpha|}<\infty,\quad x\in \mathbb{R}^n,
\end{align*}
where we have used that $f$ has compact support. According to \cite[Lemma 4.2]{BFRT} \eqref{derivF} is then established.

We are going to evaluate $\partial _{x_1}G(x)$. We write $G(x)=\int_{\mathbb{R}^n}f(x-y)\Phi(y)dy$, $x\in \mathbb{R}^n$, where
$$
\Phi (z)=\int_0^\infty \partial _z^\ell (W_t(z))t^{\frac{|\alpha|}{2}-1}dt,\quad z\in \mathbb{R}^n.
$$

Since $n>1$ we have that
$$
|\Phi (z)|\leq C\int_0^\infty \frac{e^{-c\frac{|z|^2}{t}}}{t^{\frac{n+1}{2}}}dt\leq \frac{C}{|z|^{n-1}},\quad z\in \mathbb{R}^n\setminus\{0\}.
$$
According to \cite[Lemma 4.2]{BFRT} we can derivate under the integral sign obtaining
$$
\partial_{x_1}G(x)=\int_{\mathbb{R}^n}\partial _{x_1}(f(x-y))\Phi (y)dy=-\int_{\mathbb{R}^n}\partial_{y_1}(f(x-y))\Phi (y)dy,\quad \mbox{ for almost all }x\in \mathbb{R}^n,
$$
where the last integral is absolutely convergent.

For every $z=(z_1,...,z_n)\in \mathbb{R}^n$  we define $\overline{z}=(z_2,...,z_n)$. Partial integration leads to
\begin{align*}
\int_{\mathbb{R}^n}\partial_{y_1}(f(x-y))\Phi (y)dy&=\lim_{\varepsilon \rightarrow 0^+}\int_{|y|>\varepsilon}\partial_{y_1}(f(x-y))\Phi (y)dy\\
&=\lim_{\varepsilon \rightarrow 0^+}\left(\int_{|\overline{y}|<\varepsilon}\int_{-\infty}^{-\sqrt{\varepsilon ^2-|\overline{y}|^2}}+\int_{|\overline{y}|<\varepsilon}\int_{\sqrt{\varepsilon ^2-|\overline{y}|^2}}^{+\infty}+\int_{|\overline{y}|>\varepsilon}\int_\mathbb{R}\right)\partial_{y_1}(f(x-y))\Phi (y)dy_1d\overline{y}\\
&=\lim_{\varepsilon \rightarrow 0^+}\left(-\int_{|y|>\varepsilon}f(x-y)\partial _{y_1}\Phi (y)dy+\int_{|\overline{y}|<\varepsilon}f(x-y)\Phi (y)\Big]_{y_1=-\infty}^{y_1=-\sqrt{\varepsilon ^2-|\overline{y}|^2}}d\overline{y}\right.\\
&\quad +\left.\int_{|\overline{y}|<\varepsilon}f(x-y)\Phi (y)\Big]_{y_1=\sqrt{\varepsilon ^2-|\overline{y}|^2}}^{y_1=+\infty}d\overline{y}\right)\\
&=\lim_{\varepsilon \rightarrow 0^+}\left(-\int_{|\overline{y}|>\varepsilon}f(x-y)\partial_{y_1}\Phi (y)dy+J_\varepsilon (x)\right),\quad x\in \mathbb{R}^n,
\end{align*}
where
\begin{align*}
J_\varepsilon (x)&=\int_{|\overline{y}|<\varepsilon}f(x_1+\sqrt{\varepsilon ^2-|\overline{y}|^2},\overline{x}-\overline{y})\Phi (-\sqrt{\varepsilon ^2-|\overline{y}|^2},\overline{y})d\overline{y}\\
&\quad -\int_{|\overline{y}|<\varepsilon}f(x_1-\sqrt{\varepsilon ^2-|\overline{y}|^2},\overline{x}-\overline{y})\Phi (\sqrt{\varepsilon ^2-|\overline{y}|^2},\overline{y})d\overline{y},\quad x\in \mathbb{R}^n.
\end{align*}

Let us estimate $\lim_{\varepsilon \rightarrow 0^+}J_\varepsilon (x)$, $x\in \mathbb{R}^n$. We recall that $\Phi$ can be written as follows
$$
\Phi (z)=\frac{(-1)^{|\alpha|-1}}{2^{\frac{|\ell|}{2}}(2\pi)^{\frac{n}{2}}}\int_0^\infty \widetilde{H}_\ell \Big(\frac{z}{\sqrt{2t}}\Big)\frac{dt}{t^{\frac{n+1}{2}}},\quad z\in \mathbb{R}^n.
$$
Suppose now that $\alpha _1$ is odd. Then,
$$
\Phi (-\sqrt{\varepsilon ^2-|\overline{y}|^2},\overline{y})=\Phi (\sqrt{\varepsilon ^2-|\overline{y}|^2},\overline{y}),\quad \overline{y}\in \mathbb{R}^{n-1},\;|\overline{y}|<\varepsilon.
$$
We have that, for every $x\in \mathbb{R}^n$ and $\varepsilon>0$,
\begin{align*}
J_\varepsilon (x)&=\int_{|\overline{y}|<\varepsilon}\big(f(x_1+\sqrt{\varepsilon ^2-|\overline{y}|^2},\overline{x}-\overline{y})-f(x_1-\sqrt{\varepsilon ^2-|\overline{y}|^2},\overline{x}-\overline{y})\big)\Phi (\sqrt{\varepsilon ^2-|\overline{y}|^2},\overline{y})d\overline{y}\\
&=\varepsilon ^{n-1}\int_{|\overline{z}|<1}\big(f(x_1+\varepsilon\sqrt{1-|\overline{z}|^2},\overline{x}-\varepsilon \overline{z})-f(x_1-\varepsilon\sqrt{1-|\overline{z}|^2},\overline{x}-\varepsilon\overline{z})\big)\Phi (\varepsilon\sqrt{1-|\overline{z}|^2},\varepsilon\overline{z})d\overline{z}.
\end{align*}
On the other hand, by performing the change of variable $s=\frac{\varepsilon ^2}{2t}$, we get
\begin{align*}
\Phi (\varepsilon\sqrt{1-|\overline{z}|^2},\varepsilon\overline{z})&=\frac{(-1)^{|\alpha|-1}}{2^{\frac{|\ell|}{2}}(2\pi)^{\frac{n}{2}}}\int_0^\infty \widetilde{H}_{\alpha _1-1}\Big(\frac{\varepsilon\sqrt{1-|\overline{z}|^2}}{\sqrt{2t}}\Big)\prod_{i=2}^n\widetilde{H}_{\alpha _i}\Big(\frac{\varepsilon z_i}{\sqrt{2t}}\Big)\frac{dt}{t^{\frac{n+1}{2}}}\\
&=\frac{(-1)^{|\alpha|-1}\varepsilon^{1-n}}{2^{\frac{|\alpha|}{2}}\pi^{\frac{n}{2}}}\int_0^\infty \widetilde{H}_{\alpha _1-1}(\sqrt{s(1-|\overline{z}|^2)})\prod_{i=2}^n\widetilde{H}_{\ell _i}(z_i\sqrt{s})s^{\frac{n-3}{2}}ds,\quad \varepsilon >0,\;\overline{z}\in \mathbb{R}^{n-1},\;|z|<1.
\end{align*}
It follows that
\begin{align*}
J_\varepsilon (x)&=\frac{(-1)^{|\alpha|-1}}{2^{\frac{|\alpha|}{2}}\pi^{\frac{n}{2}}}\int_{|\overline{z}|<1}\big(f(x_1+\varepsilon\sqrt{1-|\overline{z}|^2},\overline{x}-\varepsilon \overline{z})-f(x_1-\varepsilon\sqrt{1-|\overline{z}|^2},\overline{x}-\varepsilon \overline{z})\big)\\
&\quad \times\int_0^\infty H_{\alpha _1-1}(\sqrt{s(1-|\overline{z}|^2)})\prod_{i=2}^nH_{\ell _i}(z_i\sqrt{s})e^{-s}s^{\frac{n-3}{2}}ds d\overline{z},\quad x\in \mathbb{R}^n,\;\varepsilon >0.
\end{align*}
Then, by using the dominated convergence theorem we obtain
$$
\lim_{\varepsilon \rightarrow 0^+}J_\varepsilon (x)=0,\quad x\in \mathbb{R}^n.
$$

Suppose now that $\alpha _1$ is even. Then,
$$
\Phi (-\varepsilon\sqrt{1-|\overline{y}|^2},\overline{y})=-\Phi (\varepsilon\sqrt{1-|\overline{y}|^2},\overline{y}),\quad \varepsilon >0,\;\overline{y}\in \mathbb{R}^{n-1},\;|\overline{y}|<\varepsilon,
$$
and proceeding as above we get
\begin{align*}
J_\varepsilon (x)&=\frac{(-1)^{|\alpha|}}{2^{\frac{|\alpha|}{2}}\pi^{\frac{n}{2}}}\int_{|\overline{z}|<1}\big(f(x_1+\varepsilon\sqrt{1-|\overline{z}|^2},\overline{x}-\varepsilon \overline{z})+f(x_1-\varepsilon\sqrt{1-|\overline{z}|^2},\overline{x}-\varepsilon \overline{z})\big)\\
&\quad \times \int_0^\infty H_{\alpha _1-1}(\sqrt{s(1-|\overline{z}|^2)})\prod_{i=2}^nH_{\alpha _i}(z_i\sqrt{s})e^{-s}s^{\frac{n-3}{2}}dsd\overline{z},\quad x\in \mathbb{R}^n,\;\varepsilon >0.
\end{align*}
It follows that
$$
\lim_{\varepsilon \rightarrow 0^+}J_\varepsilon (x)=\frac{(-1)^{|\alpha|}}{2^{\frac{|\alpha|}{2}-1}\pi^{\frac{n}{2}}}f(x)\int_{|\overline{z}|<1}\int_0^\infty H_{\alpha _1-1}(\sqrt{s(1-|z|^2)})\prod_{i=2}^{n-1}H_{\alpha _i}(z_i\sqrt{s})e^{-s}s^{\frac{n-3}{2}}dsd\overline{z}=-c_\alpha f(x),\quad x\in \mathbb{R}^n.
$$
We note that if $\alpha _i$  is odd for some $i=2,...,n$, then $c_\alpha =0$. Thus, we conclude that
\begin{align}\label{derivG}
\partial _{x_1}G(x)&=\lim_{\varepsilon \rightarrow 0^+}\int_{|x-y|>\varepsilon}f(y)\partial _{x_1}\Phi (x-y)dy+c_\alpha f(x)\nonumber\\
&\quad =\lim_{\varepsilon \rightarrow 0^+}\int_{|x-y|>\varepsilon}f(y)\int_0^\infty \partial _x^\alpha W_t(x-y)t^{\frac{|\alpha|}{2}-1}dtdy+c_\alpha f(x),\quad \mbox{ for almost all }x\in \mathbb{R}^n,
\end{align}
where $c_\alpha=0$ when $\alpha_i$ is odd for some $i=1,...,n$.

We have obtained
\begin{align*}
\partial_x^{\alpha}\mathcal{A}^{-|\alpha|/2}(f)(x)&=\frac{1}{\Gamma (\frac{|\alpha|}{2})}\partial_{x_1}(F(x)+G(x))\\
&=\frac{1}{\Gamma (\frac{|\alpha|}{2})}\left(\int_{\mathbb{R}^n}f(y)\int_0^\infty \partial _x^\alpha [T_t^\mathcal{A}(x,y)-W_t(x-y)]t^{\frac{|\alpha|}{2}-1}dtdy\right.\\
&\quad \left.+\lim_{\varepsilon \rightarrow 0^+}\int_{|x-y|>\varepsilon}f(y)\int_0^\infty \partial _x^\alpha W_t(x-y)t^{\frac{|\alpha|}{2}-1}dtdy+c_\alpha f(x)\right)\\
&=\frac{1}{\Gamma (\frac{|\alpha|}{2})}\lim_{\varepsilon \rightarrow 0^+}\int_{|x-y|>\varepsilon }f(y)\int_0^\infty \partial _x^{\alpha }T_t^\mathcal{A}(x,y))t^{\frac{|\alpha|}{2}-1}dtdy+c_\alpha f(x),\quad \mbox{ for almost all }x\in \mathbb{R}^n. 
\end{align*}

We now deal with the case $n=1$. We have $\alpha \in \mathbb{N}$, $\alpha \geq 1$. According to \eqref{derivA} we can write 
$$
\frac{d^\alpha}{dx^\alpha}\mathcal{A}^{-\alpha /2}(f)(x)=\frac{1}{\Gamma (\frac{\alpha}{2})}\frac{d}{dx}\int_{\mathbb{R}^n}f(y)\int_0^\infty \partial_x^{\alpha -1}T_t^\mathcal{A}(x,y)t^{\frac{\alpha}{2}-1}dtdy=\frac{1}{\Gamma (\frac{\alpha}{2})}\frac{d}{dx}(\overline{F}(x)+\overline{G}(x)),\quad x\in \mathbb{R},
$$
where
$$
\overline{F}(x)=\int_\mathbb{R}f(y)\int_0^\infty\Big[\partial _x^{\alpha -1}T_t^\mathcal{A}(x,y)-\Big((\frac{d^{\alpha -1}}{dx^{\alpha -1}}W_t)(x-y)- (\frac{d^{\alpha -1}}{dx^{\alpha -1}}W_t)(0)\chi_{(1,\infty)}(t)\Big)\Big]t^{\frac{\alpha}{2}-1}dtdy,\quad x\in \mathbb{R},
$$
and 
$$
\overline{G}(x)=\int_\mathbb{R}f(y)\int_0^\infty \Big((\frac{d^{\alpha -1}}{dx^{\alpha -1}}W_t)(x-y)- (\frac{d^{\alpha -1}}{dx^{\alpha -1}}W_t)(0)\chi_{(1,\infty)}(t)\Big)t^{\frac{\alpha}{2}-1}dtdy,\quad x\in \mathbb{R}.
$$
Note that if $\alpha $ is even, then $(\frac{d^{\alpha -1}}{dx^{\alpha -1}}W_t)(0)=0$.

By proceeding as in the case of $n>1$ and taking into account that
\begin{align*}
\left|(\frac{d^{\alpha -1}}{dx^{\alpha -1}}W_t)(x-y)- (\frac{d^{\alpha -1}}{dx^{\alpha -1}}W_t)(0)\right|\leq \frac{C}{t^{\frac{\alpha}{2}}}\Big|\widetilde{H}_{\alpha -1}\big(\frac{x-y}{\sqrt{2t}}\big)-\widetilde{H}_{\alpha -1}(0)\Big|\leq C\frac{|x-y|}{t^{\frac{\alpha +1}{2}}},\quad x,y\in \mathbb{R},\;t>1,
\end{align*}
we can see that the integral defining $\overline{F}(x)$ is absolutely convergent for every $x\in \mathbb{R}$ and
$$
\frac{d}{dx}\overline{F}(x)=\int_\mathbb{R}f(y)\int_0^\infty \partial _x^\alpha [T_t^\mathcal{A}(x,y)-W_t(x-y)]t^{\frac{\alpha}{2}-1}dtdy,
$$
being also this integral absolutely convergent for almost $x\in \mathbb{R}$.

On the other hand, by considering
\begin{align*}
\overline{\Phi }(y)&=\int_0^\infty \Big((\frac{d^{\alpha -1}}{dx^{\alpha -1}}W_t)(x-y)- (\frac{d^{\alpha -1}}{dx^{\alpha -1}}W_t)(0)\chi_{(1,\infty)}(t)\Big)t^{\frac{\alpha}{2}-1}dt\\
&=\frac{(-1)^{\alpha -1}}{2^{\frac{\alpha }{2}}\sqrt{\pi}}\int_0^\infty \Big(\widetilde{H}_{\alpha-1}\Big(\frac{y}{\sqrt{2t}}\Big)-\widetilde{H}_{\alpha -1}(0)\chi_{(1,\infty )}(t)\Big)\frac{dt}{t},\quad y\in \mathbb{R}\setminus\{0\},
\end{align*}
we can write
$$
\overline{G}(x)=\int_\mathbb{R}f(x-y)\overline{\Phi}(y)dy,\quad x\in \mathbb{R}^n,
$$
and according to \cite[Lemma 4.2]{BFRT} we get
$$
\frac{d}{dx}\overline{G}(x)=-\int_\mathbb{R}\partial_y (f(x-y))\overline{\Phi}(y)dy\\
=-\lim_{\varepsilon \rightarrow 0^+}\int_{|y|>\varepsilon}\partial_y(f(x-y))\overline{\Phi}(y)dy,\quad \mbox{ for almost all }x\in \mathbb{R}.
$$

By partial integration we obtain
$$
\int_{|y|>\varepsilon}\partial_y(f(x-y))\overline{\Phi }(y)dy=-\int_{|y|>\varepsilon}f(x-y)\overline{\Phi}'(y)dy+f(x+\varepsilon )\overline{\Phi}(-\varepsilon)-f(x-\varepsilon)\overline{\Phi}(\varepsilon),\quad \varepsilon >0,\;x\in \mathbb{R}^n.
$$
Then
$$
\frac{d}{dx}\overline{G}(x)=\lim_{\varepsilon \rightarrow 0^+}\left(\int_{|y|>\varepsilon}f(x-y)\overline{\Phi}'(y)dy+\overline{J}_\varepsilon(x)\right),\quad \mbox{ for almost all }x\in \mathbb{R},
$$
where
$$
\overline{J}_\varepsilon(x)=f(x-\varepsilon)\overline{\Phi}(\varepsilon)-f(x+\varepsilon )\overline{\Phi}(-\varepsilon),\quad \varepsilon >0,\;x\in \mathbb{R}.
$$
By taking into account \eqref{derivH} and that $\widetilde{H}_\alpha (z)\leq Ce^{-cz^2}$, $z\in \mathbb{R}$, it follows that
$$
\overline{\Phi}'(y)=\frac{(-1)^{\alpha }}{2^{\frac{\alpha +1}{2}}\sqrt{\pi}}\int_0^\infty \widetilde{H}_\alpha \Big(\frac{y}{\sqrt{2t}}\Big)t^{-\frac{3}{2}}dt=\int_0^\infty (\frac{d^\alpha}{dx^\alpha}W_t)(y)t^{\frac{\alpha}{2}-1}dt,\quad y\in \mathbb{R}\setminus\{0\}.
$$

On the other hand we have that
\begin{align*}
\overline{\Phi}(y)&=\frac{(-1)^{\alpha-1}}{2^{\frac{\alpha}{2}}\sqrt{\pi}}\left(\int_0^1\widetilde{H}_{\alpha-1}\Big(\frac{y}{\sqrt{2t}}\Big)\frac{dt}{t}+\int_1^\infty \Big(\widetilde{H}_{\alpha -1}\Big(\frac{y}{\sqrt{2t}}\Big)-\widetilde{H}_{\alpha-1}(0)\Big)\frac{dt}{t}\right)\\
&=\frac{(-1)^{\alpha-1}}{2^{\frac{\alpha }{2}}\sqrt{\pi}}\left(\int_{\frac{y^2}{2}}^\infty \widetilde{H}_{\alpha -1}(\sqrt{s})\frac{ds}{s}+\int_0^{\frac{y^2}{2}}(\widetilde{H}_{\alpha-1}(\sqrt{s})-\widetilde{H}_{\alpha-1}(0))\frac{ds}{s}\right),\quad y\in \mathbb{R}\setminus\{0\}.
\end{align*}
Let $x\in \mathbb{R}$. Since $f\in C_c^\infty (\mathbb{R}^n)$ we get
$$
|f(x+\varepsilon)-f(x-\varepsilon)|\leq C\varepsilon,\quad \varepsilon >0.
$$
If $\alpha$ is odd, then $\Phi$ is even and, for every $\varepsilon \in (0,1)$,
\begin{align*}
|\overline{J}_\varepsilon(x)|&=|(f(x+\varepsilon)-f(x-\varepsilon))\overline{\Phi}(\varepsilon)|\leq C\varepsilon \left(\int_{\frac{\varepsilon ^2}{2}}^\infty |\widetilde{H}_{\alpha-1}(\sqrt{s})|\frac{ds}{s}+\int_0^{\frac{\varepsilon ^2}{2}}|\widetilde{H}_{\alpha -1}(\sqrt{s})-\widetilde{H}_{\alpha-1}(0)|\frac{ds}{s}\right)\\
&\leq C\varepsilon \left(\int_1^\infty e^{-s}ds+\int_{\frac{\varepsilon ^2}{2}}^1 \frac{ds}{s}+\int_0^{\frac{\varepsilon ^2}{2}}\sqrt{s}\frac{ds}{s}\right)\leq C\varepsilon \big(1+|\log\varepsilon|+\varepsilon \big).
\end{align*}
Hence, if $\alpha$ is odd, $\lim_{\varepsilon \rightarrow 0^+}\overline{J}_\varepsilon(x)=0$.

If $\alpha$ is even, then $\overline{\Phi}$ is odd, $H_{\alpha-1}(0)=0$, and
$$
\overline{\Phi }(\varepsilon)=-\frac{1}{2^{\frac{\alpha}{2}}\sqrt{\pi}}\int_0^\infty \widetilde{H}_{\alpha-1}(\sqrt{s})\frac{ds}{s},\quad \varepsilon >0.
$$
We obtain
$$
\lim_{\varepsilon \rightarrow 0^+}\overline{J}_\varepsilon(x)=-\frac{f(x)}{2^{\frac{\alpha}{2}-1}\sqrt{\pi}}\int_0^\infty \widetilde{H}_{\alpha-1}(\sqrt{s})\frac{ds}{s},
$$
provided that $\alpha$ is even.

We conclude that, for certain $c_\alpha \in \mathbb{R}$
$$
\frac{d}{dx}G(x)=\lim_{\varepsilon \rightarrow 0^+}\int_{|x-y|>\varepsilon}f(y)\int_0^\infty(\frac{d^\alpha}{dx^\alpha}W_t)(x-y)t^{\frac{\alpha}{2}-1}dtdy+c_\alpha f(x),\quad \mbox{for almost all }x\in \mathbb{R}.
$$
Then, we get 
$$
\frac{d^\alpha}{dx^\alpha}\mathcal{A}^{-\alpha/2}(f)(x)=\frac{1}{\Gamma (\frac{\alpha}{2})}\lim_{\varepsilon \rightarrow 0^+}\int_{|x-y|>\varepsilon }f(y)\int_0^\infty \partial_x^\alpha T_t^\mathcal{A}(x,y)t^{\frac{\alpha}{2}-1}dtdy+c_\alpha f(x),\quad \mbox{for almost all }x\in \mathbb{R},
$$
where $c_\alpha=0$ when $\alpha$ is odd.

Finally we observe that when $\alpha$ is even
\begin{align*}
\int_0^\infty (\frac{d^\alpha}{dz^\alpha}W_t)(z)t^{\frac{\alpha}{2}-1}dt&=\frac{1}{2^{\frac{\alpha +1}{2}}\sqrt{\pi}}\int_0^\infty e^{-\frac{z^2}{2t}}H_\alpha \Big(\frac{|z|}{\sqrt{2t}}\Big)t^{-\frac{3}{2}}dt=\frac{1}{2^{\frac{\alpha }{2}-1}\sqrt{\pi}}\frac{1}{|z|}\int_0^\infty e^{-s^2}H_\alpha (s)ds=0.
\end{align*}
By taking into account the arguments developed in this proof we can see that
$$
\int_{\mathbb{R}}\left|f(y)\int_0^\infty \partial_x^\alpha T_t^\mathcal{A}(x,y)t^{\frac{\alpha}{2}-1}dt\right|dy=\int_{\mathbb{R}}\left|f(y)\int_0^\infty \partial_x^\alpha \big(T_t^\mathcal{A}(x,y)-W_t(x-y)\big)t^{\frac{\alpha}{2}-1}dt\right|dy<\infty,
$$
for every $x\in \mathbb{R}$. Then, 
$$
\frac{d^\alpha}{dx^\alpha}\mathcal{A}^{-\alpha/2}(f)(x)=\frac{1}{\Gamma (\frac{\alpha}{2})}\int_{\mathbb{R}}f(y)\int_0^\infty \partial_x^\alpha T_t^\mathcal{A}(x,y)t^{\frac{\alpha}{2}-1}dtdy+c_\alpha f(x),\quad \mbox{ for almost all }x\in \mathbb{R}.
$$

The proof of Theorem \ref{Th1.1} is completed.

\subsection{Proof of Theorem \ref{Th1.2}.}
For every $f\in C_c^\infty (\mathbb{R}^n)$ we have that
$$
\mathcal{A}^{-|\alpha|/2}(f)(x)=\sum_{k\in \mathbb{N}^n}\frac{c_k(f)}{(|k|+n)^{\frac{|\alpha|}{2}}}\widetilde{H}_k(x),\quad x\in \mathbb{R}^n,
$$
and according to \eqref{2.1}, \eqref{2.2} and \eqref{2.3} the last series is pointwise absolutely convergent, it defines a smooth function on $\mathbb{R}^n$ and
$$
\partial _x^\alpha\mathcal{A}^{-|\alpha|/2}(f)(x)=\sum_{k\in \mathbb{N}^n}\frac{c_k(f)}{(|k|+n)^{\frac{|\alpha|}{2}}}\partial _x^\alpha\widetilde{H}_k(x)=(-1)^{|\alpha|}\sum_{k\in \mathbb{R}^n}\frac{c_k(f)}{(|k|+n)^{\frac{|\alpha|}{2}}}\widetilde{H}_{k+\alpha}(x),\quad x\in \mathbb{R}^n.
$$
Then, according to Theorem \ref{Th1.1}, for each $f\in C_c^\infty (\mathbb{R}^n)$,
\begin{equation}\label{limC_c}
R_\alpha (f)(x)=\lim_{\varepsilon \rightarrow 0^+}\int_{|x-y|>\varepsilon}R_\alpha (x,y)f(y)dy+c_\alpha f(x),\quad \mbox{for almost all }x\in \mathbb{R}^n.
\end{equation}
Here $c_\alpha =0$ when $\alpha _i$ is odd for some $i=1,...,n$.

To establish our result it is sufficient to show that for every $f \in L^p(\mathbb{R}^n,\gamma_{-1})$ the limit
$$
\lim_{\varepsilon \rightarrow 0^+}\int_{|x-y|>\varepsilon}R_\alpha (x,y)f(y)dy
$$
exists for almost $x\in \mathbb{R}^n$ and the operator $L_\alpha $ defined by
\begin{equation}\label{Lalpha}
L_\alpha (f)(x)=\lim_{\varepsilon \rightarrow 0^+}\int_{|x-y|>\varepsilon}R_\alpha (x,y)f(y)dy+c_\alpha f(x)
\end{equation}
is bounded from $L^p(\mathbb{R}^n,\gamma_{-1})$ into itself. Thus, $L_\alpha$ is the unique extension of $R_\alpha$ from $L^2(\mathbb{R}^n,\gamma_{-1})\cap L^p(\mathbb{R}^n,\gamma_{-1})$ to $L^p(\mathbb{R}^n,\gamma_{-1})$ as a bounded operator from $L^p(\mathbb{R}^n,\gamma_{-1})$ into itself.

For every $\beta >0$ we define the set
$$
N_\beta =\Big\{(x,y)\in \mathbb{R}^n\times\mathbb{R}^n:|x-y|\leq \beta n\min\big\{1,\frac{1}{|x|}\big\}\Big\}.
$$

Observe that $N_1=N$ and that if $\beta >0$, $a \in (0,1)$ and $(x,y)\in N_\beta^c$, then
$$
|a x-a y|\geq a \beta n\min\Big\{1,\frac{1}{|x|}\Big\}\geq a^2 \beta n\min\Big\{1,\frac{1}{|ax|}\Big\},
$$
that is, $(a x,a y)\in N_{a^2 \beta}^c$. In particular we have that if $a\in (0,1)$ then $(a x,a y)\in N^c$ provided that $(x,y)\in N_{1/a^2}^c$.

Let $\beta >0$. We consider the operators $R_{\alpha,{\rm loc}}$ and $R_{\alpha,{\rm glob}}$ defined on $C_c^\infty(\mathbb{R}^n)$ by
\begin{equation}\label{RlocRglob}
R_{\alpha,{\rm loc}}(f)(x)=R_\alpha (f\chi_{N_\beta}(x,\cdot))(x),\quad R_{\alpha,{\rm glob}}(f)(x)=R_\alpha (f\chi_{N_\beta ^c}(x,\cdot))(x),\quad x\in \mathbb{R}^n.
\end{equation}

We recall that
$$
R_\alpha (x,y)=\frac{(-1)^{|\alpha|}}{\pi ^{\frac{n}{2}}\Gamma (\frac{|\alpha |}{2})}\int_0^\infty \frac{e^{-nt}}{(1-e^{-2t})^{\frac{n+|\alpha|}{2}}}\widetilde{H}_\ell \Big(\frac{x-e^{-t}y}{\sqrt{1-e^{-2t}}}\Big)t^{\frac{|\alpha|}{2}-1}dt,\quad x,y\in \mathbb{R}^n,\;x\not=y.
$$
Then, 
\begin{equation}\label{acotRalphab}
|R_\alpha (x,y)|\leq C\int_0^\infty \frac{e^{-nt}e^{-\eta \frac{|x-e^{-t}y|^2}{1-e^{-2t}}}}{(1-e^{-2t})^{\frac{n+|\alpha|}{2}}}t^{\frac{|\alpha|}{2}-1}dt,\quad x,y\in \mathbb{R}^n,\;x\not=y,
\end{equation}
for every $\eta \in (0,1)$. From now on we consider $1/p<\eta <1$ and $\beta =1/\eta$. Since $|x-ry|^2=|y-rx|^2+(1-r^2)(|x|^2-|y|^2)$, $x,y\in \mathbb{R}^n$, $r\in \mathbb{R}$, by making the change of variables $s=1-e^{-2t}$ in the last integral we get
$$
|R_\alpha (x,y)|\leq Ce^{\eta (|y|^2-|x|^2)}\int_0^1\frac{e^{-\eta \frac{|y-x\sqrt{1-s}|^2}{s}}}{s^{\frac{n+|\alpha|}{2}}}(1-s)^{\frac{n}{2}-1}(-\log (1-s))^{\frac{|\alpha|}{2}-1}ds,\quad x,y\in \mathbb{R}^n,\;x\not=y.
$$

Assume first that $(x,y)\in N_\beta^c$. Then, $(\sqrt{\eta}x,\sqrt{\eta}y)\in N^c$. By proceeding as in the proof of \eqref{Mbeta} it follows that, when $\langle x,y\rangle\leq 0$,
$$
|R_\alpha (x,y)|\leq Ce^{-\eta |x|^2}\int_0^\infty \frac{e^{-\frac{r}{2}}(1+r)^{\frac{n+|\alpha|}{2}-\frac{1}{4}}}{r^{\frac{3}{4}}}dr\leq Ce^{-\eta|x|^2}.
$$

Suppose now that $\langle x,y\rangle>0$. Again, as in the estimation in \eqref{Mbeta}, since $(\sqrt{\eta}x,\sqrt{\eta}y)\in N^c$ we obtain 
$$
\int_{\frac{1}{2}}^1\frac{e^{-\eta \frac{|y-x\sqrt{1-s}|^2}{s}}}{s^{\frac{n+|\alpha|}{2}}}(1-s)^{\frac{n}{2}-1}(-\log (1-s))^{\frac{|\alpha|}{2}-1}ds\leq C\Big(\frac{|x+y|}{|x-y|}\Big)^{\frac{n}{2}}\exp\Big(\frac{\eta }{2}\big(y|^2-|x|^2-|x+y||x-y|\big)\Big).
$$

On the other hand, proceeding as in \cite[p. 862]{PeSo} and considering the notation in \eqref{uPeSo} and \cite[Lemma 2.3]{PeSo} we have
\begin{align*}
\int_0^{\frac{1}{2}}\frac{e^{-\eta \frac{|y-x\sqrt{1-s}|^2}{s}}}{s^{\frac{n+|\alpha|}{2}}}(1-s)^{\frac{n}{2}-1}(-\log (1-s))^{\frac{|\alpha|}{2}-1}ds&
\leq C\sup_{s\in (0,1)}\Big(\frac{e^{-\eta u(s)}}{s^{\frac{n}{2}}}\Big)^{1-\frac{1}{n}}\int_0^{\frac{1}{2}}\frac{e^{-\eta \frac{u(s)}{n}}}{\sqrt{s}}\frac{(-\log (1-s))^{\frac{|\alpha|}{2}-1}}{s^{\frac{|\alpha|}{2}}}ds\\
&\hspace{-5cm}\leq C\Big(\frac{e^{-\eta u_0}}{s_0^{n/2}}\Big)^{1-\frac{1}{n}}\int_0^{\frac{1}{2}}\frac{e^{-\frac{u(s)}{n}}}{s^{\frac{3}{2}}}ds\leq  C\Big(\frac{e^{-\eta u_0}}{s_0^{n/2}}\Big)^{1-\frac{1}{n}}\int_0^1\frac{e^{-\frac{u(s)}{n}}}{s^{\frac{3}{2}}\sqrt{1-s}}ds\leq C\Big(\frac{e^{-\eta u_0}}{s_0^{n/2}}\Big)^{1-\frac{1}{n}}\frac{e^{-\eta  \frac{u_0}{n}}}{\sqrt{s_0}}\\
&\hspace{-5cm}\leq C\frac{e^{-\eta u_0}}{s_0^{n/2}}\leq C\Big(\frac{|x+y|}{|x-y|}\Big)^{\frac{n}{2}}\exp\Big(\frac{\eta }{2}\big(y|^2-|x|^2-|x+y||x-y|\big)\Big).
\end{align*}

From the above estimates we conclude that, when $(x,y)\in N_\beta ^c$, 
\begin{equation}\label{acotRalpha}
|R_\alpha (x,y)|\leq C\left\{
\begin{array}{ll}
e^{-\eta |x|^2},&\langle x,y\rangle \leq 0,\\
\displaystyle \Big(\frac{|x+y|}{|x-y|}\Big)^{\frac{n}{2}}\exp\Big(\eta\Big(\frac{|y|^2-|x|^2}{2}-\frac{|x-y||x+y|}{2}\Big)\Big),& \langle x,y\rangle >0.
\end{array}
\right.
\end{equation}

On the other hand, we consider the kernel
\begin{equation}\label{Rclassical}
\mathbb{R}_\alpha (x,y)=\frac{1}{\Gamma(\frac{|\alpha|}{2})}\int_0^\infty\partial _x^\alpha W_t(x-y)t^{\frac{|\alpha|}{2}-1}dt,\quad x,y\in \mathbb{R}^n,\;x\not=y.
\end{equation}

Let us show that
\begin{equation}\label{acotdif}
|R_\alpha (x,y)-\mathbb{R}_\alpha (x,y)|\leq C\frac{\sqrt{1+|x|}}{|x-y|^{n- \frac{1}{2}}},\quad (x,y)\in N_\beta , \;x\not=y .
\end{equation}

For every $x\in \mathbb{R}^n$, we have that
\begin{align*}
R_\alpha (x,y)-\mathbb{R}_\alpha (x,y)&=\frac{1}{\Gamma(\frac{|\alpha|}{2})}\left(\int_0^{m(x)}+\int_{m(x)}^\infty\right) \partial _x^\alpha (T_t^\mathcal{A}
(x,y)-W_t(x-y))t^{\frac{|\alpha|}{2}-1}dt\\
&=I(x,y)+J(x,y),\quad x,y\in \mathbb{R}^n,\;x\not=y.
\end{align*}
The same proof of \eqref{acotI} allows us to obtain that
$$
|I(x,y)|\leq C\frac{\sqrt{1+|x|}}{|x-y|^{n-\frac{1}{2}}},\quad (x,y)\in N_\beta,\;x\not=y.
$$
Also, we get
\begin{align*}
|J(x,y)|&\leq C\left(\int_{m(x)}^\infty \frac{e^{-nt}}{(1-e^{-2t})^{\frac{n+|\alpha|}{2}}}\Big|\widetilde{H}_\alpha \Big(\frac{x-ye^{-t}}{\sqrt{1-e^{-2t}}}\Big)\Big|t^{\frac{|\alpha|}{2}-1}dt+\int_{m(x)}^\infty \Big|\widetilde{H}_\alpha \Big(\frac{x-y}{\sqrt{2t}}\Big)\Big|\frac{dt}{t^{\frac{n}{2}+1}}\right)\\
&\leq C\left(\int_{m(x)}^\infty e^{-nt}\frac{e^{-c\frac{|x-e^{-t}y|^2}{1-e^{-2t}}}}{(1-e^{-2t})^{\frac{n+|\alpha|}{2}}}t^{\frac{|\alpha|}{2}-1}dt+\int_{m(x)}^\infty \frac{e^{-c\frac{|x-y|^2}{2t}}}{t^{\frac{n}{2}+1}}dt\right)\leq C\int_{m(x)}^\infty \frac{dt}{t^{\frac{n}{2}+1}}\\
&= \frac{C}{m(x)^\frac{n}{2}}\leq C(1+|x|)^n\leq C\frac{\sqrt{1+|x|}}{|x-y|^{n- \frac{1}{2}}},\quad (x,y)\in N_\beta ,\; x\not=y,
\end{align*}
and thus, \eqref{acotdif} is established.

From \eqref{acotRalpha} we obtain that $R_{\alpha,{\rm glob}}$, can be extended to $L^p(\mathbb{R}^n,\gamma_{-1})$ as a bounded operator from $L^p(\mathbb{R}^n,\gamma_{-1})$ into itself, and the extension is given by \eqref{RlocRglob}. Indeed, when $(x,y)\in N_\beta ^c$, we have that $|x-y||x+y|\geq C$. By taking into account also that $||y|^2-|x|^2|\leq |x+y||x-y|$ we get
\begin{align*}
\int_{\mathbb{R}^n}|R_\alpha (x,y)|e^{\frac{|x|^2-|y|^2}{p}}\chi _{N_\beta ^c}(x,y)dy&\\
&\hspace{-4cm}\leq C\left(\int_{\mathbb{R}^n}e^{-(\eta -\frac{1}{p})|x|^2-\frac{|y|^2}{p}}dy+\int_{\mathbb{R}^n}|x+y|^n\exp\Big(\Big(\frac{\eta}{2}-\frac{1}{p}\Big)(|y|^2-|x|^2)-\frac{\eta }{2}|x-y||x+y|\Big)dy\right)\\
&\hspace{-4cm}\leq C\left(e^{-(\eta -\frac{1}{p})|x|^2}\int_{\mathbb{R}^n}e^{-\frac{|y|^2}{p}}dy+\int_{\mathbb{R}^n}|x+y|^n\exp\Big(-|x-y||x+y|\big(\frac{\eta}{2}-\big|\frac{1}{p}-\frac{\eta}{2}\big|\big)\Big)dy\right),\quad x\in \mathbb{R}^n.
\end{align*}
Since $\eta >1/p$ it follows that
$$
\sup_{x\in \mathbb{R}^n}\int_{\mathbb{R}^n}|R_\alpha (x,y)|e^{\frac{|x|^2-|y|^2}{p}}\chi_{N_\beta ^c}(x,y)dy<\infty,
$$
and in a similar way 
$$
\sup_{y\in \mathbb{R}^n}\int_{\mathbb{R}^n}|R_\alpha (x,y)|e^{\frac{|x|^2-|y|^2}{p}}\chi_{N_\beta ^c}(x,y)dx<\infty,
$$
from which we deduce that $R_{\alpha,{\rm glob}}$ is bounded on $L^p(\mathbb{R}^n,\gamma_{-1})$.

On the other hand, by using \eqref{acotdif} and that $\sqrt{m(x)}\sim \frac{1}{1+|x|}$, $x\in \mathbb{R}^n$, we have that
\begin{align*}
\int_{\mathbb{R}^n}|R_\alpha (x,y)-\mathbb{R}_\alpha (x,y)|\chi_{N_\beta }(x,y)dy&\leq C\sqrt{1+|x|}\int_{\mathbb{R}^n}\frac{1}{|x-y|^{n-\frac{1}{2}}}\chi_{N_\beta }(x,y)dy\\
&\hspace{-3cm}=C\sqrt{1+|x|}\int_0^{\beta n\sqrt{m(x)}}r^{-\frac{1}{2}}dr=C\sqrt{(1+|x|)\sqrt{m(x)}}\leq C,\quad x\in \mathbb{R}^n.
\end{align*}
Also, since $m(x)\sim m(y)$, $(x,y)\in N_\beta$, we get 
$$
\sup_{y\in \mathbb{R}^n}\int_{\mathbb{R}^n}|R_\alpha (x,y)-\mathbb{R}_\alpha (x,y)|\chi_{N_\beta }(x,y)dx<\infty.
$$
Hence the operator $S_\alpha$ defined by
$$
S_\alpha(f)(x)=\int_{\mathbb{R}^n}(R_\alpha (x,y)-\mathbb{R}_\alpha (x,y))\chi_{N_\beta}(x,y)f(y)dy,\quad x\in \mathbb{R}^n,
$$
is a bounded operator from $L^p(\mathbb{R}^n,dx)$ into itself. Since $S_\alpha$ is a local operator, by \cite[Proposition 3.2.5]{Sa} $S_\alpha$ is bounded from $L^p(\mathbb{R}^n, \gamma_{-1})$ into itself.

We now observe that the kernel $\mathbb{R}_\alpha$ is a standard Calder\'on-Zygmund kernel. Indeed, we get
$$
|\mathbb{R}_\alpha (x,y)|\leq C\int_0^\infty \Big|H_\alpha \big(\frac{x-y}{\sqrt{2t}}\big)\Big|\frac{e^{-\frac{|x-y|^2}{2t}}}{t^{\frac{n}{2}+1}}dt\leq C\int_0^\infty \frac{e^{-c\frac{|x-y|^2}{t}}}{t^{\frac{n}{2}+1}}dt\leq \frac{C}{|x-y |^n},\quad x,y\in \mathbb{R}^n,\;x\not =y.
$$
Let $i=1,...,n$ and denote $\alpha^i=(\alpha _1,...,\alpha_i+1,...,\alpha _n)$. We have that
$$
\partial_{x_i}\mathbb{R}_\alpha (x,y)=\frac{(-1)^{|\alpha|+1}}{(2\pi )^{\frac{n}{2}}2^{\frac{|\alpha|+1}{2}}\Gamma (\frac{|\alpha|}{2})}\int_0^\infty H_{\alpha ^i}\Big(\frac{x-y}{\sqrt{2t}}\Big)\frac{e^{-\frac{|x-y|^2}{2t}}}{t^{\frac{n+3}{2}}}dt,\quad x,y\in \mathbb{R}^n,\;x\not=y.
$$
Then,
$$
\Big|\partial _{x_i}\mathbb{R}_\alpha (x,y)\Big|\leq \frac{C}{|x-y |^{n+1}},\quad x,y\in \mathbb{R}^n,\;x\not =y.
$$

The Euclidean $\alpha$-order Riesz transform $\mathbb{R}_\alpha$ is bounded from $L^q(\mathbb{R}^n,dx)$ into itself, for every $1<q<\infty$. According to \cite[Proposition 3.2.5]{Sa} the operator $\mathbb{R}_{\alpha ,{\rm loc}}$ defined by
$$
\mathbb{R}_{\alpha ,{\rm loc}}(f)(x)=\mathbb{R}_{\alpha }(f\chi _{N_\beta }(x,\cdot))(x)
$$
is bounded from $L^q(\mathbb{R}^n,\gamma_{-1})$ into itself, for every $1<q<\infty$.

We can write $R_{\alpha,{\rm loc}}=S_\alpha (f)+\mathbb{R}_{\alpha ,{\rm loc}}(f)$ on $C_c^\infty(\mathbb{R}^n)$. Then, $R_{\alpha,{\rm loc}}$  can be extended from $C_c^\infty (\mathbb{R}^n)$ to $L^p(\mathbb{R}^n,\gamma_{-1})$ as a bounded operator from $L^p(\mathbb{R}^n,\gamma_{-1})$ into itself.

Since $R_\alpha=R_{\alpha,{\rm loc}}+R_{\alpha,{\rm glob}}$ we conclude that $R_\alpha$ can be extended from $C_c^\infty (\mathbb{R}^n)$ to $L^p(\mathbb{R}^n,\gamma_{-1})$ as a bounded operator from $L^p(\mathbb{R}^n,\gamma_{-1})$ into itself. 

Let us consider the maximal operator 
$$
R_{\alpha,*}(f)(x)=\sup_{\varepsilon >0}\left|\int_{|x-y|>\varepsilon }R_\alpha (x,y)f(y)dy\right|,\quad f\in L^p(\mathbb{R}^n,\gamma_{-1}),\;1<p<\infty.
$$
Let $f\in L^p(\mathbb{R}^n,\gamma_{-1})$, $1<p<\infty$. For every $\varepsilon >0$, by using the above estimates we can write 
\begin{align*}
\left|\int_{|x-y|>\varepsilon }R_\alpha (x,y)f(y)dy\right|& \leq \int_{|x-y|>\varepsilon}|R_\alpha (x,y)-\mathbb{R}_\alpha (x,y)|\chi _{N_\beta }(x,y)|f(y)|dy\\
&\hspace{-3cm} +\int_{|x-y|>\varepsilon}|R_\alpha (x,y)|\chi _{N_\beta ^c}(x,y)|f(y)|dy+\int_{|x-y|>\varepsilon}|\mathbb{R}_\alpha (x,y)|\chi _{N_\beta }(x,y)|f(y)|dy<\infty ,\quad x\in \mathbb{R}^n.
\end{align*}

We also have that
\begin{align*}
R_{\alpha ,*}(f)(x)&\leq C\left(\int_{\mathbb{R}^n}\frac{\sqrt{1+|x|}}{|x-y|^{n-\frac{1}{2}}}\chi_{N_\beta }(x,y)|f(y)|dy+\int_{\mathbb{R}^n}|R_\alpha (x,y)|\chi _{N_\beta ^c}(x,y)|f(y)|dy\right.\\
&\left.\quad +\sup_{\varepsilon >0}\left|\int_{|x-y|>\varepsilon}\mathbb{R}_\alpha (x,y)\chi _{N_\beta }(x,y)f(y)dy\right|\right),\quad x\in \mathbb{R}^n.
\end{align*}

Since the maximal operator 
$$
\mathbb{R}_{\alpha ,*}(f)(x)=\sup_{\varepsilon >0}\left|\int_{|x-y|>\varepsilon}\mathbb{R}_\alpha (x,y)f(y)dy\right|
$$
is bounded from $L^p(\mathbb{R}^n,dx)$ into itself, by using a vector valued version of \cite[Proposition 3.2.5]{Sa} (see \cite[Proposition 2.3]{HTV}), we deduce that the local maximal operator
$$
\mathbb{R}_{\alpha,{\rm loc }, *}(f)(x)=\sup_{\varepsilon >0}\left|\int_{|x-y|>\varepsilon}\mathbb{R}_\alpha (x,y)\chi _{N_\beta}(x,y)f(y)dy\right|
$$
is bounded from $L^p(\mathbb{R}^n,\gamma_{-1})$ into itself.

By using the same arguments as above we conclude that $R_{\alpha ,*}$ is bounded from $L^p(\mathbb{R}^n,\gamma_{-1})$ into itself.

From \eqref{limC_c} and since $C_c^\infty ( \mathbb{R}^n)$ is dense in $L^p(\mathbb{R}^n,\gamma_{-1})$ and $\mathbb{R}_{\alpha ,*}$ is bounded in $L^p(\mathbb{R}^n,\gamma_{-1})$, by using a standard procedure we can conclude  
that the limit
$$
\lim_{\varepsilon \rightarrow 0^+}\int_{|x-y|>\varepsilon}R_\alpha (x,y)f(y)dy
$$
exists for almost $x\in \mathbb{R}^n$ and $L_\alpha$ (defined in \eqref{Lalpha}) is a bounded operator from $L^p(\mathbb{R}^n,\gamma_{-1})$ into itself. 

\begin{rem}The $L^p(\mathbb{R}^n,\gamma_{-1})$-boundedness of the local part $R_{\alpha,{\rm loc}}$ of $R_\alpha$ can be proved also by using Calder\'on-Zygmund theory. We have preferred to do it by comparing $R_{\alpha,{\rm loc}}$ with the classical local Riesz transform $\mathbb{R}_{\alpha,{\rm loc}}$ because in this way we can know how the singularity of $R_{\alpha,{\rm loc}}$ is. Furthermore, these comparative results will be useful in the proof of Theorem \ref{Th1.4} and \ref{Th1.5}.
\end{rem}

\section{Riesz transform associated with the operator $\bar{\mathcal{A}}$}\label{barA}

We define $L_0^2(\mathbb{R}^n,\gamma _{-1})$ as the space that consists of all those $f\in L ^2(\mathbb{R}^n,\gamma _{-1})$ such that $c_0(f)=\int_{\mathbb{R}^n}f(x)dx=0$. Let $\beta >0$. For every $f\in L_0^2(\mathbb{R}^n,\gamma _{-1})$, $\bar{\mathcal{A}}^{-\beta}f$ is defined by
$$
\bar{\mathcal{A}}^{-\beta }(f)=\sum_{k\in \mathbb{N}^n\setminus\{0\}}\frac{c_k(f)}{|k|^\beta}\widetilde{H}_k.
$$
We have that
$$
\|\bar{\mathcal{A}}^{-\beta }(f)\|_{L^2(\mathbb{R}^n,\gamma_{-1})}^2=\sum_{k\in \mathbb{N}^n\setminus\{0\}}\frac{|c_k(f)|^2\|\widetilde{H}_k\|_{L^2(\mathbb{R}^n,\gamma_{-1})}^2}{|k|^{2\beta}}\leq \|f\|^2_{L^2(\mathbb{R}^n,\gamma_{-1})},\quad f \in L_0^2(\mathbb{R}^n,\gamma_{-1}).
$$
We introduce the operator $\overline{S}_\beta $ defined by
$$
\overline{S}_\beta (f)=\frac{1}{\Gamma (\beta)}\int_0^\infty(T_t^{\bar{\mathcal{A}}}(f)-c_0(f)e^{|\cdot|^2}) t^{\beta -1}dt,\quad f\in L^2(\mathbb{R}^n,\gamma_{-1}).
$$

Let $f\in L^2(\mathbb{R}^n,\gamma_{-1})$. We have that 
$$
T_t^{\bar{\mathcal{A}}}(f)-c_0(f)e^{-|\cdot|^2}=\sum_{k\in \mathbb{N}^n}e^{-|k|t}c_k(f)\widetilde{H}_k-e^{|\cdot|^2}c_0(f)=\sum_{k\in \mathbb{N}^n\setminus\{0\}}e^{-|k|t}c_k(f)\widetilde{H}_k,\quad t>0.
$$
Then,
$$
\|T_t^{\bar{\mathcal{A}}}(f)-c_0(f)e^{-|\cdot|^2}\|_{L^2(\mathbb{R}^n,\gamma_{-1})}\leq e^{-t}\|f\|_{L^2(\mathbb{R}^n,\gamma_{-1})},\quad t>0.
$$
Hence, the integral defining $\overline{S}_\beta (f)$ converges in the $L^2(\mathbb{R}^n,\gamma_{-1})$-Bochner sense.

Let $f\in C_c^\infty(\mathbb{R}^n)$. According to \eqref{2.1}, \eqref{2.2} and \eqref{2.3} we get
$$
\sum_{k\in \mathbb{N}^n\setminus\{0\}}e^{-|k|t}|c_k(f)||\widetilde{H}_k(x)|\leq Ce^{-t}e^{-\frac{|x|^2}{2}}\sum_{k\in \mathbb{N}^n\setminus\{0\}}\frac{1}{|k|^2}\leq Ce^{-t}e^{-\frac{|x|^2}{2}},\quad t>0,\;x\in \mathbb{R}^n.
$$
It follows that that the series that defines $T_t^{\bar{\mathcal{A}}}(f)(x)-c_0(f)e^{-|x|^2}$ converges pointwisely and absolutely and
$$
\overline{S}_\beta(f)(x)=\frac{1}{\Gamma (\beta)}\sum_{k\in \mathbb{N}^n\setminus\{0\}}c_k(f)\widetilde{H}_k(x)\int_0^\infty e^{-|k|t}t^{\beta -1}dt=\sum_{k\in \mathbb{N}^n\setminus\{0\}}\frac{c_k(f)}{|k|^\beta}\widetilde{H}_k(x)=\bar{\mathcal{A}}^{-\beta}(f_0)(x),\quad x\in \mathbb{R}^n,
$$
where $f_0(x)=f(x)-c_0(f)e^{-|x|^2}$, $x\in \mathbb{R}^n$.

On the other hand, since $\supp f$ is compact we have that 
\begin{align*}
\int_{m(x)}^\infty\int_{\mathbb{R}^n}\big|T_t^{\bar{\mathcal{A}}}(x,y)-e^{-|x|^2}\big||f(y)|dy t^{\beta -1}dt&\\
&\hspace{-4cm} \leq C\int_{m(x)}^\infty \left(\int_{\mathbb{R}^n}e^{-\frac{|x-e^{-t}y|^2}{1-e^{-2t}}}\left|(1-e^{-2t})^{-\frac{n}{2}}-1\right||f(y)|dy\right.\\
&\hspace{-4cm}\quad +\int_{\mathbb{R}^n}\left|e^{-\frac{|x-e^{-t}y|^2}{1-e^{-2t}}}-e^{-|x-e^{-t}y|^2}\right||f(y)|dy\left. +\int_{\mathbb{R}^n}\left|e^{-|x-e^{-t}y|^2}-\-e^{-|x|^2}\right||f(y)|dy\right)t^{\beta -1} dt\\
&\hspace{-4cm}\leq C\int_0^\infty e^{-t}t^{\beta -1}dt,\quad x\in \mathbb{R}^n,
\end{align*}
and, taking $0<\varepsilon <\min\{2\beta ,n\}$,
\begin{align*}
\int_0^{m(x)}t^{\beta -1}\int_{\mathbb{R}^n}\big|T_t^{\bar{\mathcal{A}}}(x,y)-e^{-|x|^2}\big||f(y)|dydt &\leq C\left(\int_0^{m(x)}t^{\beta -1}\int_{\mathbb{R}^n}\Big(e^{c|x|}\frac{e^{-c\frac{|x-y|^2}{t}}}{t^{\frac{n}{2}}}+e^{-|x|^2})|f(y)|dydt\right)\\
&\hspace{-2cm}\leq C\left(e^{c|x|}\int_{\mathbb{R}^n}\frac{|f(y)|}{|x-y|^{n-\varepsilon}}dy+m(x)^\beta e^{-|x|^2}\int_{\mathbb{R}^n}|f(y)|dy\right)\\
&\hspace{-2cm}\leq C\Big(e^{c|x|}+m(x)^\beta e^{-|x|^2}\Big),\quad x\in\mathbb{R}^n.
\end{align*}
Then, we obtain 
$$
\overline{S}_\beta (f)(x)=\int_{\mathbb{R}^n}\overline{K}_\beta (x,y)f(y)dy,\quad x\in \mathbb{R}^n,
$$
where 
$$
\overline{K}_\beta (x,y)=\frac{1}{\Gamma (\beta)}\int_0^\infty (T_t^{\bar{A}}(x,y)-e^{-|x|^2})t^{\beta -1}dt,\quad x,y\in \mathbb{R}^n,
$$
and 
$$
T_t^{\bar{\mathcal{A}}}(x,y)=e^{nt}T_t^\mathcal{A}(x,y)=\frac{e^{|y|^2-|x|^2}}{\pi ^{\frac{n}{2}}(1-e^{-2t})^{\frac{n}{2}}}\exp\left(-\frac{|y-e^{-t}x|^2}{1-e^{-2t}}\right), \quad x,y\in \mathbb{R}^n, \;t>0.
$$

By denoting $\Pi _0$ the projection from $L^2(\mathbb{R}^n,\gamma_{-1})$ to $L_0^2(\mathbb{R}^n,\gamma_{-1})$ we have proved that, for every $f\in C_c^\infty (\mathbb{R}^n)$,
$$
\bar{\mathcal{A}}^{-\beta}\Pi _0(f)(x)=\int_{\mathbb{R}^n}\overline{K}_\beta (x,y)f(y)dy,\quad x\in \mathbb{R}^n.
$$

Let $f\in C_c^\infty (\mathbb{R}^n)$. Next we show that
\begin{equation}\label{deltaA}
\delta_x^\alpha \bar{\mathcal{A}}^{-|\alpha|/2}\Pi _0(f)(x)=\lim_{\varepsilon \rightarrow 0^+}\frac{1}{\Gamma (\frac{|\alpha|}{2})}\int_{|x-y|>\varepsilon }f(y)\int_0^\infty \delta_x^\alpha T_t^{\bar{A}}(x,y)t^{\frac{|\alpha|}{2}-1}dtdy+c_\alpha f(x),
\end{equation}
for almost all $x\in \mathbb{R}^n$. Here $c_\alpha\in \mathbb{R}$ and, when $\alpha _i$ is odd for some $i=1,...,n$, $c_\alpha =0$.

For every $\ell \in \mathbb{N}^n$, 
\begin{align*}
\delta_x^\ell T_t^{\bar{\mathcal{A}}}(x,y)&=\frac{(-1)^{|\ell|}e^{-|x|^2}}{2^{|\ell|}}\partial ^\ell _x\big(e^{|x|^2}T_t^{\bar{\mathcal{A}}}(x,y)\big)\\
&=\frac{(-1)^{|\ell|}e^{|y|^2-|x|^2}}{2^{|\ell|}\pi ^{\frac{n}{2}}(1-e^{-2t})^{\frac{n+|\ell|}{2}}}e^{-|\ell|t}\widetilde{H}_\ell \left(\frac{y-e^{-t}x}{\sqrt{1-e^{-2t}}}\right),\quad x,y\in \mathbb{R}^n, \;t>0,
\end{align*}
Then, for each $\ell \in \mathbb{N}^n$,
$$
|\delta_x^\ell T_t^{\bar{A}}(x,y)|\leq Ce^{|y|^2-|x|^2}\frac{e^{-|\ell|t}e^{-c\frac{|y-e^{-t}x|^2}{1-e^{-2t}}}}{(1-e^{-2t})^{\frac{n+|\ell|}{2}}},\quad x,y\in \mathbb{R}^n,\;t>0.
$$

Since $\delta_x^\ell (e^{-|x|^2})=0$, $x\in \mathbb{R}^n$, when $\ell \in \mathbb{N}^n\setminus\{0\}$, by proceeding as in the proof of \eqref{derivA} we get that for every $\ell \in \mathbb{N}^n\setminus\{0\}$ and $k\in \mathbb{N}$ being $|\ell|<k$, 
$$
\delta_x^\ell \bar{\mathcal{A}}^{-k/2}\Pi _0(f)(x)=\frac{1}{\Gamma (\frac{k}{2})}\int_{\mathbb{R}^n}f(y)\int_0^\infty \delta_x^\ell T_t^{\bar{A}}(x,y)t^{\frac{k}{2}-1}dtdy,\quad\mbox{ for almost all }x\in \mathbb{R}^n.
$$

Without loss of generality we can assume that $\alpha _1\geq 1$ and consider $\ell =(\alpha _1-1,\alpha_2,...,\alpha_n)$. When $\ell \in \mathbb{N}^n\setminus\{0\}$, we can proceed as in the proof of Theorem \ref{Th1.1}. For $n>1$ we write
\begin{align}\label{decomp100n}
\delta_x^\alpha \bar{\mathcal{A}}^{-|\alpha|/2}\Pi _0(f)(x)&=\frac{1}{\Gamma (\frac{|\alpha|}{2})}\delta_{x_1}\left(\int_{\mathbb{R}^n}f(y)\int_0^\infty \delta_x^\ell \big(T_t^{\bar{\mathcal{A}}}(x,y)-e^{|y|^2-|x|^2}W_t(y-x)\big)t^{\frac{|\alpha|}{2}-1}dtdy\right.\nonumber\\
&\quad +\left. \int_{\mathbb{R}^n}f(y)\int_0^\infty \delta_x^\ell (e^{|y|^2-|x|^2}W_t(y-x))t^{\frac{|\alpha|}{2}-1}dtdy\right)=\frac{1}{\Gamma (\frac{|\alpha|}{2})}\delta_{x_1}\big(F(x)+G(x)\big),\quad x\in \mathbb{R}^n.
\end{align}
We observe that, for every $r\in \mathbb{N}^n$,
$$
\delta_x^r (e^{|y|^2-|x|^2}W_t(y-x))=\frac{(-1)^{|r|}}{2^{|r|}}e^{|y|^2-|x|^2}\partial_x^r(W_t(y-x))=\frac{(-1)^{|r|}}{2^{|r|}\pi ^{\frac{n}{2}}}\frac{e^{|y|^2-|x|^2}}{(2t)^{\frac{n+|r|}{2}}}\widetilde{H}_r \Big(\frac{y-x}{\sqrt{2t}}\Big),\quad x,y\in \mathbb{R}^n,\;t>0.
$$
By considering the decomposition
\begin{align}\label{decomp}
\delta_x^\alpha \big(T_t^{\bar{\mathcal{A}}}(x,y)-e^{|y|^2-|x|^2}W_t(y-x)\big)&=\frac{(-1)^{|\alpha|}}{2^{|\alpha|}\pi ^{\frac{n}{2}}}e^{|y|^2-|x|^2}\left(\frac{e^{-|\alpha|t}}{(1-e^{-2t})^{\frac{n+|\alpha|}{2}}}\widetilde{H}_\ell \left(\frac{y-e^{-t}x}{\sqrt{1-e^{-2t}}}\right)-\frac{1}{(2t)^{\frac{n+|\alpha|}{2}}}\widetilde{H}_\alpha \Big(\frac{y-x}{\sqrt{2t}}\Big)\right)\nonumber\\
&\hspace{-4cm}=\frac{(-1)^{|\alpha|}}{2^{|\alpha|}\pi ^{\frac{n}{2}}}e^{|y|^2-|x|^2}\left\{\frac{e^{-|\alpha|t}-1}{(1-e^{-2t})^{\frac{n+|\alpha|}{2}}}\widetilde{H}_\alpha\Big(\frac{y-e^{-t}x}{\sqrt{1-e^{-2t}}}\Big)+\Big(\frac{1}{(1-e^{-2t})^{\frac{n+|\alpha|}{2}}}-\frac{1}{(2t)^{\frac{n+|\alpha|}{2}}}\Big)\widetilde{H}_\alpha\left(\frac{y-e^{-t}x}{\sqrt{1-e^{-2t}}}\right)\right.\\
&\hspace{-4cm}\quad \left.+\frac{1}{(2t)^{\frac{n+|\alpha|}{2}}}\left(\widetilde{H}_\alpha\left(\frac{y-e^{-t}x}{\sqrt{1-e^{-2t}}}\right)-\widetilde{H}_\alpha \left(\frac{y-x}{\sqrt{2t}}\right)\right)\right\},\quad x,y\in \mathbb{R}^n.\nonumber
\end{align}
we can argue as in the proof of \eqref{derivF} to obtain that
$$
\delta_{x_1}F(x)=\int_{\mathbb{R}^n}f(y)\int_0^\infty \delta_x^\alpha \big(T_t^{\bar{\mathcal{A}}}(x,y)-e^{|y|^2-|x|^2}W_t(y-x)\big)t^{\frac{|\alpha|}{2}-1}dtdy, \quad \mbox{ for almost all }x\in \mathbb{R}^n.
$$

On the other hand, to deal with $\delta_{x_1}G(x)$ we consider $g(x)=f(x)e^{|x|^2}$ and $\Psi (x)=\int_0^\infty \partial_x^\ell (W_t)(-x)t^{\frac{|\alpha|}{2}-1}dt$, $x\in \mathbb{R}^n$, and write
$$
G(x)=\frac{e^{-|x|^2}}{2^{|\ell|}}\int_{\mathbb{R}^n}g(x-y)\Psi (y)dy,\quad x\in \mathbb{R}^n.
$$
We proceed as in the proof of \eqref{derivG} to get
$$
\delta_{x_1}G(x)=\lim_{\varepsilon \rightarrow 0^+}\int_{|x-y|>\varepsilon}f(y)\int_0^\infty \delta _x^\alpha (e^{|y|^2-|x|^2}W_t(x-y))t^{\frac{|\alpha |}{2}-1}dtdy+c_\alpha f(x),\quad \mbox{ for almost all }x\in \mathbb{R}^n,
$$
where $c_\alpha \in \mathbb{R}$ and $c_\alpha=0$ if $\alpha _i$ is odd for some $i=1,...,n$. Thus \eqref{deltaA} is established when $n>1$.

If $n=1$ we can also follow the proof of Theorem \ref{Th1.1} by using the decomposition
\begin{equation}\label{decomp1001}
\delta_x^\alpha \bar{\mathcal{A}}^{-\alpha/2}\Pi _0(f)(x)=\frac{1}{\Gamma (\frac{|\alpha|}{2})}\delta_x (\overline{F}(x)+\overline{G}(x)),\quad x\in \mathbb{R}^n,
\end{equation}
where
$$
\overline{F}(x)=\int_{\mathbb{R}^n}f(y)\int_0^\infty \delta_x^\ell T_t^{\bar{\mathcal{A}}}(x,y)-\delta _x^\ell \big(e^{|y|^2-|x|^2}[W_t(y-x)-\frac{1}{2^\ell}\frac{d^\ell}{dx^\ell}W_t (0)\chi_{(1,\infty)}(t)]\big)t^{\frac{|\alpha|}{2}-1}dtdy,
,\quad x\in \mathbb{R}^n,
$$
and
$$
\overline{G}(x)=\int_{\mathbb{R}^n}f(y)\int_0^\infty \delta_x^\ell \big(e^{|y|^2-|x|^2}[W_t(y-x)-\frac{1}{2^\ell}\frac{d^\ell}{dx^\ell}W_t (0)\chi_{(1,\infty)}(t)]\big)t^{\frac{|\alpha|}{2}-1}dtdy,\quad x\in \mathbb{R}^n.
$$

When $\ell =0$, that is, $\alpha =(1,0,...,0)$, we can replace $F$ in \eqref{decomp100n} and $\overline{F}$ in \eqref{decomp1001} by
$$
F(x)=\int_{\mathbb{R}^n}f(y)\int_0^\infty \big(T_t^{\bar{\mathcal{A}}}(x,y)-e^{-|x|^2}-e^{|y|^2-|x|^2}W_t(y-x)\big)\frac{dtdy}{\sqrt{t}},\quad x\in \mathbb{R}^n,
$$
and
$$
\overline{F}(x)=\int_{\mathbb{R}^n}f(y)\int_0^\infty \Big(T_t^{\bar{\mathcal{A}}}(x,y)-e^{-|x|^2}-\big(e^{|y|^2-|x|^2}[W_t(y-x)-\frac{1}{2^\ell}\frac{d^\ell}{dx^\ell}W_t (0)\chi_{(1,\infty)}(t)]\Big)\frac{dtdy}{\sqrt{t}},\quad x\in \mathbb{R}^n,
$$
respectively, and proceed as above to obtain \eqref{deltaA}.

According to \eqref{2.1}, \eqref{2.2} and \eqref{2.3} we get
$$
\overline{R}_\alpha f(x)=\delta ^\alpha \mathcal{A}^{-|\alpha|/2}\Pi_0f(x),\quad x\in \mathbb{R}^n,
$$
and then,
$$
\overline{R}_\alpha f(x)=\lim_{\varepsilon \rightarrow 0^+}\frac{1}{\Gamma (\frac{|\alpha|}{2})}\int_{|x-y|>\varepsilon }f(y)\int_0^\infty \delta_x^\alpha T_t^{\bar{A}}(x,y)t^{\frac{|\alpha|}{2}-1}dtdy+c_\alpha f(x),\quad \mbox{ for almost all }x\in \mathbb{R}^n,
$$
with $c_\alpha=0$, when $\alpha_i$ is odd for some $i=1,...,n$.

We are going to show the $L^p(\mathbb{R}^n,\gamma_{-1})$-boundedness properties of $\overline{R}_\alpha $. We recall that
$$
\overline{R}_\alpha (x,y)=\frac{(-1)^{|\alpha|}}{2^{|\alpha|}\pi ^{\frac{n}{2}}\Gamma(\frac{|\alpha|}{2})}e^{|y|^2-|x|^2}
\int_0^\infty \frac{e^{-|\alpha|t}}{(1-e^{-2t})^{\frac{n+|\alpha|}{2}}}\widetilde{H}_\alpha\Big(\frac{y-e^{-t}x}{\sqrt{1-e^{-2t}}}\Big)t^{\frac{|\alpha|}{2}-1}dt,\quad x,y\in \mathbb{R}^n,\quad x\not=y.
$$

Consider first $1<p<\infty$ and choose $1-\frac{1}{p}<\eta <1$. By making the change of variables $s=1-e^{-2t}$, $t\in (0,\infty)$, we obtain
$$
|\overline{R}_\alpha (x,y)|\leq Ce^{|y|^2-|x|^2}\int_0^1\frac{e^{-\eta\frac{|y-x\sqrt{1-s}|^2}{s}}}{s^{\frac{n+|\alpha|}{2}}}(1-s)^{\frac{|\alpha|}{2}-1}(-\log(1-s))^{\frac{|\alpha|}{2}-1}ds,\quad x,y\in \mathbb{R}^n,\;x\not=y.
$$

Let $\beta=\eta^{-1}$ and consider the local and global operators defined on $C_c^\infty (\mathbb{R}^n)$ by
$$
\overline{R}_{\alpha ,{\rm loc}}(f)(x)=\overline{R}_\alpha (f\chi _{N_\beta}(x, \cdot ))(x),\quad \mbox{ and }\quad \overline{R}_{\alpha ,{\rm glob}}(f)(x)=\overline{R}_\alpha (f\chi _{N_\beta^c}(x,\cdot ))(x),\quad x\in \mathbb{R}^n.
$$

By proceeding as in the proof of \eqref{acotRalpha} it follows that, for each $(x,y)\in N_\beta ^c$,
$$
\overline{R}_\alpha (x,y)|\leq C\left\{\begin{array}{ll}
   \displaystyle e^{(1-\eta)|y|^2-|x|^2},  & \langle x,y\rangle \leq 0, \\[0,5cm]
   \Big(\frac{|x+y|}{|x-y|}\Big)^n\exp\big((1-\frac{\eta}{2})(|y|^2-|x|^2)-\frac{\eta }{2}|x+y||x-y|\big), & \langle x,y\rangle >0.
\end{array}
\right.
$$

We have that
\begin{align*}
\int_{\mathbb{R}^n}e^{-\frac{|y|^2}{p}+\frac{|x|^2}{p}}|\overline{R}_\alpha (x,y)|\chi _{N_\beta ^c}(x,y)dy&\leq C\left(\int_{\mathbb{R}^n}e^{|y|^2(1-\eta -\frac{1}{p})}e^{(\frac{1}{p}-1)|x|^2}dy\right.\\
&\left.+\int_{\mathbb{R}^n}|x+y|^n\exp \Big(-|x+y||x-y|\big(\frac{\eta }{2}-|1-\frac{1}{p}-\frac{\eta}{2}|\big)\Big)dy\right),\quad x\in \mathbb{R}^n.
\end{align*}
Then, since $\eta >1-\frac{1}{p}$,
$$
\sup_{x\in \mathbb{R}^n}\int_{\mathbb{R}^n}e^{-\frac{|y|^2}{p}+\frac{|x|^2}{p}}|\overline{R}_\alpha (x,y)|\chi _{N_\beta ^c}(x,y)dy<\infty.
$$
Also we get
$$
\sup_{y\in \mathbb{R}^n}\int_{\mathbb{R}^n}e^{-\frac{|y|^2}{p}+\frac{|x|^2}{p}}|\overline{R}_\alpha (x,y)|\chi _{N_\beta ^c}(x,y)dx<\infty.
$$
We conclude that $\overline{R}_{\alpha ,{\rm glob}}$ is bounded from $L^p(\mathbb{R}^n,\gamma_{-1})$ into itself.

We are going to study the operator $\overline{R}_{\alpha, {\rm loc}}$. We write
\begin{align*}
\overline{R}_\alpha (x,y)&=\frac{1}{\Gamma (\frac{|\alpha|}{2})}\left(\int_0^\infty \delta _x^\alpha [T_t^\mathcal{\bar{A}}(x,y)-e^{|y|^2-|x|^2}W_t(y-x)]t^{\frac{|\alpha |}{2}-1}dt+\int_0^\infty \delta _x^\alpha[e^{|y|^2-|x|^2}W_t(y-x)]t^{\frac{|\alpha |}{2}-1}dt\right)\\
&=I(x,y)+J(x,y),\quad x,y\in \mathbb{R}^n, \;x\not=y.
\end{align*}

By taking into account that
$$
|\delta _x^\alpha [T_t^\mathcal{\bar{A}}(x,y)-e^{|y|^2-|x|^2}W_t(y-x)]|\leq Ce^{|y|^2-|x|^2}\left(\frac{e^{-|\alpha|t}}{(1-e^{-2t})^{\frac{n+|\alpha|}{2}}}+\frac{e^{-c\frac{|x-y|^2}{t}}}{t^{\frac{n+|\alpha|}{2}}}\right),\quad x,y\in \mathbb{R}^n,\;t>0,
$$
we get (see \eqref{dif1})
$$
\int_{m(x)}^\infty |\delta _x^\alpha [T_t^\mathcal{\bar{A}}(x,y)-e^{|y|^2-|x|^2}W_t(y-x)]|t^{\frac{|\alpha|}{2}-1}dt\leq Ce^{|y|^2-|x|^2}(1+|x|)^n,\quad x,y\in \mathbb{R}^n.
$$

Also, from \eqref{decomp} and proceeding as in the proof of \eqref{acotI} we can see that
$$
\int_0^{m(x)}|\delta _x^\alpha [T_t^\mathcal{\bar{A}}(x,y)-e^{|y|^2-|x|^2}W_t(y-x)]|t^{\frac{|\alpha|}{2}-1}dt\leq Ce^{|y|^2-|x|^2}\frac{\sqrt{1+|x|}}{|x-y|^{n-\frac{1}{2}}},\quad (x,y)\in N_\beta.
$$

Since $||y|^2-|x|^2|\leq C$ when $(x,y)\in N_\beta$, we obtain that 
\begin{equation}\label{acotIb}
|I(x,y)|\leq C\frac{\sqrt{1+|x|}}{|x-y|^{n-\frac{1}{2}}},\quad (x,y)\in N_\beta,\;x\not=y.
\end{equation}
On the other hand, we observe that
$$
J(x,y)=\frac{e^{|y|^2-|x|^2}}{2^{|\alpha|}\Gamma(\frac{|\alpha|}{2})}\int_0^\infty \partial _y^\alpha (W_t(y-x))t^{\frac{|\alpha|}{2}-1}dt=2^{-|\alpha|}e^{|y|^2-|x|^2}\mathbb{R}_\alpha (y,x),\quad x,y\in \mathbb{R}^n,\;x\not=y,
$$
where $\mathbb{R}_\alpha (\cdot,\cdot)$ is the classical kernel considered in \eqref{Rclassical}. The result can be established by proceeding as in the proof of Theorem \ref{Th1.2} by taking into account that the Euclidean Riesz transform $\mathbb{R}_\alpha$ is bounded from $L^p(\mathbb{R}^n,dx)$ into itself.

To deal with the case $p=1$ we consider the local and global operators $\overline{R}_{\alpha,{\rm loc}}$ and $\overline{R}_{\alpha ,{\rm glob}}$ defined above with $N$ instead of $N_\beta$.  Since  the classical Riesz transform $\mathbb{R}_\alpha$ is bounded from $L^1(\mathbb{R}^n,dx)$ into $L^{1,\infty}(\mathbb{R}^n,dx)$ we can use \eqref{acotIb} and argue as in the proof of Theorem 1.2 to obtain that $\overline{R}_{\alpha ,{\rm loc}}$ defines a bounded operator from $L^1(\mathbb{R}^n, \gamma_{-1})$ into $L^{1,\infty }(\mathbb{R}^n,\gamma_{-1})$.

In order to prove that $\overline{R}_{\alpha ,{\rm glob}}$ defines a bounded operator from $L^1(\mathbb{R}^n,\gamma_{-1})$ into itself we make the change of variables $r=e^{-t}$, $t\in (0, \infty )$, and write $\overline{R}_\alpha (x,y)=\overline{R}_{\alpha, 1}(x,y)+\overline{R}_{\alpha ,2}(x,y)$, where
$$
\overline{R}_{\alpha ,1}(x,y)=\frac{(-1)^{|\alpha|}}{2^{|\alpha|}\pi ^{\frac{n}{2}}\Gamma(\frac{|\alpha|}{2})}e^{|y|^2-|x|^2}\int_0^{\frac{1}{2}}\frac{r^{|\alpha|-1}}{(1-r^2)^{\frac{n+|\alpha|}{2}}}\widetilde{H}_\alpha \Big(\frac{y-rx}{\sqrt{1-r^2}}\Big)(-\log r)^{\frac{|\alpha|}{2}-1}dr,\quad x,y\in \mathbb{R}^n,\;x\not=y.
$$
Suppose that $n=1$ or $|\alpha |\geq n+1$ when $n>1$. By using \cite[Lemma 3.3.3]{Sa} it follows that
\begin{align*}
|\overline{R}_{\alpha ,1}(x,y)|&\leq Ce^{|y|^2-|x|^2}\int_0^{1/2}r^{|\alpha|-1}\frac{e^{-c\frac{|y-rx|^2}{1-r^2}}}{(1-r^2)^{\frac{n}{2}}}(-\log r)^{\frac{|\alpha|}{2}-1}dr\\
&\leq Ce^{|y|^2-|x|^2}\sup_{r\in (0,1)}\frac{r^n e^{-c\frac{|y-rx|^2}{1-r^2}}}{(1-r^2)^{\frac{n}{2}}}\int_0^1(-\log r)^{\frac{|\alpha|}{2}-1}dr\\
&\leq Ce^{|y|^2-|x|^2}\min\{(1+|x|)^n,(|x|\sin \theta (x,y))^{-n}\}, \quad (x,y)\in N^c.
\end{align*}

On the other hand, by proceeding as in the estimation of $K_2^0(x,y)$ in \cite[proof of Proposition 5.1]{BrSj} we obtain, for every $(x,y)\in N^c$,
$$
|\overline{R}_{\alpha ,2}(x,y)|\leq Ce^{|y|^2-|x|^2}\int_{1/2}^1\frac{e^{-c\frac{|y-rx|^2}{1-r^2}}}{(1-r^2)^{\frac{n+2}{2}}}dr\leq Ce^{|y|^2-|x|^2}(|x|^{-n}+\min\{(1+|x|)^n,(|x|\sin \theta (x,y))^{-n}\}).
$$
From \cite[Lemma 3.3.4]{Sa} and \cite[Lemma 4.2]{BrSj} we deduce that $\overline{R}_{\alpha ,{\rm glob}}$ defines a bounded operator from $L^1(\mathbb{R}^n, \gamma_{-1})$ into $L^{1,\infty }(\mathbb{R}^n,\gamma_{-1})$.

Thus, we conclude that $R_\alpha$ can be extended to $L^1(\mathbb{R}^n, \gamma_{-1})$ as a bounded operator from $L^1(\mathbb{R}^n, \gamma_{-1})$ into $L^{1,\infty }(\mathbb{R}^n,\gamma_{-1})$.

\section{UMD spaces and Riesz transforms in the inverse Gaussian setting}

\subsection{Proof of Theorem \ref{Th1.4}.} 

For every $i=1,...,n$, by $\mathbb{R}_{e^i}$ we denote the $i$-th Euclidean Riesz transform defined, for every $f\in L^p(\mathbb{R}^n,dx)$, $1<p<\infty$, by
$$
\mathbb{R}_{e^i}(f)(x)=\lim_{\varepsilon \rightarrow 0^+}\int_{|x-y|>\varepsilon} \mathbb{R}_{e^i}(x-y)f(y)dy,\quad \mbox{fol almost all }x\in \mathbb{R}^n,
$$
where
$$
\mathbb{R}_{e^i}(z)=\frac{1}{\sqrt{\pi}}\int_0^\infty \partial_{x_i}W_t(z)\frac{dt}{\sqrt{t}},\quad z\in \mathbb{R}^n,	\;z\not=0.
$$
Observe that 
$$
\mathbb{R}_{e^i}(z)=-\frac{\sqrt{2}\Gamma (\frac{n+1}{2})}{\pi ^{\frac{n+1}{2}}}\frac{z_i}{|z|^{n+1}},\quad z\in \mathbb{R}^n,\;z\not=0,\;i=1,...,n.
$$

Let $X$ be a Banach space. For every $i=1,...,n$, we define $\mathbb{R}_{e^i}$ on $L^p(\mathbb{R}^n,dx)\otimes X$, $1\leq p<\infty$, in the obvious way.

The UMD-property for $X$ can be characterized by using $\mathbb{R}_{e^i}$, $i=1,...,n$. The properties stated in Theorems \ref{Th1.4} and \ref{Th1.5} hold when $R_{e^i}$ is replaced by $\mathbb{R}_{e^i}$, $i=1,...,n$. The estimations established in the proofs of Theorems \ref{Th1.1} and \ref{Th1.2} allow us to pass from $\mathbb{R}_{e^i}$ to $R_{e^i}$, $i=1,...,n$.

Let $i=1,...,n$ and $1<p<\infty$. We are going to see that the following two assertions are equivalent:
\begin{enumerate}
\item[(i)] $R_{e^i}$ can be extended from $(L^2(\mathbb{R}^n,\gamma_{-1})\bigcap L^p(\mathbb{R}^n,\gamma_{-1}))\otimes X$ to $L^p(\mathbb{R}^n,\gamma_{-1}, X)$ as a bounded operator from $L^p(\mathbb{R}^n,\gamma_{-1}, X)$ into itself.

\item [(ii)] $\mathbb{R}_{e^i}$ can be extended from $(L^2(\mathbb{R}^n,dx)\bigcap L^p(\mathbb{R}^n,dx)\otimes X$ to $L^p(\mathbb{R}^n,dx, X)$ as a bounded operator from $L^p(\mathbb{R}^n,dx, X)$ into itself.
\end{enumerate}

We choose $\frac{1}{p}<\eta <1$ and consider the global and local operators as in the previous sections according to the region $N_\beta$, with $\beta =\eta ^{-1}$. 

Suppose that (ii) holds. We can write 
$
R_{e^i}=(R_{e^i,{\rm loc}}-\mathbb{R}_{e^i,{\rm loc}})+\mathbb{R}_{e^i,{\rm loc}}+R_{e^i,{\rm glob}}.
$
Since $\mathbb{R}_{e^i}$ is a Calder\'on-Zygmund operator, by using a vectorial version of \cite[Proposition 3.2.5]{Sa} (see \cite[Proposition 2.3]{HTV}) we deduce that $\mathbb{R}_{e^i,{\rm loc}}$ can be extended from $(L^2(\mathbb{R}^n,\gamma_{-1})\bigcap L^p(\mathbb{R}^n,\gamma_{-1}))\otimes X$ to $L^p(\mathbb{R}^n,\gamma_{-1}, X)$ as a bounded operator from $L^p(\mathbb{R}^n,\gamma_{-1}, X)$ into itself.

According to \eqref{acotRalpha} and \eqref{acotdif} we have that
\begin{equation}\label{5.1}
|R_{e^i}(x,y)-\mathbb{R}_{e^i}(x-y)|\leq L_i(x,y),\;\;(x,y)\in  N_\beta,\quad \mbox{ and }\quad |R_{e^i}(x,y)|\leq M_i(x,y),\;\;(x,y)\in N_\beta ^c, 
\end{equation}
and the integral operators
$$
L_i(f)(x)=\int_{\mathbb{R}^n}L_i(x,y)\chi _{N_\beta}(x,y)f(y)dy,\quad \mbox{ and }\quad M_i(f)(x)=\int_{\mathbb{R}^n}M_i(x,y)\chi _{N_\beta ^c}(x,y)f(y)dy
$$
are bounded from $L^p(\mathbb{R}^n,\gamma_{-1})$ into itself. Then, $L_i$ and $M_i$ define bounded operators from $L^p(\mathbb{R}^n,\gamma_{-1},X)$ into itself, and the same property holds for the operators $R_{e^i,{\rm loc}}-\mathbb{R}_{e^i,{\rm loc}}$ and $R_{e^i,{\rm glob}}$. We conclude that (i) holds.

Suppose now that (i) holds. By \eqref{acotRalphab} we get
\begin{align*}
|R_{e^i}(x,y)|&\leq C\int_0^\infty e^{-nt}\frac{e^{-c\frac{|x-e^{-t}y|^2}{1-e^{-2t}}}}{(1-e^{-2t})^{\frac{n+1}{2}}}\frac{dt}{\sqrt{t}}\leq C\left(\int_0^{m(x)}\frac{e^{-c\frac{|x-y|^2}{t}}}{t^{\frac{n}{2}+1}}dt+\int_{m(x)}^\infty \frac{dt}{t^{\frac{n}{2}+1}}\right)\\
&\leq C\left(\frac{1}{|x-y |^n}+\frac{1}{m(x)^{\frac{n}{2}}}\right)\leq \frac{C}{|x-y|^n},\quad (x,y)\in N_\beta ,\;x\not=y.
\end{align*}

In a similar way we get, for each $k=1,...,n$,
$$
|\partial _{x_k}R_{e^i}(x,y)|\leq C\int_0^\infty e^{-nt}\frac{e^{-c\frac{|x-e^{-t}y|^2}{1-e^{-2t}}}}{(1-e^{-2t})^{\frac{n}{2}+1}}\frac{dt}{\sqrt{t}}\leq \frac{C}{|x-y|^{n+1}},\quad (x,y)\in N_\beta,\;x\not=y .
$$

Then, according to a vector valued version of \cite[Propositions 3.2.5 and 3.2.7]{Sa} (see \cite[Propositions 2.3 and 2.4]{HTV}) we deduce that $R_{e^i,{\rm loc}}$ defines a bounded operator from $L^p(\mathbb{R}^n,dx, X)$ into itself. Also, $R_{e^i,{\rm loc}}-\mathbb{R}_{e^i,{\rm loc}}$ defines a bounded operator from $L^p(\mathbb{R}^n,dx, X)$ into itself. We conclude that $\mathbb{R}_{e^i,{\rm loc}}$ defines a bounded operator from $L^p(\mathbb{R}^n,dx, X)$ into itself. Since $\mathbb{R}_{e^i}$ is dilatation invariant, by proceeding as in the proof of \cite[Theorem 1.10, $(ii)\Rightarrow (i)$]{HTV} it follows that $\mathbb{R}_{e^i}$ can be extended from $(L^2(\mathbb{R}^n,dx)\bigcap L^p(\mathbb{R}^n,dx)\otimes X$ to $L^p(\mathbb{R}^n,dx, X)$ as a bounded operator from $L^p(\mathbb{R}^n,dx, X)$ into itself.

The same arguments allow us to prove that the following assertions are equivalent.
\begin{enumerate}
\item[(iii)] $R_{e^i}$ can be extended from $(L^1(\mathbb{R}^n,\gamma_{-1})\bigcap L^2(\mathbb{R}^n,\gamma_{-1}))\otimes X$ to $L^1(\mathbb{R}^n,\gamma_{-1}, X)$ as a bounded operator from $L^1(\mathbb{R}^n,\gamma_{-1}, X)$ into $L^{1,\infty}(\mathbb{R}^n,\gamma_{-1}, X)$.

\item[(iv)] $\mathbb{R}_{e^i}$ can be extended from $(L^1(\mathbb{R}^n,dx)\bigcap L^2(\mathbb{R}^n,dx))\otimes X$ to $L^1(\mathbb{R}^n,dx, X)$ as a bounded operator from $L^1(\mathbb{R}^n,dx, X)$ into $L^{1,\infty}(\mathbb{R}^n,dx, X)$.
\end{enumerate}

Furthermore, in a similar way we can see that (i)$\iff$(ii) and (iii)$\iff$(iv) when $R_{e^i}$ and $\mathbb{R}_{e^i}$ are replaced by $R_{e^i,*}$ and $\mathbb{R}_{e^i,*}$, respectively. 

The proof of Theorem \ref{Th1.4} is thus finished.

\subsection{Proof of Theorem \ref{Th1.5}.} Let $1<p<\infty$ and $i=1,...,n$. We are going to see that the following two assertions are equivalent.

(a) For every $f\in L^p(\mathbb{R}^n,\gamma_{-1}, X)$, there exists 
$$
\lim_{\varepsilon \rightarrow 0^+} \int_{|x-y|>\varepsilon}R_{e^i}(x,y)f(y)dy,\quad \mbox{for almost all }x\in \mathbb{R}^n.
$$

(b) For every $f\in L^p(\mathbb{R}^n,dx, X)$, there exists 
$$
\lim_{\varepsilon \rightarrow 0^+} \int_{|x-y|>\varepsilon}\mathbb{R}_{e^i}(x-y)f(y)dy,\quad \mbox{for almost all }x\in \mathbb{R}^n.
$$

We consider again $\frac{1}{p}<\eta<1$ and $\beta=\eta ^{-1}$. Suppose that $(a)$ is true. Let $f\in L^p(\mathbb{R}^n,dx, X)$. We can write 
\begin{align*}
\int_{|x-y|>\varepsilon}\mathbb{R}_{e^i}(x-y)f(y)dy&=\int_{|x-y|>\varepsilon}(\mathbb{R}_{e^i}(x-y)-R_{e^i}(x-y))\chi _{N_\beta}(x,y)f(y)dy+\int_{|x-y|>\varepsilon}R_{e^i}(x,y)\chi _{N_\beta}(x,y)f(y)dy\\
&\quad +\int_{|x-y|>\varepsilon}\mathbb{R}_{e^i}(x-y)\chi _{N_\beta ^c}(x,y)f(y)dy,\quad x\in \mathbb{R}^n,\;\varepsilon >0.
\end{align*}

Since \eqref{5.1} holds and the operator $L_i$ is bounded from $L^p(\mathbb{R}^n,dx, X)$ into itself, there exists the limit
$$
\lim_{\varepsilon \rightarrow 0^+}\int_{|x-y|>\varepsilon}(R_{e^i}(x,y))-\mathbb{R}_{e^i}(x-y)\chi _{N_\beta}f(y)dy,\quad \mbox{ for almost all }x\in \mathbb{R}^n.
$$
On the other hand, we get
\begin{align*}
\int_{\mathbb{R}^n}|\mathbb{R}_{e^i}(x-y)|\chi _{N_\beta ^c}(x,y)\|f(y)\|dy&\leq C\int_{\mathbb{R}^n}\frac{1}{|x-y|^n}\chi _{N_\beta ^c}(x,y)\|f(y)\|dy\\
&\hspace{-3cm}\leq C \left(\int_{|x-y|>\beta n\sqrt{m(x)}}\frac{dy}{|x-y|^{np'}}\right)^{1/p'}\|f\|_{L^p(\mathbb{R}^n, dx, X)}\\
&\hspace{-3cm}\leq C\left(\int_{\beta n\sqrt{m(x)}}^\infty \frac{dr}{r^{n(p'-1)+1}}\right)^{1/p'}\|f\|_{L^p(\mathbb{R}^n, dx, X)}\leq \frac{C}{m(x)^{n/(2p)}}\|f\|_{L^p(\mathbb{R}^n, dx, X)},\quad x\in \mathbb{R}^n.
\end{align*}

Then, there exists the limit
$$
\lim_{\varepsilon \rightarrow 0^+}\int_{\mathbb{R}^n}\mathbb{R}_{e^i}(x-y)\chi _{N_\beta ^c}(x,y)f(y)dy,\quad x\in \mathbb{R}^n.
$$

Suppose that $g\in L^p(\mathbb{R}^n, \gamma_{-1},X)$. It was seen in the proof of Theorem \ref{Th1.2} that
$$
\int_{\mathbb{R}^n}|R_{e^i}(x,y)|\chi _{N_\beta ^c}(x,y)\|g(y)\|dy\in L^p(\mathbb{R}^n,\gamma_{-1}).
$$
Then,
$$
\lim_{\varepsilon \rightarrow 0^+}\int_{\mathbb{R}^n}R_{e^i}(x,y)\chi _{N_\beta ^c}(x,y)g(y)dy=\int_{\mathbb{R}^n}R_{e^i}(x,y)\chi _{N_\beta^c}(x,y)g(y)dy,\quad \mbox{ for almost }x\in \mathbb{R}^n.
$$
Since (a) holds, there also exists the limit
$$
\lim_{\varepsilon \rightarrow 0^+}\int_{|x-y|>\varepsilon}R_{e^i}(x,y)\chi _{N_\beta}(x,y)g(y)dy,\quad \mbox{ for almost all }x\in\mathbb{R}^n. 
$$

Let $k\in \mathbb{N}$. We have that $|y|\leq \beta n+k$ provided that $|x|\leq k$ and $|x-y|\leq \beta n\min\{1,|x|^{-1}\}$. Then, for every $\varepsilon >0$,
$$
\int_{|x-y|>\varepsilon}R_{e^i}(x,y)\chi _{N_\beta}(x,y)f(y)dy=\int_{|x-y|>\varepsilon}R_{e^i}(x,y)\chi _{N_\beta}(x,y)\chi _{B(0,\beta n+k)}(y)f(y)dy,\quad |x|\leq k.
$$

Since $f\in L^p(\mathbb{R}^n,dx,X)$, $\chi_{B(0,\beta n+k)}f\in L^p(\mathbb{R}^n, \gamma_{-1},X)$ and then there exists
$$
\lim_{\varepsilon \rightarrow 0^+}\int_{|x-y|>\varepsilon}R_{e^i}(x,y)\chi _{N_\beta}(x,y)f(y)dy,\quad \mbox{ for almost all }x\in B(0,k). 
$$

Hence, we get that there exists 
$$
\lim_{\varepsilon \rightarrow 0^+}\int_{|x-y|>\varepsilon}R_{e^i}(x,y)\chi _{N_\beta}(x,y)f(y)dy,\quad \mbox{ for almost all }x\in\mathbb{R}^n. 
$$
We conclude that there exists 
$$
\lim_{\varepsilon \rightarrow 0^+}\int_{|x-y|>\varepsilon}\mathbb{R}_{e^i}(x-y)f(y)dy,\quad \mbox{ for almost all }x\in\mathbb{R}^n. 
$$

In a similar we can see that (a) holds provided that (b) is true. Note that $L^p(\mathbb{R}^n,\gamma_{-1},X)\subset L^p(\mathbb{R}^n,dx,X)$.

As it was proved in \cite{BrSj} the operator $S_i$ defined by
$$
S_i(f)(x)=\int_{\mathbb{R}^n}|R_{e^i}(x,y)|\chi _{N_\beta ^c}(x,y)f(y)dy,\quad x\in \mathbb{R}^n,
$$
is bounded from $L^1(\mathbb{R}^n, \gamma_{-1})$ into $L^{1,\infty}(\mathbb{R}^n, \gamma_{-1})$. Then, for every $f\in L^1(\mathbb{R}^n, \gamma_{-1})$ there exists 
$$
\lim_{\varepsilon \rightarrow 0^+}\int_{|x-y|>\varepsilon}R_{e^i}(x,y)\chi _{N_\beta}(x,y)f(y)dy=\int_{\mathbb{R}^n}R_{e^i}(x,y)\chi _{N_\beta}(x,y)f(y)dy,\quad \mbox{ for almost all }x\in\mathbb{R}^n. 
$$
The same arguments allow us to prove that (a) $\iff$ (b) when $p=1$.

By using a $n$-dimensional version of \cite[Theorem D]{TZ} we deduce that the properties $(i)$, $(ii)$ and $(iii)$ in Theorem \ref{Th1.5} are equivalent.

By proceeding as above we can see that the properties in (a) and (b) continue being equivalent when we replace the principal values by the maximal operators $R_{e^i,*}$ and $\mathbb{R}_{e^i,*}$ in (a) and (b), respectively. Then, the property (b) is equivalent to the property UMD for $X$ (see the comments before the proof of \cite[Theorem 1.10, p. 19]{HTV}). 

Thus the proof of Theorem \ref{Th1.5} is finished.

\section{UMD spaces and the imaginary powers of $\mathcal{A}$}
In this section we prove Theorem \ref{Th1.6}.

According to \cite[(11)]{BCFR} we have that, for every $f\in C_c^\infty (\mathbb{R}^n)$, 
$$
\Big(-\frac{\Delta }{2}\Big)^{i\gamma}f(x)=\lim_{\varepsilon \rightarrow 0^+}\Big(\alpha (\varepsilon )f(x)+\int_{|x-y|>\varepsilon }K_\gamma (x-y)f(y)dy\Big),\quad \mbox{ for a.e. }x\in \mathbb{R}^n,
$$
where
$$
K_\gamma (z)=-\int_0^\infty \phi_\gamma  (t)\partial _tW_t(z)dt,\quad z\in \mathbb{R}^n\setminus\{0\},
$$
$$
\alpha (\varepsilon )=\frac{1}{\Gamma (\frac{n}{2})}\int_0^\infty \phi_\gamma  \big(\frac{\varepsilon ^2}{4u}\big)e^{-u}u^{\frac{n}{2}-1}du,\quad \varepsilon \in (0,\infty ),
$$
and 
$$
\phi _\gamma (t)=\frac{t^{-i\gamma}}{\Gamma (1-i\gamma)},\quad t\in (0,\infty ).
$$
Note that the limit $\lim_{t\rightarrow 0^+}\phi_\gamma  (t)$ does not exist.

Let $f,g\in C_c^\infty (\mathbb{R}^n)$. By proceeding as in \cite[p. 213]{BCFR} we can see that
\begin{align*}
\langle \mathcal{A}^{i\gamma}f,g\rangle_{L^2(\mathbb{R}^n,\gamma_{-1})}&=\int_0^\infty \phi _\gamma (t)\Big(-\frac{d}{dt}\Big)\int_{\mathbb{R}^n}\int_{\mathbb{R}^n}T_t^\mathcal{A}(x,y)f(y)dy\overline{g(x)}\gamma_{-1}(x)dxdt\\
&=\int_0^\infty \phi _\gamma (t)\Big(-\frac{d}{dt}\Big)\int_{\mathbb{R}^n}\int_{\mathbb{R}^n}W_t(x-y)f(y)dy\overline{g(x)}\gamma_{-1}(x)dxdt\\
&\quad +\int_0^\infty \phi _\gamma (t)\Big(-\frac{d}{dt}\Big)\int_{\mathbb{R}^n}\int_{\mathbb{R}^n}(T_t^\mathcal{A}(x,y)-W_t(x-y))f(y)dy\overline{g(x)}\gamma_{-1}(x)dxdt.
\end{align*}

We have that 
\begin{align*}
\partial_tT_t^\mathcal{A}(x,y)&=\Big(-n-2e^{-t}\sum_{i=1}^ny_i(x_i-e^{-t}y_i)+\frac{2e^{-2t}|x-e^{-t}y|^2}{1-e^{-2t}}\Big)\frac{T_t^\mathcal{A}(x,y)}{1-e^{-2t}}\\
&=\Big(-n-2e^{-t}(\langle x,y\rangle-e^{-t}|y|^2)+\frac{2e^{-2t}|x-e^{-t}y|^2}{1-e^{-2t}}\Big)\frac{T_t^\mathcal{A}(x,y)}{1-e^{-2t}},\quad x,y\in \mathbb{R}^n,\;t>0,
\end{align*}
and writing $2e^{-t}\langle x,y\rangle=|x|^2 +e^{-2t}|y|^2-|x-e^{-t}y|^2$, we get
\begin{equation}\label{derivadaT}
\partial_tT_t^\mathcal{A}(x,y)=\Big(-n+e^{-2t}(|y|^2-|x|^2)- (1-e^{-2t})|x|^2+\frac{1+e^{-2t}}{1-e^{-2t}}|x-e^{-t}y|^2)\Big)\frac{T_t^\mathcal{A}(x,y)}{1-e^{-2t}},\quad x,y\in \mathbb{R}^n,\;t>0.
\end{equation}
Also, we have that
$$
\partial_tW_t(x-y)=\Big(-n+\frac{|x-y|^2}{t}\Big)\frac{W_t(x-y)}{2t},\quad x,y\in \mathbb{R}^n,\;t>0.
$$
We can write, for every $x,y\in \mathbb{R}^n$ and $t>0$,
\begin{align}\label{difbas}
\partial_tT_t^\mathcal{A}(x,y)-\partial_tW_t(x-y)&=\Big(-n+\frac{|x-y|^2}{t}\Big)\frac{T_t^\mathcal{A}(x,y)-W_t(x-y)}{2t}\nonumber \\
&\hspace{-3cm}+\left[-n\Big(\frac{1}{1-e^{-2t}}-\frac{1}{2t}\Big)+\frac{e^{-2t}}{1-e^{-2t}}(|y|^2-|x|^2)-|x|^2-\frac{|x-e^{-t}y|^2}{1-e^{-2t}}+2\Big(\frac{|x-e^{-t}y|^2}{(1-e^{-2t})^2}-\frac{|x-y|^2}{(2t)^2}\Big)\right]T_t^\mathcal{A}(x,y).
\end{align}

The derivative under the integral sign is justified and we get
\begin{align*}
\langle \mathcal{A}^{i\gamma}f,g\rangle_{L^2(\mathbb{R}^n,\gamma_{-1})}&=-\int_0^\infty \phi_\gamma (t)\int_{\mathbb{R}^n}\int_{\mathbb{R}^n}\partial _tW_t(x-y)f(y)dy\overline{g(x)}\gamma_{-1}(x)dxdt\\
&\quad -\int_0^\infty \phi_\gamma (t)\int_{\mathbb{R}^n}\int_{\mathbb{R}^n}\partial_t(T_t^\mathcal{A}(x,y)-W_t(x-y))f(y)dy\overline{g(x)}\gamma_{-1}(x)dxdt.
\end{align*}

To ensure the change on the order of integration we are going to see that
\begin{equation}\label{6.1}
J(x)=\int_0^\infty \int_{\mathbb{R}^n}\left|\partial_t(T_t^\mathcal{A}(x,y)-W_t(x-y))\right||f(y)|dydt<\infty, \quad x\in \mathbb{R}^n,
\end{equation}
and
\begin{equation}\label{6.2}
\int_0^\infty  \int_{\mathbb{R}^n}\int_{\mathbb{R}^n}\left|\partial _t(T_t^\mathcal{A}(x,y)-W_t(x-y))\right||f(y)||g(x)|\gamma_{-1}(x)dydxdt<\infty .
\end{equation}

We observe that, since $g\in C_c^\infty (\mathbb{R}^n)$, it is sufficient to see that $J(x)\leq h(x)$, $x\in \mathbb{R}^n$, for certain continuous function $h$.

We consider the decomposition
$$
J(x)=\left(\int_0^{m(x)}+\int_{m(x)}^\infty\right)\int_{\mathbb{R}^n}\left|\partial_t(T_t^\mathcal{A}(x,y)-W_t(x-y))\right||f(y)|dydt=J_1(x)+J_2(x),\quad x\in \mathbb{R}^n.
$$

Since $f\in C_c^\infty (\mathbb{R}^n)$ we have that
$$
J_2(x)\leq C(1+|x|)^2\int_{m(x)}^\infty \int_{{\rm supp }f}\frac{1}{t^{\frac{n}{2}+1}}dtdy\leq C\frac{(1+|x|)^2}{m(x)^{\frac{n}{2}}},\quad x\in \mathbb{R}^n.
$$
We now estimate $J_1(x)$, $x\in \mathbb{R}^n$. We take into account that, if $\supp f\subset B(0,r_f)$, with $r_f>0$, then
\begin{align*}
|x-e^{-t}y|^2&\geq |x-y|^2+(1-e^{-t})^2|y|^2-2|x-y||y|(1-e^{-t})\nonumber\\
&\geq |x-y|^2-d(x)(1-e^{-t}), \quad x\in \mathbb{R}^n,\;y\in {\rm supp }f,\; t>0,
\end{align*}
where $d(x)=2r_f(r_f+|x|)$, $x\in \mathbb{R}^n$. By considering the decomposition \eqref{decomposition} for $\alpha =0$, the estimations \eqref{differences} and the proof of \eqref{Halpha} we can see that
$$
|T_t^\mathcal{A}(x,y)-W_t(x-y)|\leq Ce^{d(x)}e^{-c\frac{|x-y|^2}{t}}\Big(\frac{1}{t^{\frac{n}{2}-1}}+(|y|+|x-y|)^n\Big),\quad x\in \mathbb{R}^n,\;\;y\in {\rm supp }f,\;t\in (0,1).
$$
Then, according to \eqref{difbas} we have that
\begin{align*}
|\partial_t(T_t^\mathcal{A}(x,y)-W_t(x-y))|&\leq Ce^{d(x)}e^{-c\frac{|x-y|^2}{t}}\left(\frac{1+|x|^2}{t^{\frac{n}{2}}}+\frac{(|y|+|x-y|)^n}{t}+\frac{|x-y||y+x|}{t^{\frac{n}{2}+1}}+\frac{|y|+ |x-y|}{t^{\frac{n}{2}}}\right)\\
&\leq Ce^{d(x)}e^{-c\frac{|x-y|^2}{t}}\left(\frac{1+|x|^2}{t^{\frac{n}{2}}}+\frac{(1+|x|)^n}{t}+\frac{1+|x|}{t^{\frac{n+1}{2}}}\right),\quad x\in \mathbb{R}^n,\;\;y\in {\rm supp }f,\;t\in (0,1).
\end{align*}
Since $m(x)\sim (1+|x|)^{-2}$, $x\in \mathbb{R}^n$, we get when $t\in (0,m(x))$,
$$
|\partial_t(T_t^\mathcal{A}(x,y)-W_t(x-y))|\leq Ce^{d(x)}e^{-c\frac{|x-y|^2}{t}}\left(\frac{1+|x|}{t^{\frac{n+1}{2}}}+\frac{(1+|x|)^n}{t}\right),\quad x\in \mathbb{R}^n,\;\;y\in {\rm supp }f.	 
$$
We deduce that
\begin{align*}
J_1(x)&\leq Ce^{d(x)}\int_0^{m(x)}\int_{\supp f}\left(\frac{1+|x|}{t^{\frac{3}{4}}}+(1+|x|)^nt^{\frac{n}{2}-\frac{5}{4}}\right)\frac{dy}{|x-y|^{n-\frac{1}{2}}}\\
&\leq Ce^{d(x)}\Big((1+|x|)m(x)^{\frac{1}{4}}+(1+|x|)^nm(x)^{\frac{n}{2}-\frac{1}{4}}\Big)\leq Ce^{d(x)}\sqrt{1+|x|},\quad x\in \mathbb{R}^n.
\end{align*}
Then, $|J(x)|\leq C((1+|x|)^{n+2}+e^{d(x)}\sqrt{1+|x|})$, $x\in \mathbb{R}^n$ and \eqref{6.1} and \eqref{6.2} are established.

Note that the estimations that we have just proved are depending on $f$.

By interchanging the order of integration we get
$$
\langle \mathcal{A}^{i\gamma}f,g\rangle _{L^2(\mathbb{R}^n,\gamma_{-1})}=\langle \big(-\frac{\Delta}{2}\big)^{i\gamma}f,g\rangle_{L^2(\mathbb{R}^n,\gamma_{-1})}+\int_{\mathbb{R}^n}\int_{\mathbb{R}^n}f(y)\int_0^\infty \phi _\gamma (t) \Big(\partial_tT_t^\mathcal{A}(x,y)-\partial_tW_t(x-y)\Big)dtdyg(x)\gamma_{-1}(x)dx.
$$
It follows that
\begin{align*}
\mathcal{A}^{i\gamma}(f)(x)&=\big(-\frac{\Delta}{2}\big)^{i\gamma}f(x)-\int_{\mathbb{R}^n}f(y)\int_0^\infty \phi _\gamma (t) \Big(\partial_tT_t^\mathcal{A}(x,y)-\partial_tW_t(x-y)\Big)dtdy\\
&=\big(-\frac{\Delta}{2}\big)^{i\gamma}f(x)-\lim_{\varepsilon \rightarrow 0^+}\int_{|x-y|>\varepsilon}f(y)\int_0^\infty \phi _\gamma (t) \Big(\partial _tT_t^\mathcal{A}(x,y)-\partial_tW_t(x-y)\Big)dtdy,
\end{align*}
for almost all $x\in \mathbb{R}^n$.

We conclude that 
$$
\mathcal{A}^{i\gamma}(f)(x)=\lim_{\varepsilon \rightarrow 0^+}\Big(\int_{|x-y|>\varepsilon}K_\gamma ^\mathcal{A}(x,y)f(y)dy+\alpha (\varepsilon )f(x)\Big),\quad\mbox{for almost all }x\in \mathbb{R}^n,
$$
where
$$
K_\gamma ^\mathcal{A}(x,y)=-\int_0^\infty \phi _\gamma (t)\partial_tT_t^\mathcal{A}(x,y)dt,\quad x,y\in \mathbb{R}^n,\;t>0.
$$

Salogni (\cite[Theorem 3.4.3]{Sa}) proved that $\mathcal{A}^{i\gamma}$ is bounded from $L^p(\mathbb{R}^n,\gamma_{-1})$ into itself, for every $1<p<\infty$, and Bruno (\cite[Theorem 4.1, (i)]{Br}) established that $\mathcal{A}^{i\gamma}$ is bounded from $L^1(\mathbb{R}^n,\gamma_{-1})$ into $L^{1,\infty}(\mathbb{R}^n,\gamma_{-1})$. In order to extend $\mathcal{A}^{i\gamma}$ to functions taking values in a Banach space we need to prove these results in a different way by making a comparation between $\mathcal{A}^{i\gamma}$ and $(-\frac{\Delta}{2})^{i\gamma}$.

Let $\beta >0$. We define the local and global part as follows
$$
\mathcal{A}^{i\gamma}_{\rm loc}(f)(x)=\lim_{\varepsilon \rightarrow 0^+}\Big(\alpha (\varepsilon )f(x)+\int_{|x-y|>\varepsilon}K_\gamma ^\mathcal{A}(x,y)\chi_{N_\beta}(x,y)f(y)dy\Big),\quad x\in \mathbb{R}^n,
$$
and
$$
\mathcal{A}^{i\gamma}_{\rm glob}(f)(x)=\int_{|x-y|>\varepsilon}K_\gamma ^\mathcal{A}(x,y)\chi_{N_\beta^c}(x,y)f(y)dy,\quad x\in \mathbb{R}^n.
$$

The operators $(-\frac{\Delta}{2})_{\rm loc}^{i\gamma}$ and $(-\frac{\Delta}{2})_{\rm glob}^{i\gamma}$ are defined in analogous way.

We are going to see that
\begin{equation}\label{6.4}
|K_\gamma ^\mathcal{A}(x,y)-K_\gamma (x-y)|\leq C\frac{\sqrt{1+|x|}}{|x-y|^{n-\frac{1}{2}}},\quad (x,y)\in N_\beta.
\end{equation}

We can write, 
\begin{align*}
|K_\gamma ^\mathcal{A}(x,y)-K_\gamma (x-y)|&\leq C\left(\int_0^{m(x)}|\partial_t(T_t^\mathcal{A}(x,y)-W_t(x-y))|dt\right.\\
&\left.+\int_{m(x)}^\infty|\partial_tT_t^\mathcal{A}(x,y)|dt+\int_{m(x)}^\infty|\partial_tW_t(x-y)|dt\right)=\sum_{i=1}^3I_i(x,y),\quad x,y\in \mathbb{R}^n.
\end{align*}

First we observe that
$$
I_3(x,y)\leq C\int_{m(x)}^\infty \frac{e^{-c\frac{|x-y|^2}{t}}}{t^{\frac{n}{2}+1}}dt\leq \frac{C}{m(x)^{\frac{n}{2}}}\leq C(1+|x|)^n\leq C\frac{\sqrt{1+|x|}}{|x-y|^{n-\frac{1}{2}}}, \quad (x,y)\in N_\beta.
$$
By using \eqref{derivadaT} and since $||y|^2-|x|^2|\leq |x-y||x+y|\leq C$ when $(x,y)\in N_\beta$, we have that
$$
I_2(x,y)\leq C(1+|x|^2)\int_{m(x)}^\infty \frac{e^{-\frac{n}{2}t}}{t^{\frac{n}{2}+1}}dt\leq C\frac{(1+|x|^2)}{m(x)^{\frac{n}{2}+1}}\leq C(1+|x|)^n\leq C\frac{\sqrt{1+|x|}}{|x-y|^{n-\frac{1}{2}}}, \quad (x,y)\in N_\beta.
$$

Finally, from \eqref{difbas}, proceeding as above for the estimation of $J_1$, but now by taking into account that $(x,y)\in N_\beta$, we obtain that
\begin{align*}
I_1(x,y)&\leq C\int_0^{m(x)}e^{-c\frac{|x-y|^2}{t}}\left(\frac{1+|x|}{t^{\frac{n+1}{2}}}+\frac{(1+|x|)^n}{t}\right)dt\leq C\frac{(1+|x|)m(x)^{\frac{1}{4}}+(1+|x|)^nm(x)^{\frac{n}{2}-\frac{1}{4}}}{|x-y|^{n-\frac{1}{2}}}\\
&\leq  C\frac{\sqrt{1+|x|}}{|x-y|^{n-\frac{1}{2}}},\quad (x,y)\in N_\beta.
\end{align*}
Thus, \eqref{6.4} is proved.

As it was established in Section 3.2,
$$
\sup_{x\in \mathbb{R}^n}\int_{\mathbb{R}^n}\frac{\sqrt{1+|x|}}{|x-y|^{n-\frac{1}{2}}}\chi_{N_\beta}(x,y)dy<\infty\quad \mbox{ and }\quad \sup_{y\in \mathbb{R}^n}\int_{\mathbb{R}^n}\frac{\sqrt{1+|x|}}{|x-y|^{n-\frac{1}{2}}}\chi_{N_\beta}(x,y)dx<\infty.
$$

Then, the operator $L_\beta$ defined by
$$
L_\beta (f)(x)=\int_{\mathbb{R}^n}|K_\gamma ^\mathcal{A}(x,y)-K_\gamma (x,y)|\chi_{N_\beta}(x,y)f(y)dy,\quad x\in \mathbb{R}^n,
$$
is bounded from $L^p(\mathbb{R}^n,dx)$ into itself for every $1\leq p<\infty$. From \cite[Proposition 3.2.5]{Sa} it follows that $L_\beta$ is bounded from $L^p(\mathbb{R}^n,\gamma_{-1})$ into itself, for every $1\leq p<\infty$.

Note that
$$
|\mathcal{A}^{i\gamma}_{\rm loc}(f)-\big(-\frac{\Delta}{2}\big)^{i\gamma}_{\rm loc}(f)|\leq CL_\beta (|f|).
$$

We now study the global operator $\mathcal{A}^{i\gamma}_{\rm glob}$. We recall that $\mathcal{A}^{i\gamma}_{\rm glob}$ is the integral operator defined by
$$
\mathcal{A}^{i\gamma}_{\rm glob}(f)(x)=-\int_{\mathbb{R}^n}\int_0^\infty \phi _\gamma (t)\partial_tT_t^\mathcal{A}(x,y)dt\chi_{N_\beta ^c}(x,y)f(y)dy,\quad x\in \mathbb{R}^n.
$$
We have, by making $r=e^{-t}$, $t\in (0,\infty)$, that
\begin{align*}
\int_0^\infty |\phi _\gamma (t)|\partial_tT_t^\mathcal{A}(x,y)|dt&\leq C\int_0^1 \Big|\partial_r\Big[r^n\frac{e^{-\frac{|x-ry|^2}{1-r^2}}}{(1-r^2)^{\frac{n}{2}}}\Big]\Big|dr\\
&\hspace{-2.5cm}\leq C\int_0^1\left|\frac{n(1-r^2) +2r(1-r^2)\sum_{i=1}^ny_i(x_i-ry_i)-2r^2|x-ry|^2}{(1-r^2)^2}\right|r^{n-1}\frac{e^{-\frac{|x-ry|^2}{1-r^2}}}{(1-r^2)^{\frac{n}{2}}}dr\\
&\hspace{-2.5cm}=C\int_0^1\left|\frac{(1-r^2)(n+|x|^2-r^2|y|^2)-(1+r^2)|x-ry|^2}{(1-r^2)^2}\right|r^{n-1}\frac{e^{-\frac{|x-ry|^2}{1-r^2}}}{(1-r^2)^{\frac{n}{2}}}dr.
\end{align*}

For every $x,y\in \mathbb{R}^n$ there exists a polynomial $P_{x,y}$ with degree 4 and a positive function $Q_{x,y}$ such that
$$
\partial_r\Big[r^n\frac{e^{-\frac{|x-ry|^2}{1-r^2}}}{(1-r^2)^{\frac{n}{2}}}\Big]=P_{x,y}(r)Q_{x,y}(r),\quad r\in (0,1).
$$
Then, for every $x,y\in \mathbb{R}^n$, the function 
$$
\partial_r\left[r^n\frac{e^{-\frac{|x-ry|^2}{1-r^2}}}{(1-r^2)^{\frac{n}{2}}}\right]
$$
changes the sign at most four times in $(0,1)$. We deduce that
$$
\int_0^\infty |\phi_\gamma (t)|\big|\partial _tT_t^\mathcal{A}(x,y)\big|dt\leq C\int_0^1\Big|\partial_ r\Big[r^n\frac{e^{-\frac{|x-ry|^2}{1-r^2}}}{(1-r^2)^{\frac{n}{2}}}\Big]\Big|dr\leq C\sup_{r\in (0,1)}r^n\frac{e^{-\frac{|x-ry|^2}{1-r^2}}}{(1-r^2)^{\frac{n}{2}}},\quad x,y\in \mathbb{R}^n.
$$

If $(u,v)\in N_\sigma $, with $\sigma >0$, we have that
$$
\frac{1}{(1+\sigma n)(1+|v|)}\leq \frac{1}{1+|u|}\leq \frac{1+\sigma n}{1+|v|}.
$$
Also, $\frac{1}{2}\min\{1,|x|^{-1}\}\leq (1+|x|)^{-1}\leq \min\{1,|x|^{-1}\}$, $x\in \mathbb{R}^n$. Then, if $(y,x)\in N$, then $(x,y)\in N_{2(1+n)}$.

We take $\beta=2(1+n)$. Since $(y,x)\not\in N$ when $(x,y)\not \in N_\beta$, according to \cite[Proposition 2.1]{MPS} we get, for $(x,y)\not\in N_\beta$,
$$
\sup_{r\in (0,1)}\frac{e^{-\frac{|x-ry|^2}{1-r^2}}}{(1-r^2)^{\frac{n}{2}}}\leq C\left\{\begin{array}{ll}
e^{-|x|^2},&\langle x,y\rangle \leq 0,\\[0.3cm]
\displaystyle \Big(\frac{|x+y|}{|x-y|}\Big)^{\frac{n}{2}}\exp\Big(\frac{|y|^2-|x|^2}{2}-\frac{|x-y||x+y|}{2}\Big),&\langle x,y\rangle >0.
\end{array}
\right.
$$

By proceeding as in the proof of Theorem \ref{Th1.2} we can see that, for every $1<p<\infty$,
$$
\sup_{x\in \mathbb{R}^n}\int_{\mathbb{R}^n}e^{\frac{|x|^2-|y|^2}{p}}\chi _{N_\beta ^c}(x,y)\int_0^\infty |\phi _\gamma (t)|\Big|\partial_tT_t^\mathcal{A}(x,y)\Big|dtdy<\infty,
$$
and
$$
\sup_{y\in \mathbb{R}^n}\int_{\mathbb{R}^n}e^{\frac{|x|^2-|y|^2}{p}}\chi _{N_\beta ^c}(x,y)\int_0^\infty |\phi _\gamma (t)|\Big|\partial_tT_t^\mathcal{A}(x,y)\Big|dtdy<\infty .
$$

Hence the operator $\mathcal{L}_\beta$ defined by 
$$
\mathcal{L}_\beta (f)(x)=\int_{\mathbb{R}^n}f(y)\chi _{N_\beta ^c}(x,y)\int_0^\infty |\phi _\gamma (t)|\Big|\partial_tT_t^\mathcal{A}(x,y)\Big|dtdy,\quad x\in \mathbb{R}^n,
$$
is bounded from $L^p(\mathbb{R}^n,\gamma_{-1})$ into itself, for every $1<p<\infty$.

On the other hand, according to \cite[Lemma 3.3.3]{Sa}
$$
\sup_{t>0}T_t^\mathcal{A}(x,y)\leq Ce^{-|x|^2}\min\{(1+|x|)^n, (|x|\sin \theta (x,y))^{-n}\},\quad (x,y)\in N_\beta ^c,\;x,y\not=0.
$$
We recall that $\theta (x,y)\in [0,\pi)$ represents the angle between the nonzero vectors $x$ and $y$ when $n>1$ and $\theta (x,y)=0$, $x,y\in \mathbb{R}^n$, when $n=1$. By \cite[Lemma 3.3.4]{Sa} the operator $\mathcal{L}_\beta $ is bounded from $L^1(\mathbb{R}^n,\gamma _{-1})$ into $L^{1,\infty}(\mathbb{R}^n,\gamma_{-1})$.

Note that $|\mathcal{A}_{\rm glob}^{i\gamma}(f)|\leq \mathcal{L}_\beta(|f|)$. 

By taking into account that $(-\frac{\Delta}{2})^{i\gamma}$ is a Calder\'on-Zygmund operator, the arguments developed in the proofs of Theorems \ref{Th1.2} and \ref{Th1.3} allow us to finish the proof of this theorem.
\bibliographystyle{acm}

\end{document}